\documentclass[11pt,a4paper,reqno]{amsart} 
\usepackage[english]{babel}
\usepackage[utf8]{inputenc}
\usepackage{graphicx,amssymb,latexsym,amsmath,amscd,amsthm}
\usepackage{verbatim,hyperref, color}
\usepackage{cite} 
\usepackage[margin=1.3in]{geometry} 
\newtheorem{lemma}{Lemma}[section]

\usepackage{slashbox}
\newtheorem{prop}[lemma]{Proposition} 
\newtheorem{thm}[lemma]{Theorem}
\newtheorem{cor}[lemma]{Corollary} 
\theoremstyle{definition} 
\newtheorem{Def}[lemma]{Definition}
  \theoremstyle{remark} 
\newtheorem{rem}[lemma]{Remark} 
\newtheorem{rems}[lemma]{Remarks}  
\newcommand{\F}{\mathcal{F}}
\newcommand{\G}{\mathcal{G}}

\newcommand{\D}{\mathcal{D}}
  
\newcommand{\N}{\mathbb{N}}
\newcommand{\M}{\mathcal{M}}

\newenvironment{customthm}[1]
{\innercustomthm}
{\endinnercustomthm}
\title{Subshifts of Finite Type with a Hole} 
\author[Haritha C]{Haritha Cheriyath}
\address{Department of Mathematics\\
Indian Institute of Science Education and Research Bhopal\\
Bhopal Bypass Road, Bhauri \\
Bhopal 462 066, Madhya Pradesh\\
India}
\email{harithacheriyath@gmail.com}
\author[Nikita Agarwal]{Nikita Agarwal}
\address{Department of Mathematics\\
Indian Institute of Science Education and Research Bhopal\\
Bhopal Bypass Road, Bhauri \\
Bhopal 462 066, Madhya Pradesh\\
India}
\email{nagarwal@iiserb.ac.in}


\begin{document} 
\maketitle
\begin{abstract} 
	We consider a subshift of finite type on $q$ symbols with a union of $t$ cylinders based at words of identical length $p$ as the hole. We explore the relationship between escape rate into the hole and a rational function $r(z)$, of correlations between forbidden words in the subshift with the hole. In particular, we prove that there exists a constant $D(t,p)$ such that if $q>D(t,p)$, then escape rate is faster into the hole when the value of the corresponding rational function $r(z)$ evaluated at $D(t,p)$ is larger. Further, we consider holes which are unions of cylinders based at words of identical length, having zero cross-correlations, and prove that the escape rate is faster into the hole with larger Poincar\'e recurrence time. Our results are more general than the existing ones known for maps conjugate to a full shift with a single cylinder as the hole. 
\end{abstract}

\section{Introduction} \label{sec:intro}
Open dynamical systems or dynamical systems with a hole are of interest because of their dynamical properties and also their applications, we refer to~\cite{BB, BY,DemYoung,FS,Combinatorial}. In~\cite{PY}, Pianigiani and Yorke introduced such systems and also discussed the issue of the rate of the escape of orbits into the hole (known as the \textit{escape rate}). A natural question in this context is whether and how the escape rate depends on the position and the size of the hole. This question was first addressed by Bunimovich and Yurchenko~\cite{BY}. Their study focuses on full shifts on finitely many symbols with a single cylinder as a hole. They prove that the escape is the fastest through the hole whose minimal period is maximum. Their results are applicable and restricted to all maps conjugate to a full shift with a single cylinder as the hole. Similar results are not known to exist even for simple systems such as expansive Markov maps. Froyland and Stancevic~\cite{FS} and Haritha and Agarwal~\cite{Product} present exploratory numerical examples for other shift spaces such as subshifts of finite types. However, these papers do not provide general results.

When the underlying state space is a subshift of finite type, the problem of computing the escape rate into a Markov hole, which is a finite union of cylinders, reduces to computing $f(n)$, the number of words of length $n$, which do not contain any of the words from a given finite collection of words with symbols from a fixed set of symbols, see~\cite{BY,Product}. The problem of computing $f(n)$ has been extensively studied in the literature. This problem is interesting in its own right and has applications to probability theory and combinatorics, see~\cite{Combinatorial,HXYC,String}. There is no explicit formula for $f(n)$, thus often, one studies its generating function $F(z)$. Guibas and Odlyzko in~\cite{Guibas,Combinatorial} gave an explicit combinatorial method (see Theorem~\ref{thm:formF}) to compute the generating function $F(z)$, using a system of linear equations involving $q$, the size of the symbol set, and $r(z)$, a rational function of the correlation polynomials between the words in the collection. The correlation polynomials capture overlapping between the words. The function $F(z)$ is rational and its special form is helpful to understand the asymptotic behavior of $f(n)$. This explicit form of the generating function $F(z)$ was used in~\cite{BY} to study the escape rate of the shift map on the full shift on two symbols into a Markov hole (a single cylinder).  

In this paper, we consider a general class of maps, those conjugate to an irreducible subshift of finite type, with finite union of cylinders based at words of identical length as holes. We extend and generalize the results of~\cite{BY,Product} for this set-up. The results in these works follow as a special case of the results in our paper. Unlike the case of~\cite{BY}, when the hole corresponds to a union of cylinders, that is, when there is more than one forbidden word, the cross-correlations between each pair of forbidden words appear in the generating function, which makes the analysis harder. For a full shift on $q$ symbols with a union of $t$ cylinders based at words each of length $p$ as the hole, we prove that there is a constant $D(t,p)$ such that for all $q>D(t,p)$, the escape rate is faster into the hole when the value of the corresponding rational function $r(z)$ evaluated at $D(t,p)$ is larger. In particular, when the hole is a union of two cylinders, $D(t,p)=3p^2+2$. These results extend smoothly to subshifts of finite type with a finite union of cylinders as a hole.  

We also explore the relationship between the escape rate into the hole and the minimal period of the hole (which is the same as the Poincar\'e recurrence time if the cross-correlations between words at which the cylinders describing the hole are based are zero; see Remark~\ref{rem:prt_mp}). In Theorems~\ref{thm:esc_min1} and ~\ref{thm:gen-period}, we consider two collections with the same number of words, each of equal length, having zero cross-correlations, and prove that the collection with larger minimal period has the larger escape rate. We give counter-examples in Remark~\ref{rem:cross-nonzero} discussing the violation of this result when the cross-correlations are non-zero. 

\subsection{Organization of the paper} 
In Section~\ref{sec:prelim}, we present some preliminaries on subshift of finite type, the relationship between the escape rate into a hole (which is a union of cylinders) and the topological entropy, and the form of the corresponding generating function in terms of the size of the symbol set $q$ and the associated correlation polynomials. We state the main results of this paper in Section~\ref{sec:main-results}, the proofs of which are presented in later sections. In Section~\ref{sec:full_two}, we consider a full shift as the underlying space and compare the escape rate into two holes which are unions of two cylinders based at words of equal length $p$. Theorems~\ref{thm:esc_r_1} and~\ref{thm:esc_r_2} give the relationship between the escape rate and the corresponding rational function $r(z)$ with certain assumptions on $p$ and $q$. Further, in Theorem~\ref{thm:esc_min1}, we discuss the relationship between the minimal period of the hole and the escape rate. In Section~\ref{sec:full_morethantwo}, we generalize the results obtained in Section~\ref{sec:full_two} when holes are unions of more than two cylinders. Theorems~\ref{thm:gen} and~\ref{thm:gen-period} are generalizations of Theorems~\ref{thm:esc_r_2} and~\ref{thm:esc_min1}, respectively. In Section~\ref{sec:subshift}, we discuss applications of the results obtained in Sections~\ref{sec:full_two} and~\ref{sec:full_morethantwo} to the case when the underlying space is an irreducible subshift of finite type. We give concluding remarks in Section~\ref{sec:conc}. 

\section{Preliminaries} \label{sec:prelim}
In this section, we present the concepts, notations and definitions that we use throughout the paper.

\subsection{Subshifts of finite type}
Let $\Sigma=\{0,1,\ldots,q-1\}$ be a set of symbols with $q\geq 2$. We denote by $\Sigma^\mathbb{N}$, the set of all one-sided sequences with symbols from $\Sigma$. It is known as a \textit{one-sided full shift on $q$ symbols}. Let $\F$ be a finite collection of words with symbols from $\Sigma$. The collection of all sequences from $\Sigma^\mathbb{N}$ which do not contain any of the words from the collection $\F$ as subwords is denoted by $\Sigma_{\F}$. This is known as a \textit{$(p-1)$-step subshift} of finite type if the longest word in $\F$ has length $p\ge 1$. Without loss of generality, it can be assumed that each word in the collection $\F$ has length $p$. The words in the collection $\F$ are called \textit{forbidden}. An \textit{allowed word} in $\Sigma_\F$ is a word which appears as a subword in some sequence in $\Sigma_\F$. Let $\sigma:\Sigma_\F\to\Sigma_\F$ be the \textit{left shift map}.

A zero-step subshift is a full shift on smaller set of symbols. Every $(p-1)$-step subshift is conjugate to a one-step subshift via a block conjugacy map, see~\cite{LM_book}.

An adjacency matrix of a one-step shift on $q$ symbols is a binary matrix $A$ of size $q$ where the $ij^{th}$ entry is 1 if and only if $ij$ is an allowed word. If $A$ is an irreducible matrix, that is, for every $i,j$, there exists $n\ge 1$ such that $A^n_{i,j}>0$, we say that the subshift is \textit{irreducible}. It is said to be \textit{reducible} if it is not irreducible. The subshift is said to be \textit{primitive} if its adjacency matrix $A$ is primitive, that is, there exists $n\ge 1$ such that $A^n>0$. 

A $(p-1)$-step subshift $X$ is said to be \textit{irreducible (primitive)} if the one-step subshift $Y$ to which it is conjugate to is irreducible (primitive). Let $A$ be the adjacency matrix of $Y$. For convenience, the adjacency matrix of $X$ is also defined as $A$.

Let $\Sigma_\F$ be an irreducible subshift of finite type. If $f(n)$ denotes the number of words of length $n$ which appear as subwords in sequences in $\Sigma_{\F}$, then the \textit{topological entropy} of the subshift is given by
$	h_{\text{top}}(\Sigma_{\F}) = \lim_{n \to \infty}\ln\left(f(n)^{1/n}\right)=
\ln(\theta),$
where $\theta$ is the largest (in modulus) eigenvalue of its adjacency matrix, which is real and positive by the Perron-Frobenius theorem, and is known as the \textit{Perron root}.

\subsection{The Parry measure}
William Parry showed the existence and uniqueness of a $\sigma$-invariant measure of maximal entropy for irreducible subshifts of finite type, see~\cite{Parry64}. This measure is now known as the Parry measure, which we now describe.

Consider an irreducible one-step subshift $\Sigma_\F$ on $q$ symbols.  Let $w=i_1i_2\ldots i_n$ be an allowed word in $\Sigma_{\F}$, and let 
\[
C_w=\{x_1x_2\ldots\in\Sigma_{\F}\ \vert\ x_1=i_1,x_2=i_2,\ldots, x_n=i_n\},
\]
be the \textit{cylinder based at the word $w$}, which is the collection of all sequences in $\Sigma_\F$ which begin with the word $w$. Then we obtain a probability measure space with set $\Sigma_{\F}$, $\sigma$-algebra generated by the cylinders based at all allowed words in $\Sigma_\F$ of finite length, and the measure $\mu$ defined as follows: for every allowed word $w=i_1i_2\ldots i_n$, 
\[
\mu(C_w)=\dfrac{u_{i_1}v_{i_n}}{\theta^{n-1}},
\]
where $\theta$ is the Perron root of the adjacency matrix of $\Sigma_\F$, $v = (v_0,\ldots,v_{q-1})^T$, and $u = (u_0,\ldots,u_{q-1})$,
are the normalized right and left eigenvectors with respect to $\theta$ such that $u^Tv=1$. The measure $\mu$ is known as the \textit{Parry measure}. This probability measure is invariant and ergodic with respect to the shift map $\sigma$ on $\Sigma_\F$ and is the unique measure that maximizes the measure theoretic entropy according to the variational principle. Note that there is a large class of $\sigma$-invariant measures, known as Markov measures, on subshifts of finite type. The Parry measure is a special Markov measure, see~\cite{Parry64}.

In the case of full shift ($\F=\emptyset$), $\mu(C_w)=1/q^n$, for all words $w$ of length $n$. If $\F\ne\emptyset$, it is immediate from the definition of $\mu$ that two cylinders based at words of identical length need not have the same measure.
On any subshift of finite type, we define the Parry measure using the conjugacy between the subshift and the one-step shift. 

\subsection{Escape rate}

Consider an irreducible subshift of finite type $\Sigma_\F$ with Parry measure $\mu$. Let $\G$ be a finite collection of allowed words in $\Sigma_\F$. Let $H_\G=\bigcup_{w\in\G}C_w$, referred to as a \textit{hole}, denote the union of cylinders in $\Sigma_\F$ based at words from $\G$. The \textit{escape rate of the shift map $\sigma\vert_{\Sigma_\F}$ into the hole $H_\G$} is defined as
\[
\rho(\G;\Sigma_{\F}) := -\lim_{n\rightarrow \infty} \dfrac{1}{n} \ln\mu\left(\mathcal{W}_n(\G)\right),
\]
where $\mathcal{W}_n(\G)$ denotes the collection of all sequences in $\Sigma_{\F}$ which do not contain words from $\G$ as subwords in their first $n$ positions. In the case of a full shift, that is, when $\F=\emptyset$, we use the shorthand $\rho(\G)$ for $\rho(\G;\Sigma_{\F})$. 

The notion of escape rate is quite general but we deal with subshifts of finite type in this paper, hence we restrict the definition to this setting only. Also the escape rate may not exist in general but in the setting described in the above definition, it does exist. It can be expressed in terms of the topological entropies of the subshifts $\Sigma_\F$ and $\Sigma_{\F\cup\G}$.

\begin{thm}~\cite[Theorem 3.1]{Product}\label{thm:esc_rate}
	The escape rate $\rho(\G;\Sigma_{\F})$ can be expressed as
	\[
	\rho(\G;\Sigma_{\F})= h_{\text{top}}(\Sigma_{\F}) - h_{\text{top}}(\Sigma_{\F\cup \G}).
	\]
\end{thm}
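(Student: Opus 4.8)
The plan is to compute $\mu(\mathcal{W}_n(\G))$ explicitly as a sum of Parry measures of cylinders and then extract its exponential decay rate. Since both the escape rate and the topological entropy are invariant under the block conjugacy, I would first reduce to the case where $\Sigma_\F$ is an irreducible one-step subshift, so that the Parry measure is given directly by $\mu(C_w)=u_{i_1}v_{i_n}/\theta^{n-1}$ for $w=i_1\cdots i_n$, where $\theta$ is the Perron root and $u,v$ are the left and right eigenvectors, which are strictly positive by the Perron--Frobenius theorem and irreducibility. The first real step is to observe that $\mathcal{W}_n(\G)$ is a \emph{disjoint} union of cylinders $C_w$, where $w=i_1\cdots i_n$ ranges over the collection $L_n$ of length-$n$ words that are allowed in $\Sigma_\F$ and contain no word of $\G$ as a subword. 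Using disjointness and the Parry formula,
\[
\mu(\mathcal{W}_n(\G)) = \sum_{w\in L_n}\mu(C_w) = \frac{1}{\theta^{n-1}}\sum_{w\in L_n} u_{i_1}v_{i_n}.
\]

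Next I would sandwich this sum. Let $0<m\le M$ denote the minimum and maximum of the finitely many positive numbers $\{u_iv_j\}$. Then $m\,|L_n|\le \theta^{n-1}\mu(\mathcal{W}_n(\G))\le M\,|L_n|$, so the eigenvector entries contribute only a bounded multiplicative factor. Applying $-\frac1n\ln(\cdot)$ and letting $n\to\infty$, the constants $m,M$ and the single extra factor of $\theta$ (coming from $\theta^{n-1}$ versus $\theta^n$) all vanish in the limit, yielding
\[
\rho(\G;\Sigma_\F)= \ln\theta-\lim_{n\to\infty}\frac1n\ln|L_n| = h_{\text{top}}(\Sigma_\F)-\lim_{n\to\infty}\frac1n\ln|L_n|.
\]
It then remains only to identify $\lim_{n\to\infty}\frac1n\ln|L_n|$ with $h_{\text{top}}(\Sigma_{\F\cup\G})$. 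By construction $L_n$ is exactly the set of length-$n$ words avoiding the forbidden collection $\F\cup\G$, since being allowed in $\Sigma_\F$ is the same as avoiding $\F$.

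The hard part will be this last identification: controlling the distinction between ``words avoiding $\F\cup\G$'' and ``words that actually occur in points of $\Sigma_{\F\cup\G}$,'' which is what the entropy-defining count $f(n)$ records. Every word in the language of $\Sigma_{\F\cup\G}$ lies in $L_n$, so one inequality is immediate; for the reverse I would invoke the standard fact that for a subshift defined by finitely many forbidden words the two counts differ only by sub-exponential (boundary/extendability) factors and hence share the same exponential growth rate. Care is needed because adjoining $\G$ to the forbidden set may destroy irreducibility, so $\Sigma_{\F\cup\G}$ can carry transient words; nevertheless the growth rate is governed by the spectral radius of its adjacency matrix and the two counts agree to exponential order, giving $h_{\text{top}}(\Sigma_{\F\cup\G})$. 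A secondary technical point is making the one-step reduction rigorous, namely checking that under the recoding $L_n$ corresponds to the analogous collection for the conjugate one-step shift with only a bounded shift in word length, so that neither limit is affected; this completes the proof.
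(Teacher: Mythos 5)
The paper does not prove this statement --- it is imported verbatim from \cite[Theorem 3.1]{Product} --- so there is no internal proof to compare against; judged on its own, your argument is the standard one and is sound. The decomposition of $\mathcal{W}_n(\G)$ into disjoint cylinders over $L_n$, the Parry formula, and the sandwich $m\,\vert L_n\vert\le\theta^{n-1}\mu(\mathcal{W}_n(\G))\le M\,\vert L_n\vert$ (valid because $u$ and $v$ are strictly positive by irreducibility of $\Sigma_\F$) correctly reduce the claim to identifying $\lim_n\frac1n\ln\vert L_n\vert$ with $h_{\text{top}}(\Sigma_{\F\cup\G})$, and you are right that this is the only step needing real care: $\vert L_n\vert$ counts words avoiding $\F\cup\G$, while the entropy is defined through words that actually occur in points of $\Sigma_{\F\cup\G}$, which may be reducible. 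The gap closes as you indicate: passing to the higher-block presentation with adjacency matrix $A$, the avoiding count is comparable to the total number of length-$n$ paths, which is $O(n^{d}\rho(A)^n)$, while the language count is bounded below by the number of paths inside a recurrent component attaining the spectral radius, which is at least $c\,\rho(A)^n$ along a subsequence; both limits therefore equal $\ln\rho(A)$. (Note this identification is also implicitly used elsewhere in the paper, since Theorem~\ref{thm:formF} computes the generating function of the avoiding count while Theorem~\ref{thm:same_roots} extracts the Perron root from it.) The only other points worth making explicit are that $\lim_n\frac1n\ln\vert L_n\vert$ exists by submultiplicativity of $\vert L_n\vert$, and that the whole argument tacitly assumes $\Sigma_{\F\cup\G}\neq\emptyset$ so that the logarithms are defined; neither affects the substance of your proof.
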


By this result, the problem of computing the escape rate turns into a purely combinatorial problem. Moreover due to this result, comparison of escape rates into two holes $H_{\G_1}$ and $H_{\G_2}$ in a given subshift of finite type $\Sigma_\F$ with $q$ symbols reduces to the comparison of the topological entropies of the subshifts $\Sigma_{\F\cup \G_1}$ and $\Sigma_{\F\cup \G_2}$. This is further equivalent to comparing the escape rates into two holes $H_{\F\cup \G_1}$ and $H_{\F\cup \G_2}$ in the full shift on $q$ symbols. Thus we consider underlying space to be a full shift till Section~\ref{sec:full_morethantwo}. In Section~\ref{sec:subshift}, we consider subshifts of finite type.

\subsection{Combinatorics on words}

We now review some concepts from combinatorics on words which are used in due course (see~\cite{Combinatorial}).

\begin{Def}
	Let $u$ and $w$ be two words of lengths $p_1$ and $p_2$, respectively with symbols from $\Sigma$. The \textit{correlation polynomial} of $u$ and $w$ is defined as
	\[
	(u,w)_z = \sum_{\ell=1}^{p_1} b_\ell z^{p_1-\ell}, 
	\]
	where $b_{\ell}=0$, if $\sigma^{\ell-1}(C_u)\cap C_w=\emptyset$, and $b_{\ell}=1$, otherwise, with $C_u$ and $C_w$ denote the cylinders on $\Sigma^\N$ based at $u$ and $w$, respectively. \\
	When $u=w$, $(u,u)_z$ is said to be the \textit{autocorrelation polynomial} of $u$, and when $u\neq w$, $(u,w)_z$ is said to be the \textit{cross-correlation polynomial} of $u$ and $w$. 
\end{Def}
Note that if $\sigma^{\ell-1}(C_u)\cap C_w=\emptyset$, for all $1\leq \ell\leq p_1$, then $(u,w)_z=0$. Also, if $u\neq w$, $b_1=0$ and if $u=w$, then $b_1=1$. Hence for $u\neq w$, $(u,w)_z$ is a polynomial with degree at most $p_1-2$, and $(u,u)_z$ is a monic polynomial with degree exactly equal to $p_1-1$. 

A collection $\{w_1,w_2,\ldots,w_t\}$ of words is said to be \textit{reduced} if for any $i\ne j$, $w_i$ is not a subword of $w_j$. 

Let $\F=\{w_1,w_2,\ldots,w_t\}$ be a reduced collection of words with symbols from $\Sigma$. Let $f(n)$ denote the number of allowed words in $\Sigma_\F$ and
\begin{eqnarray}\label{eq:gen}
	F(z)&:=&\sum_{n=0}^\infty f(n)z^{-n}
\end{eqnarray}
be the generating function for the sequence $(f(n))_{n\ge 0}$. The function $F(z)$ is rational and its form is described in the following result.

\begin{thm}~\cite{Guibas}\label{thm:formF}
	With the notations as above,
	\begin{equation*}
		F(z)=\dfrac{z}{z-q+r_\F(z)},
	\end{equation*}
	where $r_\F(z)$ is the sum of entries of the inverse of the correlation matrix function $\mathcal{M}(z)=[(w_j,w_i)_z]_{i,j}$. 
\end{thm}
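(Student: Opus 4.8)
The plan is to follow the Guibas--Odlyzko correlation method: derive two linear relations among generating functions and then eliminate the auxiliary ones by inverting the correlation matrix. Throughout, all series are regarded as formal Laurent series in $z^{-1}$ (equivalently, as rational functions), so the algebraic manipulations are justified termwise. Assume, as permitted, that every $w_i\in\F$ has length $p$, and recall $f(0)=1$. Alongside $F(z)$, I would introduce for each $i$ the count $g_i(n)$ of words of length $n$ that contain no forbidden subword \emph{except} for a single occurrence of $w_i$ appearing as a suffix, this being the first (hence only) forbidden occurrence; set $G_i(z)=\sum_{n\ge 1} g_i(n)z^{-n}$. The reduced hypothesis guarantees $g_i(p)=1$ and makes the notion of ``first forbidden occurrence'' unambiguous.

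The first relation comes from appending a letter. Appending each of the $q$ symbols to a correct word of length $n$ produces $q f(n)$ words of length $n+1$, and each such word is either correct or has its first forbidden occurrence as a suffix; hence $q f(n)=f(n+1)+\sum_i g_i(n+1)$. Multiplying by $z^{-(n+1)}$ and summing over $n\ge 0$ yields the relation
\[
q z^{-1}F(z)=F(z)-1+\sum_{i=1}^t G_i(z).
\]

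The second, and main, relation is the correlation identity. Fix $j$ and consider the strings $W w_j$ with $W$ correct; there are $f(N-p)$ of length $N$, so their generating function is $z^{-p}F(z)$. I would count the same set by locating the first forbidden occurrence. Since $W$ is correct, this occurrence, say a copy of $w_i$, must end at position $N-d$ for some $0\le d\le p-1$; the prefix of length $N-d$ is then a word enumerated by $g_i(N-d)$, while the trailing $d$ letters are forced to be the last $d$ letters of $w_j$. Consistency of the overlapping occurrences of $w_i$ and $w_j$ on their $p-d$ shared positions is exactly the condition that the length-$(p-d)$ suffix of $w_i$ equals the length-$(p-d)$ prefix of $w_j$, i.e.\ that the coefficient $b_{d+1}$ in $(w_i,w_j)_z$ equals $1$. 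Establishing that this assignment is a bijection --- in particular that the reconstructed string really lies in $W w_j$ with $W$ correct, and that distinct data give distinct strings --- is the crux of the argument and the step I expect to demand the most care. Summing over $d$ and using $\sum_{d=0}^{p-1} b_{d+1}z^{-d}=z^{-(p-1)}(w_i,w_j)_z$ gives, for every $j$,
\[
z^{-1}F(z)=\sum_{i=1}^t (w_i,w_j)_z\, G_i(z).
\]

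Finally I would assemble the pieces by linear algebra. Writing $\mathbf G=(G_1,\dots,G_t)^T$ and $\mathbf 1$ for the all-ones vector, the correlation identity reads $\mathcal{M}(z)\,\mathbf G=z^{-1}F(z)\,\mathbf 1$ with $\mathcal{M}(z)=[(w_j,w_i)_z]_{i,j}$. Since each diagonal entry $(w_i,w_i)_z$ is monic of degree $p-1$ while the off-diagonal entries have degree at most $p-2$, the matrix $\mathcal{M}(z)$ is diagonally dominant for large $z$ and hence invertible over the field of rational functions; thus $\mathbf G=z^{-1}F(z)\,\mathcal{M}(z)^{-1}\mathbf 1$ and
\[
\sum_{i=1}^t G_i(z)=\mathbf 1^T\mathbf G=z^{-1}F(z)\,\mathbf 1^T\mathcal{M}(z)^{-1}\mathbf 1=z^{-1}F(z)\,r_\F(z),
\]
because $r_\F(z)$ is exactly the sum of all entries of $\mathcal{M}(z)^{-1}$. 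Substituting this into the first relation gives $q z^{-1}F=F-1+z^{-1}F\,r_\F$, and solving for $F$ produces $F(z)=z/\big((z-q)+r_\F(z)\big)$, as claimed.
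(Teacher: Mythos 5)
The paper does not prove this theorem — it is quoted from Guibas--Odlyzko \cite{Guibas} — so there is no internal proof to compare against. Your argument is a correct reconstruction of the standard Guibas--Odlyzko proof: the two counting identities, the bijection sending a string $Ww_j$ to the data (first forbidden word $w_i$, offset $d$, prefix counted by $g_i$), and the elimination of the $G_i$ via $\mathcal{M}(z)^{-1}$ (whose determinant is a monic polynomial of degree $t(p-1)$, hence nonzero) are all sound; the step you flag as the crux goes through because the words are distinct and of equal length, so at most one forbidden word can end at any given position, making the ``first forbidden occurrence'' decomposition well defined and invertible.
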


\begin{rems}\label{rem:formF}
	1) If the length of each word in $\F$ is $p$, then the numerator of $r_F(z)$ is a polynomial of degree $(t-1)(p-1)$ and the denominator of $r_F(z)$ is a polynomial of degree $t(p-1)$. The generating function $F(z)$ is rational by Theorem~\ref{thm:formF}.  \\
	2) If $(w_i,w_j)_z=0$ for all $i\ne j$, then $r_\F(z)=\sum_{i=1}^t 1/(w_i,w_i)_z$.
\end{rems}

The authors have proved in~\cite{Parry} that the Perron root $\theta$ of the subshift $\Sigma_\F$ is the largest positive real pole of the generating function $F(z)$. The statement of the result is given below and we refer to~\cite[Theorem 4.1]{Parry} for the proof. 

\begin{thm}\label{thm:same_roots}
	Consider an irreducible subshift of finite type $\Sigma_\F$ and let $\ln\theta$ be its topological entropy. The (rational) generating function $F(z)$ is analytic outside the closed disk centered at the origin with radius $\theta$. Moreover, $\theta$ is a simple pole of $F$. 
\end{thm}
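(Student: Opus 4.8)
The plan is to study the power series $F(z)=\sum_{n\ge 0}f(n)z^{-n}$ through the substitution $w=1/z$, which converts it into the ordinary power series $g(w):=\sum_{n\ge 0}f(n)w^{n}$, and then to read off its singularities. Three inputs drive everything: the coefficients $f(n)$ are non-negative (they count allowed words); the entropy formula quoted from~\cite[Proposition 3.5]{Pollicott_book} gives $\lim_{n\to\infty}f(n)^{1/n}=\theta$; and $F$ is rational by Theorem~\ref{thm:formF}. For analyticity, I would apply the Cauchy--Hadamard root test: since $\limsup_n|f(n)w^{n}|^{1/n}=|w|\,\theta$, the series $g(w)$ converges absolutely and locally uniformly on $\{|w|<1/\theta\}$ and is holomorphic there, so $F(z)$ converges and is holomorphic on $\{|z|>\theta\}$, with removable value $F(\infty)=f(0)=1$. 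Thus $F$ has no poles in $|z|>\theta$, which is exactly the claim that $F$ is analytic outside the closed disc of radius $\theta$, and every pole of the rational function $F$ lies in $\{|z|\le\theta\}$.

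To show $\theta$ is a pole, note the radius of convergence of $g$ is exactly $1/\theta$ (root test again, using $\lim_n f(n)^{1/n}=\theta$). Because the coefficients $f(n)$ are non-negative, the Vivanti--Pringsheim theorem forces the positive real point $w=1/\theta$ on the circle of convergence to be a singular point of $g$. Since $F$, and hence $g$, is rational by Theorem~\ref{thm:formF}, its only singularities are poles; therefore $w=1/\theta$ is a pole of $g$, i.e. $z=\theta$ is a pole of $F$. As all poles lie in $\{|z|\le\theta\}$ and $\theta>0$ is real, it is the largest-modulus pole and in particular the largest positive real pole of $F$.

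For simplicity in the aperiodic case I would pass to the adjacency matrix $A$ of the one-step shift to which $\Sigma_\F$ is conjugate: irreducibility of $\Sigma_\F$ makes $A$ irreducible, and aperiodicity makes $A$ primitive. By the Perron--Frobenius theorem $\theta$ is then a simple, \emph{strictly dominant} eigenvalue with strictly positive right and left eigenvectors $v,u$ (normalised by $u^{T}v=1$), so $A^{m}=\theta^{m}vu^{T}+O(\mu^{m})$ for some $\mu<\theta$. Counting allowed words as paths in the graph of $A$ and using the conjugacy to relate these counts to $f(n)$ (up to a fixed index shift and finitely many initial terms), one extracts $f(n)=C\,\theta^{n}\bigl(1+o(1)\bigr)$ with $C=(\mathbf{1}^{T}v)(u^{T}\mathbf{1})>0$, the positivity of $C$ coming from that of $u$ and $v$. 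Feeding this into $g$ gives $g(w)=\dfrac{C}{1-\theta w}+h(w)$ with $h$ holomorphic on a strictly larger disc, exhibiting an order-one pole at $w=1/\theta$ and hence a simple pole of $F$ at $z=\theta$.

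The first two parts are essentially soft, the only care being that $F$ is genuinely rational, so that the Pringsheim singularity is a pole rather than a branch point. The real work, and the main obstacle, is the simplicity claim: one must make the bookkeeping between the word-counts $f(n)$ of $\Sigma_\F$ and the matrix powers of its one-step model precise enough to obtain the clean asymptotic $f(n)\sim C\theta^{n}$ with $C>0$, and it is precisely the primitivity of $A$ (aperiodicity) that eliminates the other peripheral eigenvalues of modulus $\theta$ and thereby forces the pole at $\theta$ to have order one.
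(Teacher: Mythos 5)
Your proposal is correct, and it diverges from the paper's proof in one substantive place. The analyticity on $\{|z|>\theta\}$ is proved identically (Cauchy--Hadamard applied to $\limsup_n f(n)^{1/n}=\theta$), and your simplicity argument in the aperiodic case is essentially the paper's Step~3 in different clothing: the paper writes $f(n)=c\theta^n+e(n)$ with $e(n)=O(n^\alpha|\lambda|^n)$, $|\lambda|<\theta$, and checks that $\sum_n\bigl(f(n+1)-\theta f(n)\bigr)z^{-n}$ converges past $\theta$, while you package the same spectral-gap input as $g(w)=\frac{C}{1-\theta w}+h(w)$; both hinge on primitivity of $A$ and on $C>0$ (you get positivity from the Perron eigenvectors, the paper from the contradiction with $\limsup f(n)^{1/n}=\theta$). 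The genuine difference is how you show $\theta$ is a pole. The paper works through the peripheral spectrum: it uses the Jordan decomposition to write $f(n)=(c_1\omega^n+\dots+c_P\omega^{Pn})\theta^n+e(n)$, restricts to the subsequence $n_\ell=P\ell$ to see $f(n_\ell)\theta^{-n_\ell}\to C\ne 0$, and concludes that $\sum f(n)\theta^{-n}$ diverges. You instead invoke Vivanti--Pringsheim: non-negative coefficients force the positive real point $w=1/\theta$ on the circle of convergence to be singular, and rationality (Theorem~\ref{thm:formF}) upgrades the singularity to a pole. Your route is softer and shorter --- it never touches the eigenvalues $\theta\omega,\dots,\theta\omega^P$ and needs only non-negativity, the radius of convergence, and rationality --- and it is the cleaner argument if one only wants the existence of the pole. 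The paper's route is more laborious but self-contained (no appeal to Pringsheim), produces the asymptotics of $f(n)$ along the period subsequence as a by-product, and, as the paper remarks, adapts to reducible subshifts with a nonzero irreducible component. The one place where your write-up is deliberately schematic --- the bookkeeping between $f(n)$ and $\mathbf{1}^{T}A^{n-1}\mathbf{1}$ under the block conjugacy, which shifts indices and alters finitely many initial terms --- affects neither the location nor the order of the pole, so nothing is lost; but you are right to flag it as the only step requiring care, and the error term should be recorded as $f(n)=C\theta^{n}+O(n^{\alpha}\mu^{n})$ rather than merely $C\theta^{n}(1+o(1))$ so that $h$ is visibly holomorphic on a strictly larger disc.
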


\begin{rems}
	1) Theorem~\ref{thm:same_roots} can be extended to reducible subshifts provided there is at least one non-zero irreducible component. \\
	2)  From Theorems~\ref{thm:formF} and~\ref{thm:same_roots}, the Perron root $\theta$ is the largest real zero in modulus of the rational function $z-q+r(z)$.
\end{rems}

\begin{cor}\label{cor:e_rate}
	Consider an irreducible subshift of finite type $\Sigma_{\F}$. Let $\G$ be a finite collection of words with symbols from $\Sigma$. Then $\rho(\G;\Sigma_{\F})=-\ln(\lambda/\theta)$, where $\theta$ and $\lambda$ are the largest real poles of the generating functions corresponding to the collections $\F$ and $\F\cup\G$, respectively.
\end{cor}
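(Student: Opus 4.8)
The plan is to chain together Theorem~\ref{thm:esc_rate} with the identification of the topological entropy as the logarithm of the Perron root, and then with Theorem~\ref{thm:same_roots}, which characterizes the Perron root as the largest real pole of the associated generating function. First I would apply Theorem~\ref{thm:esc_rate} to rewrite the escape rate as a difference of topological entropies,
\[
\rho(\G;\Sigma_\F) = h_{\text{top}}(\Sigma_\F) - h_{\text{top}}(\Sigma_{\F\cup\G}).
\]
Since $\Sigma_\F$ is irreducible, the entropy formula gives $h_{\text{top}}(\Sigma_\F)=\ln\theta$, where $\theta$ is the Perron root of its adjacency matrix, and by Theorem~\ref{thm:same_roots} this $\theta$ is precisely the largest real pole of the generating function associated to $\F$.

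The analogous statement for $\Sigma_{\F\cup\G}$ would then give $h_{\text{top}}(\Sigma_{\F\cup\G})=\ln\lambda$ with $\lambda$ the largest real pole of the generating function associated to $\F\cup\G$, and substituting would yield $\rho(\G;\Sigma_\F)=\ln\theta-\ln\lambda=-\ln(\lambda/\theta)$, as claimed. The one point requiring care---and the main obstacle---is that adjoining the forbidden words of $\G$ to $\F$ may destroy irreducibility, so Theorem~\ref{thm:same_roots} cannot be quoted verbatim for $\Sigma_{\F\cup\G}$.

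To close this gap I would invoke the extension recorded in Remark~(1) following Theorem~\ref{thm:same_roots}: provided $\Sigma_{\F\cup\G}$ retains at least one non-trivial irreducible component (that is, the hole does not swallow the whole space), the topological entropy of a reducible subshift equals the logarithm of the largest Perron root among its irreducible components, and the growth rate of the word-counting sequence---hence the location of the largest real pole of its generating function---is governed by exactly this dominant component. Thus $\lambda$ is again the largest real pole and the substitution above goes through. I would also remark on the degenerate case in which the surviving subshift is finite, where the entropy vanishes and the largest real pole is $\lambda=1$, so that the formula $\rho(\G;\Sigma_\F)=-\ln(\lambda/\theta)=\ln\theta$ continues to hold and records escape into a hole that eventually traps all but finitely many orbits.
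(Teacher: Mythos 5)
Your proposal is correct and follows essentially the same route as the paper, whose proof is a one-line appeal to Theorems~\ref{thm:esc_rate} and~\ref{thm:same_roots}; you simply spell out the chain (escape rate as entropy difference, entropy as $\ln$ of the Perron root, Perron root as the largest real pole) that the paper leaves implicit. Your added care about $\Sigma_{\F\cup\G}$ possibly being reducible is exactly the point the paper defers to Remark~(1) after Theorem~\ref{thm:same_roots}, so nothing in your argument diverges from the intended proof.
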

\begin{proof}
	This result is immediate from Theorems~\ref{thm:esc_rate} and~\ref{thm:same_roots}.
\end{proof}

\begin{rem}\label{rem:Markov}
	By Theorem~\ref{thm:formF} and Corollary~\ref{cor:e_rate}, if the underlying subshift of finite type is induced with the Parry measure then the escape rates into two holes with identical correlation function $r(z)$ are the same. The following example illustrates that this may not be the case when the space is induced with a general Markov measure that is not the Parry measure. Consider the full shift on two symbols equipped with the Markov measure corresponding to the stochastic matrix $\begin{pmatrix}
		1/3&2/3\\
		1/2&1/2
	\end{pmatrix}$ (we refer to~\cite{Parry64} for the definition of a Markov measure). Consider two collections $\G_1=\{00,11\}$ and $\G_2=\{01,10\}$. Let $H_1$ and $H_2$ be holes corresponding to $\G_1$ and $\G_2$, respectively. The correlation functions $r_{\G_1}(z), r_{\G_2}(z)$ satisfy $r_{\G_1}(z)=r_{\G_2}(z)=(z+1)/2$, for all $z$. However the escape rates are $\rho(\G_1)=\ln(3)/2$ and $\rho(\G_2)=\ln(2)$.  
\end{rem}

\subsection{Minimal period of a hole}

\begin{Def}\label{def:min-period}
	Let $\Sigma_\F$ be a subshift of finite type and let $\G$ be a finite collection of allowed words in $\Sigma_\F$. Let $\tau_{\G}$ be the minimum of the period of periodic points in the hole $H_{\G}=\bigcup_{u\in\G} C_{u}$, under the shift map. We call $\tau_{\G}$, the \textit{minimal period of the hole $H_{\G}$}. If $\G$ contains only one word $u$, then we use the shorthand $\tau_u$ for $\tau_\G$. 
\end{Def}

\begin{rems}\label{rem:prt_mp}
	1) The minimal period of the hole $H_\G$ is given by $\tau_{\G}=\min_{u\in\G}\tau_{u}$. For any finite word $u$ of length $p$, $1\leq\tau_u\leq p$. If $1\leq \tau_{u}\leq p-1$, then the second largest degree of $(u,u)_z$ is $z^{p-1-\tau_{u}}$. Moreover, $(u,u)_z=z^{p-1}$, if $\tau_{u}=p$.\\
	2) When all the cross-correlation polynomials between the forbidden words in $\G$ are zero, $\tau_{\G}$ is the Poincar\'e recurrence time of $H_{\G}$. This need not be the case when the cross-correlation polynomials are non-zero. For example, when $\G=\{01000,10000\}$, the Poincar\'e recurrence time of $H_{\G}$ is 1, whereas $\tau_{\G}=4$.
\end{rems}

\section{Statement of main results}\label{sec:main-results}
Here we state the main results of this paper; proofs are presented in later sections. The first set of results are for a full shift $\Sigma^\mathbb{N}$ on $q$ symbols and discuss comparison of the escape rates into two holes, each corresponding to a union of two cylinders based at words of equal length $p\ge 2$. The techniques used in the proofs are different when $p=2$ and when $p\geq 3$, hence we have separated these results. Let $r_{\G_1}(z)$ and $r_{\G_2}(z)$ denote the rational functions corresponding to the collections $\G_1$ and $\G_2$, respectively, as described in Theorem~\ref{thm:formF}.

\begin{customthm}{1}
	Let $q\geq 2$ and $\G_1$, $\G_2$ be collections each having two words of length two ($p=2$). If $r_{\G_1}(2)<r_{\G_2}(2)$, then $\rho(\G_1)<\rho(\G_2)$.
\end{customthm}

\begin{customthm}{2}
	\label{thm:3}
	Let $p\geq 3$, $q>3p^2+2$, and $\G_1$, $\G_2$ be collections each having two words of length $p$. If $r_{\G_1}(3p^2+2)<r_{\G_2}(3p^2+2)$, then $\rho(\G_1)<\rho(\G_2)$.
\end{customthm}

The above results give the relationship between the escape rate and the rational function $r(z)$, and are proved in Section~\ref{subsec:esc_r}. 

The next result is also for a full shift $\Sigma^\mathbb{N}$ and discusses the relationship between the minimal period of the hole and the escape rate when the hole corresponds to a union of two cylinders based at words of length $p$ with zero cross-correlations. In this case, the minimal period is same as the Poincar\'e recurrence time, as was pointed out earlier. Let $\tau_{\G_1}$ and $\tau_{\G_2}$ denote the minimal period of the hole $H_{\G_1}$ and $H_{\G_2}$, respectively. The proof of the next result is given in Section~\ref{subsec:esc_min}.

\begin{customthm}{4}
	Let $\G_1=\{u_1,u_2\}$ and $\G_2=\{w_1,w_2\}$, where $u_1$, $u_2$, $w_1$, and $w_2$, are words of equal length $p$ with $(u_i,u_j)_z=(w_i,w_j)_z=0$, for $1\leq i\neq j\leq 2$.	For $p=2$ and $q\geq 2$, or for $p\geq 3$ and $q\geq 5$, if $\tau_{\G_1}<\tau_{\G_2}$, then $\rho(\G_1)<\rho(\G_2)$. 
\end{customthm} 

Finally we generalize the results stated above to the case when holes are unions of more than two cylinders. 

\begin{customthm}{5}
	Suppose $\G_1$ and $\G_2$ are finite collections of $t$ words each of length $p$ with symbols from $\Sigma$. Then for $p\geq 3$, there exists a positive constant $D=D(t,p)$ such that for all $q> D$, if $r_{\G_1}(D)<r_{\G_2}(D)$, then $\rho(\G_1)<\rho(\G_2)$.
\end{customthm}

\begin{customthm}{6}
	Suppose $t\geq 2$ and $q\geq 2$ satisfy $(q-1)(t+1)-t q\left(1+1/q\right)^{t-1}\geq 0$.
	Let $\G_1=\{u_1,u_2,\ldots,u_t\}$ and $\G_2=\{w_1,w_2,\ldots,w_t\}$ be such that $u_i,w_i$ are words of equal length $p$ with $(u_i,u_j)_z=(w_i,w_j)_z=0$ for $1\leq i\neq j\leq t$. If $\tau_{\G_1}<\tau_{\G_2}$, then $\rho(\G_1)<\rho(\G_2)$. 
	The above inequality holds true for $q\geq t 2^{t-1}+1$.
\end{customthm}

\noindent These results are proved in Sections~\ref{subsec:esc_r_1} and~\ref{subsec:gen_cor_period}, respectively.

\section{Full shift with union of two cylinders as hole}\label{sec:full_two}
In this section, we consider a full shift $\Sigma^\mathbb{N}$ on $q$ symbols with a hole which is a union of two cylinders based at words of identical length. In Theorems~\ref{thm:esc_r_1} and~\ref{thm:esc_r_2}, we explore the relationship between the escape rate and the corresponding rational function $r(z)$. Furthermore, in Theorem~\ref{thm:esc_min1}, we discuss the relationship between the escape rate and the minimal period of the hole. Our results easily extend to the case when the hole is a union of more than two cylinders, which is discussed in Section~\ref{sec:full_morethantwo}. All the results are applicable to maps conjugate to a full shift, for instance, to a product of expansive Markov maps where the individual maps are conjugate to a full shift as discussed in~\cite{Product}.  

\subsection{Relationship between the escape rate and the rational function $r(z)$}\label{subsec:esc_r}
For a finite collection $\G$ of words with symbols from $\Sigma$, let $H_{\G}=\bigcup_{w\in\G} C_w$, the union of cylinders based at words from $\G$. We say that $H_\G$ is a hole corresponding to the collection $\G$. Let $\rho(\G)$ denote the escape rate of the shift map on the full shift $\Sigma^\N$ into the hole $H_\G$. We recall the following result for a full shift with a single cylinder as a hole. 

\begin{thm}\label{thm:one-word}\cite[Lemma 4.5.1]{BY}
	Let $\G_1=\{w_1\}$ and $\G_1=\{w_2\}$, where $w_1$ and $w_2$ are words of identical length. If $(w_1,w_1)_q<(w_2,w_2)_q$, then $\rho(\G_2)<\rho(\G_1)$, where $(w,w)_q$ is the value of $(w,w)_z$ evaluated at $z=q$.
\end{thm}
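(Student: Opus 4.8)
The plan is to convert the statement about escape rates into a comparison of two algebraic numbers, namely the largest real poles of the two generating functions. Write $c_i(z):=(w_i,w_i)_z$ for the auto-correlation polynomial of $w_i$, which is monic of degree $p-1$ with coefficients in $\{0,1\}$. Since the underlying space is the full shift, $\F=\emptyset$ has Perron root $\theta=q$, and by Theorem~\ref{thm:formF} together with Remark~\ref{rem:formF}(2) (a single word, hence no cross-correlations) the generating function for $\G_i=\{w_i\}$ is $F_{\G_i}(z)=z/\big((z-q)+1/c_i(z)\big)$. By Corollary~\ref{cor:e_rate}, $\rho(\G_i)=\ln(q/\lambda_i)$, where $\lambda_i$ is the largest real pole of $F_{\G_i}$, equivalently the largest real root of $(q-z)c_i(z)=1$. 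Thus the claim $\rho(\G_2)<\rho(\G_1)$ is equivalent to $\lambda_1<\lambda_2$, and the whole proof reduces to comparing these two roots.

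I would reformulate the root equation as $G_i(\lambda_i)=q$, where $G_i(z):=z+1/c_i(z)$. Two preliminary facts localize and order the problem. First, since $c_i(z)\ge z^{p-1}$ for $z>0$, the relation $q-\lambda_i=1/c_i(\lambda_i)\le\lambda_i^{-(p-1)}$ pins $\lambda_i$ into $[q-1,q)$; in particular $\lambda_i\ge 2$ as soon as $q\ge 3$. Second, a direct degree and coefficient estimate gives $c_i'(z)<c_i(z)^2$ on $[2,\infty)$ (the left side has degree $p-2$ with bounded coefficients, while the right side is at least $z^{2p-2}$, and $2^{p-1}>p-1$), so $G_i'(z)=1-c_i'(z)/c_i(z)^2>0$ there; hence $G_i$ is strictly increasing on $[2,q]$ and $\lambda_i$ is the unique point of $[2,q)$ at which $G_i=q$.

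The heart of the argument is to propagate the hypothesis $c_1(q)<c_2(q)$, an inequality at the single point $z=q$, to the location of the roots. Set $d(z):=c_2(z)-c_1(z)$. Because $c_1$ and $c_2$ are monic of the same degree with coefficients in $\{0,1\}$, the difference $d$ has degree at most $p-2$ and coefficients in $\{-1,0,1\}$, and $d\not\equiv 0$ since the hypothesis is strict. The key elementary observation is that for such a polynomial the leading term dominates: if its leading coefficient is $+1$ then $d(z)\ge z^m-(z^{m-1}+\dots+1)\ge 1$ for all $z\ge 2$, and symmetrically when it is $-1$. Consequently $\operatorname{sign}d(z)$ is constant on $[2,\infty)$; as $d(q)>0$ with $q\ge 2$, that sign is $+1$ and so $c_2(z)>c_1(z)$ for every $z\ge 2$. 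Evaluating at the root $\lambda_1$ (which satisfies $\lambda_1\ge 2$ when $q\ge 3$) gives $G_2(\lambda_1)=\lambda_1+1/c_2(\lambda_1)<\lambda_1+1/c_1(\lambda_1)=G_1(\lambda_1)=q=G_2(\lambda_2)$. Since $G_2$ is strictly increasing on $[2,q]$, this forces $\lambda_1<\lambda_2$, whence $\rho(\G_2)=\ln(q/\lambda_2)<\ln(q/\lambda_1)=\rho(\G_1)$, as required.

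The main obstacle is the borderline regime in which the roots are driven down into $[1,2)$, namely the full shift on $q=2$ symbols, which is exactly the original setting of~\cite{BY}. There the localization only yields $\lambda_i\in(1,2)$, and on $(1,2)$ the sign of $d$ need not be constant (for instance $z^2-z-1$ is positive at $z=2$ but negative at $z=1.5$), so the persistence of the ordering $c_2>c_1$ can genuinely fail pointwise. For $q=2$ I would therefore abandon the crude leading-term domination and instead exploit the admissibility constraints satisfied by genuine auto-correlation polynomials (the Fine--Wilf period structure governing which coefficients $b_\ell$ may simultaneously vanish), combined with a direct analysis of $(2-z)c_i(z)=1$ near the root, to recover $c_2(\lambda_1)>c_1(\lambda_1)$ in this remaining case.
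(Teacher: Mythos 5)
Your reduction of the statement to comparing the largest real roots $\lambda_i$ of $(q-z)(w_i,w_i)_z=1$ is exactly right, and for $q\ge 3$ the argument is complete and correct: the localization $\lambda_i\in(q-1,q)$ (no root of $(q-z)c_i(z)-1$ lies in $[q,\infty)$, and there is a sign change on $(q-1,q)$), the monotonicity of $G_i(z)=z+1/c_i(z)$ on $[2,\infty)$ via $c_i'(z)<c_i(z)^2$, and the observation that a nonzero polynomial with coefficients in $\{-1,0,1\}$ is dominated by its leading term on $[2,\infty)$ and hence keeps the sign it has at $z=q$ there --- so that $c_1(q)<c_2(q)$ propagates to $c_1(\lambda_1)<c_2(\lambda_1)$ --- all check out. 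For context, the paper does not prove this theorem at all; it cites \cite[Lemma 4.5.1]{BY}, and your $q\ge 3$ mechanism is essentially the same leading-coefficient persistence that the authors invoke in the paragraph following the theorem to pass from the evaluation at $z=2$ to general $z=q$.

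The genuine gap is the case $q=2$, which is within the scope of the statement as written (no restriction on $q$ appears) and is precisely the original setting of the cited result. You diagnose correctly why your mechanism fails there: the roots sit in $(1,2)$, where the sign of $c_2-c_1$ is not controlled by its value at $2$. But the proposed repair --- exploiting the Fine--Wilf constraints on admissible autocorrelation polynomials together with ``a direct analysis near the root'' --- is an unexecuted plan rather than a proof. This is not a routine omission: the $q=2$ case is the substantive content of \cite[Lemma 4.5.1]{BY}, whose proof is a nontrivial combinatorial argument about which period sets can occur and how they order the roots; soft degree-and-coefficient estimates of the kind you use elsewhere provably cannot settle it, since (as your own example $z^2-z-1$ shows) sign persistence on $(1,2)$ genuinely fails for general $\{-1,0,1\}$-polynomials. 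As written, your argument establishes the theorem only for $q\ge 3$ and leaves the $q=2$ case open; either restrict the statement to $q\ge 3$ or carry out (or explicitly cite) the Bunimovich--Yurchenko argument for $q=2$.
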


\noindent Since the coefficients of the autocorrelation polynomials are either 0 or 1, if $(w_1,w_1)_2>(w_2,w_2)_2$ then $(w_1,w_1)_q>(w_2,w_2)_q$ for all $q\geq 2$, and hence by the preceding result, $\rho(\G_2)>\rho(\G_1)$. Consequently, for two fixed holes, the relation between the corresponding escape rates remain unchanged when the size of the symbol set is changed. Our aim is to extend this result to the situation where the hole is a union of cylinders based at words of identical length.

Let us consider the case when the hole corresponds to two words of identical length. Let $\G_1=\{u_1,w_1\}$ and $\G_2=\{u_2,w_2\}$, where $u_1\neq w_1,u_2\neq w_2$ are all words of length $p\ge 2$. We would like to compare the escape rates $\rho(\G_1)$ and $\rho(\G_2)$ of the shift map on the full shift $\Sigma^\N$ into the holes $H_{\G_1}$ and $H_{\G_2}$ corresponding to the collections $\G_1$ and $\G_2$, respectively. 

For $i=1,2$, let $f_{i}(k)$ denote the number of words of length $k$ with symbols from $\Sigma$ which do not contain any of the words from the collection $\G_i$ as subwords, and let the generating function be $F_i(z)=\sum_{n=0}^\infty f_i(n)z^{-n}$. By Theorem~\ref{thm:formF}, \[
F_{i}(z)=\dfrac{z}{z-q+r_{\G_i}(z)}, \]
where 
\[
r_{\G_i}(z)=\dfrac{(u_i,u_i)_z+(w_i,w_i)_z-(u_i,w_i)_z-(w_i,u_i)_z}{(u_i,u_i)_z(w_i,w_i)_z-(u_i,w_i)_z(w_i,u_i)_z},
\]
is the sum of the entries of the inverse of the correlation matrix function corresponding to $\G_i$.

\begin{table}[h] 
	\centering
	\caption{Escape rate values for $p=2$ and $q=2,3,\ldots,10$. For all real $z>1$, $r_{\G_1}(z)<r_{\G_2}(z)<r_{\G_3}(z)<r_{\G_4}(z)$.}
	\label{table:1}
	\begin{footnotesize}
		\begin{tabular}{*5c} 
			\hline
			$i$ & 1&2&3&4\\ \hline
			$\G_i$ & $\{aa,bb\}$	& $\{ab,ca\}$ & $\{aa,bc\}$ & $\{ab,cd\} / \{ab,ac\}$\\ \hline
			\backslashbox{$q$\kern-2em}{\kern-1em $r_{\G_i}(z)$} & $\dfrac{2}{z+1}$ & $\dfrac{2z-1}{z^2}$ & $\dfrac{2z+1}{z(z+1)}$ & $\dfrac{2}{z}$\\ \hline
			2&0.6931&\dots&\dots&\dots\\
			3&0.2172&0.2550& 0.2890&0.4055 \\
			4&0.1161& 0.1302&0.1361&0.1583\\
			5&0.0735&0.0804&0.0824&0.0918\\
			6&0.0510&0.0550&0.0559&0.0609\\
			7&0.0376&0.0401&0.0406&0.0436\\
			8&0.0289&0.0306&0.0309&0.0328\\ 
			9&0.023&0.0241&0.0243&0.0257\\
			10&0.0187&0.0195&0.0196&0.0206\\
			\hline
		\end{tabular}
	\end{footnotesize}
\end{table}

The escape rates into the holes corresponding to two forbidden words are given in Tables~\ref{table:1},~\ref{table:2} and~\ref{table:1general}. These tables show the relationship between the rational functions $r(z)$ and the escape rate for $p=2,3,5$, and for certain values of $q$. In the tables, $a, b, c, d, e, f$ are distinct symbols from $\Sigma=\{0,1,\ldots,q-1\}$. 

Let us make a few observations at this point. In Table~\ref{table:2} where $p=3$, consider the collections $\G_2=\{aaa,aba\}$ and $\G_3=\{abb,bba\}$. For all real $z\ge 2$, $r_{\G_2}(z)<r_{\G_3}(z)$. When $q=2$, $\rho(\G_2)=\rho(\G_3)$, however, for $q=3,4,5$, $\rho(\G_2)<\rho(\G_3)$. Also, in Table~\ref{table:1general} where $p=5$, consider the collections $\G_5=\{bbabb,bbbab,bbbba\}$ and $\G_6=\{abbbb,bbbba,bbabb\}$. For all real $z\ge 3$, $r_{\G_5}(z)<r_{\G_6}(z)$. When $q=2$, $\rho(\G_5)>\rho(\G_6)$, however, for $q=3$, $\rho(\G_5)<\rho(\G_6)$. 

These tables suggest the following: a) for any pair $i,j$, there exists a number $q_0$ such that $r_{\G_i}(z)-r_{\G_j}(z)$ has the same sign for all real $z\ge q_0$, and b) $\rho(\G_i)<\rho(\G_j)$ if $r_{\G_i}(q_0)<r_{\G_j}(q_0)$. Indeed we have Theorems~\ref{thm:esc_r_1} and~\ref{thm:esc_r_2} dealing with the cases $p=2$ and $p\ge 3$, respectively.

\begin{thm} \label{thm:esc_r_1}
	Let $q\ge 2$ and $\G_1$, $\G_2$ be the collections each having two words of length 2. If $r_{\G_1}(2)<r_{\G_2}(2)$, then $\rho(\G_1)<\rho(\G_2)$.
\end{thm}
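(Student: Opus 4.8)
The plan is to turn the inequality between escape rates into an inequality between two algebraic numbers, and then to exploit a rigidity that is special to words of length two. Since the ambient space is the full shift, its generating function is $z/(z-q)$, so its Perron root is $\theta=q$. Hence Corollary~\ref{cor:e_rate} gives $\rho(\G_i)=\ln q-\ln\lambda_i$, where $\lambda_i$ is the largest real pole of $F_i$, equivalently (by Theorems~\ref{thm:formF} and~\ref{thm:same_roots}) the largest real root of $g_i(z):=(z-q)+r_{\G_i}(z)$. As $\ln$ is increasing, it suffices to prove $\lambda_1>\lambda_2$.

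First I would make the correlation data explicit for $p=2$. For a word $u=i_1i_2$ one has $(u,u)_z=z+\epsilon_u$, where $\epsilon_u=1$ iff $i_1=i_2$, and for $u\ne w$ the cross-correlation $(u,w)_z$ is the constant $\mathbf 1_{\{i_2=j_1\}}\in\{0,1\}$. Substituting these into the displayed formula for $r_{\G_i}(z)$ and running through the finitely many admissible patterns of $(\epsilon_u,\epsilon_w)$ and of the two cross-correlations, I would verify that $r_{\G}$ can only equal one of the four functions $\tfrac{2}{z+1}$, $\tfrac{2z-1}{z^2}$, $\tfrac{2z+1}{z(z+1)}$, $\tfrac{2}{z}$ of Table~\ref{table:1}. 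The decisive point is that these four are strictly totally ordered on $(1,\infty)$, in that order, and the order is independent of $z$: each pairwise difference is a single rational function of constant sign for $z>1$ (for instance $\tfrac{2z-1}{z^2}-\tfrac{2}{z+1}=\tfrac{z-1}{z^2(z+1)}>0$). Consequently the hypothesis $r_{\G_1}(2)<r_{\G_2}(2)$ determines which two of the four functions occur and upgrades to $r_{\G_1}(z)<r_{\G_2}(z)$ for all $z>1$.

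With this in hand the passage to the roots is an intermediate-value argument. Both $\lambda_i$ lie in $[1,q)$: the topological entropy of $\Sigma_{\G_i}$ is nonnegative, so $\lambda_i\ge 1$, while forbidding words from the full shift strictly lowers the growth rate (the adjacency matrix is a proper $0/1$ submatrix of the all-ones matrix, whose spectral radius is then $<q$), so $\lambda_i<q$. The four functions have all their poles at $z\le 0$, so each $g_i$ is continuous on $[1,q]$, with $g_i(q)=r_{\G_i}(q)>0$. When $\lambda_2>1$ I would evaluate at $z=\lambda_2$: from $r_{\G_1}(\lambda_2)<r_{\G_2}(\lambda_2)$ we get $g_1(\lambda_2)<g_2(\lambda_2)=0$, while $g_1(q)>0$; continuity of $g_1$ on $[\lambda_2,q]$ then yields a root of $g_1$ in $(\lambda_2,q)$, whence $\lambda_1>\lambda_2$.

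The delicate point, and the step I expect to be the main obstacle, is the boundary case $\lambda_2=1$, where the strict order of the four functions degenerates, since $\tfrac{2}{z+1}$ and $\tfrac{2z-1}{z^2}$ agree at $z=1$; there the evaluation at $z=\lambda_2$ yields only $g_1(1)\le 0$ rather than strict negativity. I would dispose of this by noting that $\lambda_2=1$ forces $r_{\G_2}(1)=q-1$, which, together with the small possible values $\{1,1,\tfrac32,2\}$ of the four functions at $z=1$, confines $q$ to $\{2,3\}$. In these finitely many configurations one checks directly that either the realizability constraints on short alphabets make the hypothesis $r_{\G_1}(2)<r_{\G_2}(2)$ impossible (for instance every admissible pair of length-two words over two symbols yields $r_{\G}=\tfrac{2}{z+1}$, so the hypothesis is vacuous for $q=2$), or the inequality $g_1(1)<0$ is in fact strict, again giving $\lambda_1>1=\lambda_2$. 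This completes the proof that $\lambda_1>\lambda_2$, and hence that $\rho(\G_1)<\rho(\G_2)$.
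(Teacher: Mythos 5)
Your proof is correct and follows essentially the same route as the paper's: enumerate the admissible length-two collections, reduce to the four possible rational functions $\tfrac{2}{z+1}$, $\tfrac{2z-1}{z^2}$, $\tfrac{2z+1}{z(z+1)}$, $\tfrac{2}{z}$, use their strict ordering on $(1,\infty)$, and compare the largest real roots of $(z-q)+r_{\G_i}(z)$ by a sign/intermediate-value argument. The only real divergence is bookkeeping: the paper localizes each root in $(q-1,q)$ (so for $q\ge 3$ the degenerate case $\lambda_2=1$ that you treat separately never arises) and verifies via Rouch\'e on the cleared-denominator polynomials that this root is the dominant one, whereas you rely on the pole identification of Theorem~\ref{thm:same_roots} together with the cruder bound $\lambda_i\in[1,q)$.
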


\begin{proof} 
	The only possible collections with two words of length 2 are $\{aa,bb\}$, $\{aa,ab\}$, $\{ab,ba\}$, $\{ab,ca\}$, $\{ab,bc\}$, $\{aa,bc\}$, $\{ab,cd\}$, and $\{ab,ac\}$. Consequently there are four possible forms for $r(z)$, given by
	\[
	r_1(z)=\dfrac{2}{z+1},\ r_2(z)=\dfrac{2z-1}{z^2},\ r_3(z)=\dfrac{2z+1}{z(z+1)},\ r_4(z)=\dfrac{2}{z}.
	\] 
	The collections $\{ab,ca\}$, $\{ab,bc\}$, $\{aa,bc\}$, $\{ab,cd\}$, and $\{ab,ac\}$, and hence the rational functions $r_2$, $r_3$, and $r_4$ are not possible when there are only two symbols ($q=2$), thus there is nothing to prove in this case. Therefore we assume that $q\ge 3$. Here $r_1(z)<r_2(z)<r_3(z)<r_4(z)$, for all real $z>1$. We need to prove that $\rho(\G_1)<\rho(\G_2)<\rho(\G_3)<\rho(\G_4)$. By Theorems~\ref{thm:esc_rate} and~\ref{thm:same_roots}, this is equivalent to proving that $\lambda_1>\lambda_2>\lambda_3>\lambda_4$, where $\lambda_i$ is the largest real zero of $z-q+r_i(z)$.
	
	Note that for $i=1,\ldots,4$, the largest real pole of $F_i(z)$ is same as the largest real root of $p_i(z)$, where $p_1(z)=z^2-(q-1)z-(q-2)$, $p_2(z)=z^3-qz^2+2z-1$, $p_3(z)=z^3-(q-1)z^2-(q-2)z+1$, and $p_4(z)=z^2-qz+2$. For each $i$, $p_i(q-1)<0$ and $p_i(q)>0$. Hence, $p_i$ has a real root $\lambda_i\in (q-1,q)$. We prove that $\lambda_i$ is a simple root of $p_i$ with largest modulus. 
	
	Let us analyze each polynomial separately. Since $p_1(-1)>0$, there exists a real root inside $(-1,q-1)$. Thus $\lambda_1$ is simple and has largest modulus since $p_1$ is a degree 2 polynomial and $q\ge 3$. For $q\ge 4$, at $|z|=q-1$, note that $q|z|^2>|z|^3+2|z|+1$, and hence $p_2(z)$ has two roots inside the ball of radius $q-1$, by Rouch\'e's theorem. Thus $\lambda_2$ is the simple largest root of $p_2$. At $q=3$, it is immediate that $\lambda_2$ is the largest root of $p_2(z)=z^3-3z^2+2z-1$. Since $p_3(-1)<0$, $p_3(0)>0$, and $p_3$ is a degree 3 polynomial, $\lambda_3$ is the largest root of $p_3$. Finally, since $p_4(0)>0$ and $p_4$ is a degree 2 polynomial, $\lambda_4$ is the largest root of $p_4$.
	
	Note that $\lambda_2p_1(\lambda_2)=\lambda_2^3-(q-1)\lambda_2^2-(q-2)\lambda_2=\lambda_2^2-q\lambda_2+1 <0$, since $\lambda_2(\lambda_2^2-q\lambda_2+1)=1-\lambda_2<0$. Hence $p_1(\lambda_2)<0$ which implies $\lambda_2<\lambda_1$.
	Similarly $p_3(\lambda_2)>0$ implies $\lambda_3<\lambda_2$, and $p_3(\lambda_4)<0$ implies $\lambda_4<\lambda_3$, for $q\ge 3$. Thus we have the required result using Corollary~\ref{cor:e_rate}.
\end{proof}

\begin{table}
	\centering
	\caption{Escape rate values for $p=3$ and $q=2,3,4,5$. Here $r_{\G_i}(z)<r_{\G_{i+1}}(z)$, for all $z>4$.}
	\label{table:2}
	\begin{footnotesize}
		\begin{tabular}{c|c|c|cccc} 
			\hline
			&&&\multicolumn{4}{c}{$\rho(\G_i)$}\\
			& &$r_{\G_i}(z)$&$q=2$&$q=3$&$q=4$&$q=5$\\
			\hline
			$\G_1$&$\{aaa,bbb\}$/&\\& $\{aaa,aab\}$/&$\dfrac{2}{z^2+z+1}$&0.212&0.0579&0.0252&0.0133\\& $\{aaa,baa\}$& \\\hline
			
			$\G_2$&$\{aaa,aba\}$&$\dfrac{2z+1}{z^3+z^2+2z+1}$&0.212&0.0616 &0.0268&0.0141\\\hline
			
			$\G_3$& $\{abb,bba\}$&$\dfrac{2z^2-z-2}{z^4-z-1}$&0.212& 0.0645& 0.0275&0.01436\\\hline
			
			$\G_4$& $\{abb,bab\}$&$\dfrac{2z-1}{z^3+z-1}$&0.311&0.0664& 0.0278&0.0144\\\hline
			
			$\G_5$&$\{aaa,bab\}$/&$\dfrac{2z^2+z+2}{(z^2+z+1)(z^2+1)}$&0.311&0.0686&0.0284&0.01464 \\& $\{aaa,bcb\}$& \\\hline
			
			$\G_6$&$\{abc,bcb\}$&$\dfrac{2z^2-z+1}{z^4+z^2}$&\dots &0.0691 &0.0285&0.01465\\\hline
			
			$\G_7$&$\{aaa,abb\}/$&\\& $\{aaa,bba\}/$&$\dfrac{2z+1}{z(z^2+z+1)}$&0.412 &0.0700 &0.0286&0.01469\\&$\{aaa,abc\}$/&\\ &$\{aaa,bca\}$&\\\hline
			
			$\G_8$&$\{abb,aab\}$/& \\&$\{abc,bcd\}$/&$\dfrac{2z-1}{z^3}$&0.693&0.0729 &0.0293&0.0149\\ &$\{abc,bcc\}$&\\\hline
			
			$\G_9$&$\{abc,ddd\}$/&$\dfrac{2z^2+z+1}{z^2(z^2+z+1)}$&\dots& 0.0746 &0.0295&0.0150 \\ &$\{abc,bbb\}$&\\\hline
			
			$\G_{10}$&$\{aba,aca\}$&$\dfrac{2}{z^2+2}$&\dots &0.0703& 0.0297&0.0154\\\hline
			
			$\G_{11}$&$\{abb,aba\}$/&\\ &$\{abc,cdc\}$/&$\dfrac{2}{z^2+1}$&0.693&0.0800&0.0317&0.160\\ &$\{abc,cba\}$/&\\ &$\{abc,cda\}$ &\\\hline
			
			$\G_{12}$&$\{abc,cde\}$ /&$\dfrac{2z^2-1}{z^4}$&\dots&0.0857 &0.0328 &0.01632\\ &$\{abc,cbb\}$ &\\\hline
			
			$\G_{13}$&$\{abc,ded\}$/&$\dfrac{2z^2+1}{z^4+z^2}$&\dots&\dots &0.0329&0.01634 \\ &$\{abc,dad\}$ & \\\hline
			
			$\G_{14}$&$\{abc,def\}$/&\\ &$\{abc,dcb\}$/&$\dfrac{2}{z^2}$&\dots&\dots &0.0340&0.0167\\ &$\{abc,dee\}$ &\\ \hline	\end{tabular}
	\end{footnotesize}
\end{table}

\begin{table}[h]
	\centering
	\caption{Escape rates corresponding to few collections of three words each of length five ($p=5$). Here $r_{\G_i}(z)<r_{\G_{i+1}}(z)$, for all real $z>3$.} 
	\label{table:1general}
	\begin{footnotesize}
		\begin{tabular}{c|c|c|cc} 
			\hline
			$i$ & $\G_i$&$r_{\G_i}(z)$&\multicolumn{2}{c}{$\rho(\G_i)$}\\
			&& &$q=2$&$q=3$\\\hline
			1&$\{abbbb,bbbba,bbbbb\}$&$\dfrac{-2 - z - z^2 - z^3 + 3 z^4}{-\sum_{j=0}^3 z^j + \sum_{j=4}^8 z^j}$&0.037& 0.0071\\\hline
			2&$\{aaaaa,ababa,babab\}$&$ \dfrac{2 + z + 3 z^5}{z^5+\sum_{j=0}^9 z^j  }$&0.058& 0.0085\\\hline
			3&$\{aaaaa,bbbbb,ccccc\}$&$\dfrac{3}{ \sum_{j=0}^4 z^j}$&\dots & 0.0086\\\hline
			4&$\{aaaaa,abaaa,bbbbb\}$&$\dfrac{2 + 5 z + 4 z^2 + 4 z^3 + 4 z^4 + 3 z^5}{(\sum_{j=0}^4 z^j)+(1 + 2 z + z^2 + z^3 + z^4 + z^5)}$&0.070& 0.0095\\\hline
			5&$\{bbabb,bbbab,bbbba\}$&$\dfrac{-2 + 3 z}{-2 + z + z^2 + z^5}$&0.072& 0.0096 \\\hline		
			6&$\{abbbb,bbbba,bbabb\}$&$\dfrac{-2 - 3 z + z^3 - 3 z^4 - z^5 + 3 z^6}{-1 - 2 z - z^2 - 2 z^4 - 2 z^5 + z^6 + z^7 + z^{10}}$&0.060& 0.0097\\
			\hline
		\end{tabular}
	\end{footnotesize}
	
\end{table}

Next we prove a series of lemmas required to obtain the main result Theorem~\ref{thm:esc_r_2}, which relates the escape rate and the rational function $r(z)$ when a hole corresponds to a collection of two words of identical length. Let us first some few notations. Let $u,w$ be two distinct words both of length $p\ge 2$ with symbols from $\Sigma=\{0,1,\ldots,q-1\}$, and let $\G=\{u,w\}$.
Define
\begin{eqnarray*}
	a(z):=(u,u)_z, \ b(z):=(w,u)_z, \ c(z):=(u,w)_z,\ d(z):=(w,w)_z,\\
	\D(z):=a(z)d(z)-b(z)c(z), \ \mathcal{S}(z):=a(z)+d(z)-b(z)-c(z).
\end{eqnarray*}
Then the generating function $F(z)$ corresponding to the collection $\G$, as described in Theorem~\ref{thm:formF}, is given by 
\begin{eqnarray}\label{eq:F}
	F(z)=\dfrac{z}{z-q+r(z)}=\dfrac{z\D(z)}{(z-q)\D(z)+\mathcal{S}(z)},
\end{eqnarray}
where $r(z)=\dfrac{\mathcal{S}(z)}{\D(z)}$ is the rational function given by the sum of entries of $\M(z)^{-1}$, where $\M(z)=\begin{pmatrix}
	a(z)&b(z)\\c(z)&d(z)
\end{pmatrix}$.

Observe that $a,b,c,d$ are polynomial functions with coefficients either 0 or 1. The degree of both $a(z)$ and $d(z)$ is $p-1$, whereas the degree of both $b(z)$ and $c(z)$ is at most $p-2$. Hence $\D(z)$ and $\mathcal{S}(z)$ have degree $2(p-1)$ and $p-1$, respectively. 

We show that for real $z$, $\D(z)>0$, and thus from Theorem~\ref{thm:same_roots}, the Perron root of the adjacency matrix for $\Sigma_\G$ is same as the largest real root of $(z-q)\D(z)+\mathcal{S}(z)$. The following two lemmas give the location of this root on the real line. 

\begin{lemma} \label{Lemma:1}
	For $p\ge 3$ and $q\ge 5$, the polynomial $(z-q)\D(z)+\mathcal{S}(z)$ has exactly one root outside the disk $\vert z\vert<4$. 
\end{lemma}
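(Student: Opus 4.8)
The plan is to count the roots of $P(z):=(z-q)\D(z)+\mathcal{S}(z)$ outside the disk $|z|<4$ by an application of Rouch\'e's theorem on the circle $|z|=4$, comparing $P$ against its leading term. First I would record the structure of $P$: since $\D$ has degree $2(p-1)$ with leading coefficient $1$ (as $a$ and $d$ are monic of degree $p-1$, and $b,c$ have degree at most $p-2$), the term $(z-q)\D(z)$ has degree $2p-1$ and is monic, while $\mathcal{S}(z)$ has degree only $p-1$. Thus $P$ is a monic polynomial of degree $2p-1$, and the statement ``exactly one root outside $|z|<4$'' is equivalent to showing that $P$ has exactly $2p-2$ roots inside $|z|<4$, counted with multiplicity. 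I would split $P$ as the dominant piece $D_0(z):=z\,\D(z)$ against the remainder $R(z):=-q\,\D(z)+\mathcal{S}(z)$, or alternatively compare against the single monomial $z^{2p-1}$; the cleaner choice will emerge from the size estimates, but the natural candidate for the dominant term on $|z|=4$ is $-q\,z\,\D(z)$'s companion, so I expect to group $P=(z-q)\D(z)+\mathcal{S}(z)$ and test $-q\,\D(z)$ as the dominant contribution since $q\ge 5$ is large relative to $|z|=4$.

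The key estimates are therefore bounds on $\D$ and $\mathcal{S}$ on the circle $|z|=4$. Because $a,b,c,d$ all have coefficients in $\{0,1\}$, I would bound $|b(z)|,|c(z)|\le \sum_{k=0}^{p-2}|z|^k = (4^{p-1}-1)/3$ on $|z|=4$, and split $a(z)=z^{p-1}+\tilde a(z)$, $d(z)=z^{p-1}+\tilde d(z)$ where the tails $\tilde a,\tilde d$ again have $\{0,1\}$-coefficients of degree at most $p-2$. This gives $|a(z)|,|d(z)|\ge 4^{p-1}-(4^{p-1}-1)/3$ from below and a matching upper bound, and lets me bound $|\D(z)|=|a d - bc|$ both above and below on $|z|=4$, in particular a lower bound of the form $|\D(z)|\ge c_p>0$. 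Likewise $|\mathcal{S}(z)|\le 4\cdot(4^{p-1}-1)/3$ crudely. To run Rouch\'e comparing $(z-q)\D(z)$ against $\mathcal{S}(z)$ on $|z|=4$, I need $|\mathcal{S}(z)|<|(z-q)\D(z)|$; using $|z-q|\ge q-4\ge 1$ and the lower bound on $|\D(z)|$, this reduces to a clean inequality in $p$ and $q$ that holds once $q\ge 5$. That would show $(z-q)\D(z)$ and $P(z)$ have the same number of zeros inside $|z|=4$.

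The final bookkeeping step is to count the zeros of $(z-q)\D(z)$ inside $|z|<4$. The factor $z-q$ contributes no zero there since $q\ge 5>4$, so I must show $\D(z)$ has exactly $2p-2$ of its $2p-2$ zeros inside $|z|<4$, i.e.\ all roots of $\D$ lie strictly inside the disk of radius $4$. Here I would again use Rouch\'e (or a direct Cauchy-type root bound) for $\D(z)=a(z)d(z)-b(z)c(z)=z^{2p-2}+(\text{lower order with }\{0,1\}\text{-bounded coefficients})$: on $|z|=4$ the leading $z^{2p-2}$ dominates the sum of all lower-order terms because $4^{2p-2}$ exceeds the geometric-series bound on the remaining coefficients, forcing all $2p-2$ roots of $\D$ inside $|z|<4$. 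Combining, $(z-q)\D$ and hence $P$ have exactly $2p-2$ roots inside $|z|<4$, leaving exactly one of the $2p-1$ roots of $P$ outside. \emph{The main obstacle} I anticipate is making the Rouch\'e inequality on $|z|=4$ genuinely uniform in $p\ge 3$: the crude $\{0,1\}$-coefficient bounds produce competing powers $4^{p-1}$ on both sides, so I must arrange the grouping so the dominant side carries the extra factor of $|z-q|\ge q-4$ (or an extra power of $z$), ensuring the inequality survives as $p\to\infty$ rather than degenerating; verifying the edge cases $p=3$ and $q=5$ by hand may be needed to anchor the general estimate.
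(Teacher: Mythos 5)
Your proposal is correct and follows essentially the same route as the paper: Rouch\'e's theorem on $\vert z\vert=4$ with $(z-q)\D(z)$ as the dominant term and $\mathcal{S}(z)$ as the perturbation, using the $\{0,1\}$-coefficient bounds to get $\vert\D(z)\vert\ge \vert z\vert^{2p-2}\left(1-\tfrac{2}{\vert z\vert-1}\right)>0$ and $\vert\mathcal{S}(z)\vert\le \tfrac{2}{\vert z\vert-1}\vert z\vert^{p-1}(\vert z\vert+1)$, so the ratio grows like $4^{p}$ and the uniformity in $p$ you worried about is automatic. One small correction: your parenthetical claim that $\D(z)=z^{2p-2}+(\text{lower order with }\{0,1\}\text{-bounded coefficients})$ is false (the coefficients of $ad-bc$ can be of size comparable to $p$), but this alternative is unnecessary since your lower bound $\vert a\vert\vert d\vert-\vert b\vert\vert c\vert>0$ already holds for all $\vert z\vert\ge 4$ and shows $z=q$ is the only zero of $(z-q)\D(z)$ there, which is exactly how the paper concludes.
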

\begin{proof}
	Since $a,b,c,d$ are polynomial functions with coefficients either 0 or 1, and the degree of both $a(z)$ and $d(z)$ is $p-1$, whereas the degree of both $b(z)$ and $c(z)$ is at most $p-2$,
	\[
	\vert a(z)\vert, \vert d(z)\vert \ge \vert z\vert^{p-1}-\sum_{i=0}^{p-2}\vert z\vert^i \ge \vert z\vert^{p-1} \dfrac{\vert z\vert-2}{\vert z\vert-1},\]
	\[ \vert c(z)\vert, \vert b(z)\vert \le  \sum_{i=0}^{p-2}\vert z\vert^i\le \dfrac{\vert z\vert^{p-1}}{\vert z\vert-1}.
	\]
	Hence, on the circle $\vert z\vert=4$,
	\begin{eqnarray*}
		\vert \D(z)\vert &\ge & \vert a(z)\vert\vert d(z)\vert-\vert c(z)\vert\vert b(z)\vert \ge \vert z\vert^{2p-2}\left(\dfrac{\vert z\vert-2}{\vert z\vert-1}\right)^2-\dfrac{\vert z\vert^{2p-2}}{(\vert z\vert -1)^2}\\
		&=&\vert z\vert^{2p-2} \left(1-\dfrac{2}{\vert z\vert-1}\right) = \dfrac{1}{48} 4^{2p}>0,\\
		\vert \mathcal{S}(z)\vert&\le& \vert a(z)\vert+\vert d(z)\vert+\vert c(z)\vert+\vert b(z)\vert\le  \dfrac{2}{\vert z\vert -1}\vert z\vert^{p-1}\left(\vert z \vert+1\right)= \dfrac{5}{6} 4^p.
	\end{eqnarray*}
	Since $q\ge 5$ and $p\ge 3$, on the circle $\vert z\vert=4$,
	\begin{eqnarray*}
		\dfrac{\vert z-q\vert\vert \D(z)\vert}{\vert \mathcal{S}(z)\vert}\ge \dfrac{\vert \D(z)\vert}{\vert \mathcal{S}(z)\vert}\ge \dfrac{1}{40}4^p>1.
	\end{eqnarray*}
	Hence $\vert z-q\vert\vert \D(z)\vert>\vert \mathcal{S}(z)\vert$, on $\vert z\vert=4$. By Rouch\'e's theorem, both $(z-q)\D(z)$ and $(z-q)\D(z)+\mathcal{S}(z)$, being of identical degrees, have the same number of zeros in the region $\vert z\vert\ge 4$.
	
	Since $\vert \D(z)\vert>0$, $z=q$ is the only zero of $(z-q)\D(z)$, for $\vert z\vert\ge 4$, which gives the desired result.
\end{proof}

Let $\lambda\ge 4$ denote the unique real root of $(z-q)\D(z)+\mathcal{S}(z)$ obtained in Lemma~\ref{Lemma:1}. By Theorem~\ref{thm:same_roots}, $\lambda$ is same as the Perron root of the adjacency matrix of $\Sigma_\G$, since $\D(\lambda)> 0$.
By the Perron-Frobenius theorem, $\lambda\le q$. The following lemma discusses how close the Perron root $\lambda$ is to $q$. 

\begin{lemma}\label{Lemma:2}
	For $p\ge 3$ and $q\ge 5$, $q-\lambda=O(q^{-p+2})$.
\end{lemma}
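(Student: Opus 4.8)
```latex
\noindent\textbf{Proof proposal for Lemma~\ref{Lemma:2}.}
The plan is to estimate $q-\lambda$ directly from the defining equation $(\lambda-q)\D(\lambda)+\mathcal{S}(\lambda)=0$, which rearranges to
\begin{equation}\label{eq:prop-basic}
	q-\lambda=\dfrac{\mathcal{S}(\lambda)}{\D(\lambda)}=r(\lambda).
\end{equation}
So the entire problem reduces to bounding the rational function $r(z)=\mathcal{S}(z)/\D(z)$ at the specific point $z=\lambda$, and the claimed estimate $q-\lambda=O(q^{-p+2})$ will follow if I can show $r(\lambda)=O(q^{-p+2})$. The key structural facts I would exploit are the degree counts already recorded in the excerpt: $\mathcal{S}$ has degree $p-1$ while $\D$ has degree $2(p-1)=2p-2$, so heuristically $r(z)\sim z^{(p-1)-(2p-2)}=z^{-(p-1)}$ for large $z$. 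Since from Lemma~\ref{Lemma:1} and the Perron--Frobenius bound we know $4\le\lambda\le q$, if I can further pin $\lambda$ near $q$ — say show $\lambda\ge q-1$ or even $\lambda=q+O(1)$ — then plugging $\lambda\asymp q$ into this asymptotic gives $r(\lambda)=O(q^{-(p-1)})$, which is even slightly stronger than the stated $O(q^{-p+2})$; the extra factor of $q$ in the statement gives comfortable slack.

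First I would make the degree heuristic rigorous by reusing the two-sided polynomial bounds from the proof of Lemma~\ref{Lemma:1}. Those give, for real $z\ge 4$, lower bounds on $|\D(z)|$ of order $z^{2p-2}$ (explicitly $\D(z)\ge z^{2p-2}\bigl(1-\tfrac{2}{z-1}\bigr)>0$) and upper bounds on $|\mathcal{S}(z)|$ of order $z^{p-1}$ (explicitly $\mathcal{S}(z)\le \tfrac{2}{z-1}z^{p-1}(z+1)$). Dividing these yields a clean bound of the form
\begin{equation}\label{eq:prop-rbound}
	0<r(z)=\dfrac{\mathcal{S}(z)}{\D(z)}\le \dfrac{C}{z^{p-1}}
\end{equation}
for an absolute constant $C$ and all real $z\ge 4$. (One must check $\mathcal{S}(z)>0$ for real $z\ge 4$ as well, so that $q-\lambda\ge 0$ is consistent; this again follows from the same estimates since the positive $a+d$ terms dominate $b+c$.)

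Next I need to insert the value $z=\lambda$ into \eqref{eq:prop-rbound}, which requires a lower bound on $\lambda$ in terms of $q$. The cleanest route: combine \eqref{eq:prop-basic} and \eqref{eq:prop-rbound} in a bootstrapping step. We have $q-\lambda=r(\lambda)\le C\lambda^{-(p-1)}$; since $\lambda\ge 4$ and $p\ge 3$, the right-hand side is at most $C\cdot 4^{-(p-1)}\le C/16$, a bounded quantity, so immediately $\lambda\ge q-C/16$. Feeding this improved lower bound $\lambda\ge q-C/16\ge \tfrac{q}{2}$ (valid for $q$ large, which is the regime of an $O(\cdot)$ statement) back into \eqref{eq:prop-rbound} gives
\begin{equation}\label{eq:prop-final}
	q-\lambda=r(\lambda)\le \dfrac{C}{\lambda^{p-1}}\le \dfrac{C}{(q/2)^{p-1}}=\dfrac{C\,2^{p-1}}{q^{p-1}},
\end{equation}
which is $O(q^{-(p-1)})=O(q^{-p+2})$ as required, the constant $C\,2^{p-1}$ depending only on $p$.

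The main obstacle I anticipate is not the asymptotic order itself but getting the polynomial estimates on $\mathcal{S}$ and $\D$ to hold as two-sided bounds for \emph{real} $z$ in the full range $[4,q]$ rather than merely on the circle $|z|=4$ where Lemma~\ref{Lemma:1} established them; the triangle-inequality bounds used there degrade the leading coefficient, so I would want the sharper observation that for real $z>1$ the auto-correlation terms $a(z),d(z)$ are genuinely monic of degree $p-1$ with nonnegative coefficients, forcing $\D(z)=a(z)d(z)-b(z)c(z)$ to have leading term $z^{2p-2}$ with the cross terms strictly lower order. Making the constant $C$ uniform in $p$ (so that the final bound is honestly $O(q^{-p+2})$ with the implied constant independent of $p$, if that is intended) is the one place where care with the geometric-series tails $\sum z^i$ is needed; if instead the implied constant is allowed to depend on $p$, the argument above already suffices verbatim.
```
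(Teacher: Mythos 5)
Your proof is correct for the lemma as stated, but it takes a genuinely different route from the paper. The paper does not rearrange the root equation at all: it evaluates the polynomial $(q-x)\D(x)-\mathcal{S}(x)$ at the two trial points $x=q$ (where it is negative because $\mathcal{S}(q)>0$) and $x=q-mq^{-p+2}$ with $m\in[0.8,1]$ (where a direct computation using $(1-x)^n\ge 1-nx$ shows it is positive), and concludes by a sign change that $q-q^{-p+2}<\lambda<q$. Your approach --- writing $q-\lambda=r(\lambda)$, bounding $r(z)\le C z^{-(p-1)}$ for real $z\ge 4$ from the same coefficient estimates, and bootstrapping once to get $\lambda\ge q-C/16$ --- is cleaner and actually yields the sharper order $O(q^{-p+1})$; your observation that for real $z$ the auto-correlations dominate termwise (so $\D(z)>0$ and $\mathcal{S}(z)>0$ on $[4,q]$, not merely on $|z|=4$) is exactly the right fix and is implicitly what the paper also relies on. The one thing your version does not deliver, and the paper's does, is the \emph{explicit} inequality $q-q^{-p+2}<\lambda<q$ with implied constant $1$, uniformly in $p\ge 3$ and $q\ge 5$: your final bound $C\,2^{p-1}q^{-(p-1)}$ (or $C(q-1)^{-(p-1)}$) carries a constant growing in $p$, so it gives $O(q^{-p+2})$ only for fixed $p$ as $q\to\infty$. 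Since Proposition~\ref{thm:esc_r_x} and Theorem~\ref{thm:esc_r_2} later invoke the specific interval $(q-q^{-p+2},q)$ rather than the $O$-statement, substituting your argument would require either recomputing the constants (your $C$ can be taken as small as $3$ for real $z\ge4$, which recovers the interval for $p=3$ but not for all $p$) or adjusting the interval used downstream; this is a caveat about how the lemma is consumed, not a gap in your proof of the statement itself.
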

\begin{proof}	
	Since $\mathcal{S}(q)=a(q)+d(q)-b(q)-c(q)>2q^{p-1}-2\dfrac{q^{p-1}-1}{q-1}>0$, at $z=q$, $(q-z)\D(z)-\mathcal{S}(z)=-\mathcal{S}(q)<0$. For real $x\ge 2$, 
	\begin{eqnarray*}
		(q-x)\D(x)-\mathcal{S}(x)&\ge& (q-x)\left[x^{2p-2}-\dfrac{x^{2p-2}}{(x-1)^2}\right]-2\dfrac{x^p}{x-1}\\
		&>& \dfrac{x^{p+1}}{(x-1)^2} ((q-x)(x-2)x^{p-2}-2).
	\end{eqnarray*}
	Moreover at $x=q-mq^{-p+2}$,
	\begin{eqnarray*}
		(q-x)(x-2)x^{p-2}-2 &=& mq^{-p+2} q^{p-2}(1-mq^{-p+1})^{p-2}(q-mq^{-p+2}-2)-2\\
		&\ge& m(1-m(p-2)q^{-p+1})(q-mq^{-p+2}-2)-2,
	\end{eqnarray*}
	for $p\ge 3$, $q\ge 5$, since $(1-x)^n\ge 1-nx$, for $x>0$ and $n\ge 1$. \\
	Further since for $p\ge 3$, $\dfrac{p-2}{q^{p-1}}<\dfrac{p-1}{5^{p-1}}<\dfrac{1}{8}$,
	\[
	m\left(1-m(p-2)q^{-p+1}\right)\left(q-mq^{-p+2}-2\right)-2>m\left(1-\dfrac{m}{8}\right)\left(3-\dfrac{m}{25}\right)-2.
	\]
	For all values of $m$ lying in the interval $[0.8,1]$, the last term is positive. Hence we have $(q-x)\D(x)-\mathcal{S}(x)>0$, for $x=q-mq^{-p+2}$. Thus
	\begin{eqnarray*}
		q-q^{-p+2}<	q-mq^{-p+2} <\lambda < q, 
	\end{eqnarray*}
	and hence the result follows.
\end{proof}

\begin{rem}
	An ancillary result of the preceding lemma is that $q-1<\lambda<q$.
\end{rem}

The above result leads to the following corollary which gives the existence of an arbitrarily large hole into which the escape rate is arbitrarily small.

\begin{cor}
	Let $q\ge 5$ and $x,y\in\Sigma^{\N}$. For any given $\epsilon>0$ and $\delta>0$, there exists a collection $\G$ of words such that $x,y\in H_{\G}$, $\mu(H_{\G})>1-\epsilon$, and $\rho(\G)<\delta$.
\end{cor}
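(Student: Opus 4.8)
The plan is to realise a hole that is simultaneously very large in measure yet very slow to escape into, by forbidding almost all long words while keeping the surviving subshift of nearly full entropy. Since $\Sigma^\N$ is the full shift, its Parry measure is uniform, so $\mu(C_u)=q^{-p}$ for every word $u$ of length $p$, and distinct length-$p$ cylinders are disjoint. Hence for a collection $\G$ of words of length $p$ we have $\mu(H_\G)=|\G|\,q^{-p}$, and by Theorem~\ref{thm:esc_rate}, $\rho(\G)=\ln q-h_{\text{top}}(\Sigma_\G)$. It therefore suffices to produce a $\G$ with $|\G|\,q^{-p}>1-\epsilon$, with $h_{\text{top}}(\Sigma_\G)>\ln q-\delta$, and with the length-$p$ prefixes of $x$ and $y$ lying in $\G$.

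First I would pin down the surviving subshift. For $s\ge 1$ let $Y_s\subset\Sigma^\N$ be the (aperiodic, irreducible) subshift obtained by forbidding the single block $0^s=\underbrace{0\cdots 0}_{s}$, and let $\lambda_s$ be its Perron root. Since $Y_s$ is a proper subshift of the full shift, $\lambda_s<q$, while $\lambda_s\uparrow q$ as $s\to\infty$; so I can fix $s$, depending only on $\delta$, with $\ln\lambda_s>\ln q-\delta/2$. The key observation is that if $\G_0$ is the set of \emph{all} words of length $p\ge s$ containing $0^s$ as a subword, then $\Sigma_{\G_0}=Y_s$ for every such $p$: a sequence avoids all of $\G_0$ exactly when no length-$p$ window contains $0^s$, i.e. when it avoids $0^s$. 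Thus $h_{\text{top}}(\Sigma_{\G_0})=\ln\lambda_s$ is pinned independently of $p$, while the hole grows with $p$. Writing $f_s(p)$ for the number of length-$p$ words avoiding $0^s$, we have $|\G_0|=q^p-f_s(p)$ with $f_s(p)=O(\lambda_s^{\,p})$, so $\mu(H_{\G_0})=1-f_s(p)q^{-p}\to 1$ as $p\to\infty$ precisely because $\lambda_s<q$.

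To force $x,y$ into the hole I would append their prefixes: set $\G=\G_0\cup\{u_x,u_y\}$ with $u_x=x_1\cdots x_p$, $u_y=y_1\cdots y_p$. Then $x\in C_{u_x}$ and $y\in C_{u_y}$, so $x,y\in H_\G$, while enlarging $\G_0$ only increases the hole, giving $\mu(H_\G)\ge\mu(H_{\G_0})>1-\epsilon$ once $p$ is large. It remains to control the entropy after adding two words of length $p$. Applying Theorem~\ref{thm:esc_rate} with $Y_s$ as the ambient irreducible subshift (take $\F=\{0^s\}$, $\G=\{u_x,u_y\}$) gives $h_{\text{top}}(Y_s)-h_{\text{top}}(\Sigma_\G)=\rho(\{u_x,u_y\};Y_s)$, the escape rate of $Y_s$ into the union of the two long cylinders $C_{u_x},C_{u_y}$. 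If $u_x$ or $u_y$ already contains $0^s$ it is redundant and the reduction is even smaller, so in all cases the loss is bounded by this two-cylinder escape rate.

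The one place needing genuine work is showing this reduction is small: the escape rate into a hole built from two cylinders based at words of length $p$ tends to $0$ as $p\to\infty$. This is exactly the analogue for $Y_s$ of the estimate $q-\lambda=O(q^{-p+2})$ of Lemma~\ref{Lemma:2}, obtained by the same route—applying Theorem~\ref{thm:same_roots} and Corollary~\ref{cor:e_rate} to $Y_s$ and perturbing its Perron root under removal of two length-$p$ cylinders, the correction being $O(\lambda_s^{-p}\,\mathrm{poly}(p))$, with the hypothesis $q\ge 5$ supplying the Rouché-type bound underlying Lemma~\ref{Lemma:2}. Granting this, the parameters are chosen in order: given $\delta$, fix $s$ with $\ln\lambda_s>\ln q-\delta/2$; then choose $p$ so large that simultaneously $\rho(\{u_x,u_y\};Y_s)<\delta/2$ and $f_s(p)q^{-p}<\epsilon$. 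For this $\G$ we obtain $x,y\in H_\G$, $\mu(H_\G)>1-\epsilon$, and $\rho(\G)=\ln q-h_{\text{top}}(\Sigma_\G)=\ln q-\ln\lambda_s+\rho(\{u_x,u_y\};Y_s)<\delta$, as required; everything outside the two-cylinder estimate is bookkeeping with disjoint cylinders and the entropy formula.
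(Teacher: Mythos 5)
Your construction is a genuinely different route from the paper's, and it has one real gap at exactly the step you flag. Everything up to that point is sound: $\Sigma_{\G_0}=Y_s$, $\mu(H_{\G_0})=1-f_s(p)q^{-p}\to 1$ since $\lambda_s<q$, and the decomposition $\rho(\G)=\bigl(\ln q-\ln\lambda_s\bigr)+\rho(\{u_x,u_y\};Y_s)$ via Theorem~\ref{thm:esc_rate} is correct. But the assertion that $\rho(\{u_x,u_y\};Y_s)\to 0$ as $p\to\infty$ is not "the analogue of Lemma~\ref{Lemma:2} for $Y_s$" in any citable sense. Lemma~\ref{Lemma:2} is proved for the \emph{full} shift with a reduced collection of \emph{two words of equal length}: the whole estimate lives inside the Guibas--Odlyzko identity $F(z)=z/((z-q)+r(z))$, where $q$ is the full alphabet size. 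To run the same argument for your hole you must work with the combined forbidden collection $\{0^s,u_x,u_y\}$ over the full shift: three words, two distinct lengths, and generically nonzero cross-correlations between $0^s$ and the prefixes $u_x,u_y$ (e.g.\ whenever $x$ or $y$ begins or ends its prefix with a run of zeros). The paper's closest results, Theorem~\ref{thm:diff-length} (two words of different lengths) and the $t$-word machinery of Section~\ref{sec:full_morethantwo} (all words of equal length), do not cover this configuration, and the Rouch\'e/Lagrange constants there depend on the word lengths, so the bound you need — uniform in $p$ for fixed $s$ — has to be re-derived. The fact is true, but it is a nontrivial piece of analysis that your proposal asserts rather than proves, and it is strictly harder than what the corollary requires.

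The paper avoids this entirely with a softer device: it keeps the hole equal to just the two prefix cylinders $H_{\G_p}=C_{u_p}\cup C_{w_p}$, so Lemma~\ref{Lemma:2} applies verbatim and gives $\rho(\G_p)=-\ln(\lambda_p/q)\to 0$; it then inflates the measure by passing to $\bigcup_{i=0}^{n}\sigma^{-i}H_{\G_p}$, which has measure $>1-\epsilon$ for large $n$ by ergodicity of $\sigma$ with respect to $\mu$, is still a finite union of cylinders, and — by \cite[Proposition 2.3.2]{BY} — has the \emph{same} escape rate as $H_{\G_p}$, since avoiding $\bigcup_{i=0}^{n}\sigma^{-i}H$ for $m$ steps is the same as avoiding $H$ for $m+n$ steps. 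If you want to salvage your approach, the cleanest fix is to borrow exactly that last step: replace your entropy-perturbation claim for $Y_s$ by the pullback-and-ergodicity argument, which converts "small hole, small escape rate" into "large hole, same escape rate" with no new spectral estimates.
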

\begin{proof}
	Let $x=x_1x_2\ldots$, $y=y_1y_2\ldots$, $\G_p=\{u_p=x_1x_2\ldots x_p,w_p=y_1y_2\ldots y_p\}$, and $\D_p(z)$ and $\mathcal{S}_p(z)$ be as in~\eqref{eq:F} such that the rational function $r_{p}(z)=\dfrac{\mathcal{S}_p(z)}{\D_p(z)}$.
	
	By Lemma~\ref{Lemma:2}, for $p\ge 3$, the largest root $\lambda_p$ of the polynomial $(z-q)\D_p(z)+\mathcal{S}_p(z)$ satisfies
	\[
	q-q^{-p+2}\le \lambda_p\le q.
	\]
	Hence $\rho(\G_p)=-\ln\left(\lambda_p/q\right)\rightarrow 0$, as $p\rightarrow\infty$. Choose $p$ large enough so that $\rho(\G_p)<\delta$. Since $\sigma$ is ergodic with respect to the Parry measure $\mu$, there exists $n$ such that $\mu\left(\bigcup_{i=0}^n\sigma^{-i}H_{\G_p}\right)>1-\epsilon$. Let $H_\G=\bigcup_{i=0}^n\sigma^{-i}H_{\G_p}$. Using~\cite[Proposition 2.3.2]{BY}, $\rho(\G)=\rho(\G_p)$. This completes the proof. 
\end{proof}

\noindent The next result will be used in the proof of Theorem~\ref{thm:esc_r_2}.

\begin{prop}\label{thm:esc_r_x}
	Let $p\ge 3, q\ge 5$, and $\G_1$, $\G_2$ be collections each having two words of length $p$. If $r_{\G_1}(x)<r_{\G_2}(x)$ for all $x\in (q-q^{-p+2},q)$, then $\rho(\G_1)<\rho(\G_2)$.
\end{prop}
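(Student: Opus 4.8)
The plan is to reduce the comparison of escape rates to a comparison of the largest real roots $\lambda_1$ and $\lambda_2$ of the polynomials $P_i(z)=(z-q)\D_i(z)+\mathcal{S}_i(z)$ associated to $\G_1$ and $\G_2$. By Corollary~\ref{cor:e_rate}, since the underlying space is the full shift (so $\theta=q$ for both), we have $\rho(\G_i)=-\ln(\lambda_i/q)$; hence $\rho(\G_1)<\rho(\G_2)$ is equivalent to $\lambda_2<\lambda_1$. Lemma~\ref{Lemma:1} guarantees that each $\lambda_i$ is the unique real root of $P_i$ in $|z|\ge 4$ and is simple, and Lemma~\ref{Lemma:2} localizes both roots to the interval $(q-q^{-p+2},q)$. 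So the entire comparison takes place on this short interval, which is precisely where the hypothesis $r_{\G_1}(x)<r_{\G_2}(x)$ is assumed to hold.

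First I would rewrite $P_i(z)=\D_i(z)\bigl[(z-q)+r_{\G_i}(z)\bigr]$, recalling from the discussion after~\eqref{eq:F} that $\D_i(z)>0$ for real $z$ (this positivity is used implicitly in the lemmas and should be invoked, or verified, here). Since $\D_i(\lambda_i)>0$, the sign of $P_i(x)$ for real $x$ near $\lambda_i$ agrees with the sign of $g_i(x):=(x-q)+r_{\G_i}(x)$. Thus $\lambda_i$ is exactly the largest real root of $g_i$, and on $(q-q^{-p+2},q)$ the function $g_i$ crosses zero at $\lambda_i$. The key comparison is then: evaluate $g_1$ at the root $\lambda_2$ of $g_2$. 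We have $g_1(\lambda_2)=(\lambda_2-q)+r_{\G_1}(\lambda_2)$ and $0=g_2(\lambda_2)=(\lambda_2-q)+r_{\G_2}(\lambda_2)$, so $g_1(\lambda_2)=r_{\G_1}(\lambda_2)-r_{\G_2}(\lambda_2)$. Since $\lambda_2\in(q-q^{-p+2},q)$ by Lemma~\ref{Lemma:2}, the hypothesis gives $r_{\G_1}(\lambda_2)<r_{\G_2}(\lambda_2)$, whence $g_1(\lambda_2)<0$.

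To conclude $\lambda_1>\lambda_2$ from $g_1(\lambda_2)<0$, I need to know that $g_1$ is increasing through its root, i.e.\ that $g_1$ is negative just to the left of $\lambda_1$ and positive just to the right. This follows from the root-counting in Lemma~\ref{Lemma:1}: $\lambda_1$ is the unique (hence sign-changing) real zero of $P_1$ outside $|z|<4$, and $P_1=\D_1\cdot g_1$ with $\D_1>0$, so $g_1$ changes sign from negative to positive at $\lambda_1$ and has no other real zero in $(4,\infty)$. Therefore $g_1(\lambda_2)<0$ forces $\lambda_2<\lambda_1$, which is the desired inequality.

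The main obstacle I anticipate is bookkeeping around the sign behavior rather than any deep estimate: one must be careful that $\lambda_2$ indeed lands in the interval $(q-q^{-p+2},q)$ where the hypothesis applies (guaranteed by Lemma~\ref{Lemma:2}, which requires $q\ge 5$ and $p\ge 3$ — exactly the hypotheses of the proposition), and that the sign of $g_1$ to the left of $\lambda_1$ is genuinely negative, which rests on the Rouché-based uniqueness in Lemma~\ref{Lemma:1} together with $\D_1>0$. Once these are in place, the argument is a clean one-line sign comparison $g_1(\lambda_2)=r_{\G_1}(\lambda_2)-r_{\G_2}(\lambda_2)<0$, with no further computation needed.
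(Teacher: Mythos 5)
Your argument is correct and is essentially the paper's own proof: both reduce to comparing the largest real roots $\lambda_1,\lambda_2$ of $(x-q)+r_{\G_i}(x)$ via Lemmas~\ref{Lemma:1} and~\ref{Lemma:2} and Corollary~\ref{cor:e_rate}, and both conclude by a single sign evaluation of one function at the other's root (you evaluate $(x-q)+r_{\G_1}(x)$ at $\lambda_2$, the paper evaluates $(q-x)-r_{\G_2}(x)$ at $\lambda_1$ — a mirror image of the same step). Your remark that the positivity of $\D_i$ on the relevant real interval should be made explicit is a fair, minor point; the paper relies on the same fact via the lower bound on $|\D(z)|$ for $|z|\ge 4$.
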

\begin{proof}
	Let $\D_i(x)$, $\mathcal{S}_i(x)$, $r_{\G_i}(x)=\dfrac{\mathcal{S}_i(x)}{\D_i(x)}$ be as defined earlier, for the collection $\G_i$, $i=1,2$. Let $\lambda_i\in(q-q^{-p+2},q)$ be the largest root of $(q-x)\D_i(x)-\mathcal{S}_i(x)$, for $i=1,2$. Then $q-\lambda_i-r_{\G_i}(\lambda_i)=0$, $i=1,2$.
	
	Since $r_{\G_1}(x)<r_{\G_2}(x)$, for all $x\in(q-q^{-p+2},q)$, we have, in particular, $r_{\G_1}(\lambda_1)<r_{\G_2}(\lambda_1)$ and $r_{\G_1}(\lambda_2)<r_{\G_2}(\lambda_2)$. Consider the equation $q-x-r_{\G_2}(x)$. At $x=\lambda_1$, 
	\begin{eqnarray*}
		q-\lambda_1-r_{\G_2}(\lambda_1)&=&q-\lambda_1-r_{\G_2}(\lambda_1)-\left(q-\lambda_1-r_{\G_1}(\lambda_1)\right)\\
		&=& r_{\G_1}(\lambda_1)-r_{\G_2}(\lambda_1)<0.
	\end{eqnarray*} 
	
	Since at $x=q-q^{-p+2}$, $q-x-r_{\G_2}(x)>0$, and $\lambda_2$ is the unique root of $q-x-r_{\G_2}(x)$ greater than $q-q^{-p+2}$, we obtain $\lambda_2<\lambda_1$. Hence $\rho(\G_1)<\rho(\G_2)$ using Corollary~\ref{cor:e_rate}.
\end{proof}

We now prove our main theorem which relates the escape rate and the rational function when the hole is a union of two cylinders based at words of identical length.

\begin{thm}\label{thm:esc_r_2}
	Let $p\ge 3$, $q>3p^2+2$, and $\G_1$, $\G_2$ be the collections each having two words of length $p$. If $r_{\G_1}(3p^2+2)<r_{\G_2}(3p^2+2)$, then $\rho(\G_1)<\rho(\G_2)$.
\end{thm}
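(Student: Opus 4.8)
The plan is to reduce the statement to Proposition~\ref{thm:esc_r_x}, which already delivers $\rho(\G_1)<\rho(\G_2)$ as soon as we know $r_{\G_1}(x)<r_{\G_2}(x)$ for \emph{every} $x\in(q-q^{-p+2},q)$. The hypothesis $q>3p^2+2$ forces $q\ge 3p^2+3\ge 5$ (so the proposition applies), and since $q^{-p+2}=q^{-(p-2)}\le q^{-1}<1$ for $p\ge 3$, we get $q-q^{-p+2}>q-1\ge 3p^2+2$. Hence $(q-q^{-p+2},q)\subset(3p^2+2,\infty)$, and it suffices to prove that the \emph{single-point} hypothesis $r_{\G_1}(3p^2+2)<r_{\G_2}(3p^2+2)$ propagates to the whole ray, i.e. $r_{\G_1}(x)<r_{\G_2}(x)$ for all $x\ge 3p^2+2$.

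Writing $r_{\G_i}=\mathcal{S}_i/\D_i$ as in~\eqref{eq:F}, I would study the difference through its numerator,
\[
r_{\G_1}(x)-r_{\G_2}(x)=\frac{\mathcal{S}_1(x)\D_2(x)-\mathcal{S}_2(x)\D_1(x)}{\D_1(x)\D_2(x)}=:\frac{N(x)}{\D_1(x)\D_2(x)}.
\]
The bound from the proof of Lemma~\ref{Lemma:1}, namely $\vert\D(z)\vert\ge\vert z\vert^{2p-2}\bigl(1-2/(\vert z\vert-1)\bigr)$, is positive for $\vert z\vert>3$ and only improves as $\vert z\vert$ grows, so $\D_1(x),\D_2(x)>0$ for all $x\ge 3p^2+2>3$. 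Thus the sign of $r_{\G_1}-r_{\G_2}$ equals the sign of the polynomial $N$, and the claim reduces to showing that $N$ has \emph{no} real zero in $[3p^2+2,\infty)$: the hypothesis gives $N(3p^2+2)<0$, and if $N$ has no larger real root then $N<0$ on the whole ray by continuity, which is exactly $r_{\G_1}<r_{\G_2}$ there.

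To locate the real zeros of $N$ I would invoke a Cauchy-type root bound, after settling two structural points. Since $\mathcal{S}_i$ has top term $2x^{p-1}$ (the two monic auto-correlations) while the cross-correlations have degree at most $p-2$, and $\D_i$ has top term $x^{2p-2}$, the degree-$(3p-3)$ parts of $\mathcal{S}_1\D_2$ and $\mathcal{S}_2\D_1$ both equal $2x^{3p-3}$ and cancel; hence $\deg N\le 3p-4$. Because all correlation coefficients are integers (indeed $0/1$), the actual leading coefficient of $N$ is a nonzero integer, so it has absolute value at least $1$ (the degenerate case $N\equiv0$ is ruled out by the strict hypothesis). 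If, in addition, every coefficient of $N$ has absolute value at most $3p^2+1$, then Cauchy's bound places all roots of $N$ strictly inside $\vert x\vert<1+(3p^2+1)=3p^2+2$, in particular no real root lies in $[3p^2+2,\infty)$, and the proof is complete.

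The main obstacle is precisely this last coefficient estimate. Expanding $N=2x^{p-1}(e_2-e_1)+(s_1-s_2)x^{2p-2}+(s_1e_2-s_2e_1)$, with $s_i:=\mathcal{S}_i-2x^{p-1}$ and $e_i:=\D_i-x^{2p-2}$, the dominant terms come from the convolution $s_1e_2-s_2e_1$, for which a naive count (using $\vert s_i\vert\le 2$ coefficientwise and $\vert e_i\vert\le 2p-1$ over $p-1$ summands) gives only a bound of order $8p^2$, which is too weak. To sharpen this down to $3p^2+1$ I would exploit the $0/1$ structure of the correlation coefficients together with the sparsity forced by the minimal periods---the second-highest term of an auto-correlation sits at degree $p-1-\tau$, and each cross-correlation has degree at most $p-2$ with vanishing top coefficient---so that far fewer products are simultaneously nonzero in each convolution. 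Carrying out this bookkeeping carefully, and checking that it is tight enough to produce exactly the constant $3p^2+2$, is where the genuine difficulty lies.
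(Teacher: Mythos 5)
Your overall strategy is exactly the paper's: reduce to Proposition~\ref{thm:esc_r_x} by showing the single-point inequality at $3p^2+2$ propagates to all $x\ge 3p^2+2$, pass to the integer-coefficient numerator $N=\mathcal{S}_1\D_2-\mathcal{S}_2\D_1$, and confine its roots to $\vert x\vert<3p^2+2$ via a Cauchy/Lagrange bound, for which the whole burden is the claim that every coefficient of $N$ has absolute value at most $3p^2+1$. But that claim is precisely what you leave unproved: your own naive count gives only $O(8p^2)$, and you defer the sharpening to unspecified ``bookkeeping'' involving the $0/1$ structure and the minimal periods. As written, the proof is incomplete at its only nontrivial step.

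The way the paper closes this gap does not use minimal-period sparsity at all; it uses a \emph{position-dependent} bound on the coefficients of $\D_i$. Since $\D_i=f_ig_i-h_ik_i$ with $f_i,g_i$ of degree $p-1$ and $h_i,k_i$ of degree at most $p-2$, all with coefficients in $\{0,1\}$, the coefficient of $x^\ell$ in $\D_i$ is bounded in modulus by the ``tent'' $\ell+1$ for $0\le\ell\le p-1$ and $2p-(\ell+1)$ for $p\le\ell\le 2p-2$ (the number of index pairs in the convolution), rather than by the uniform $2p-1$ you used. Convolving this tent against the coefficients of $\mathcal{S}_j$, which are bounded by $2$ over a window of length $p$, the worst window is the one centered at the peak of the tent and sums to at most $\tfrac{3p^2}{2}$ (respectively $\tfrac{3p^2+1}{2}$ for odd $p$); hence each coefficient of $\D_i\mathcal{S}_j$ is at most $\tfrac{3p^2+1}{2}$ in modulus and each coefficient of $N$ is at most $3p^2+1$. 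Your observations that $\D_i>0$ on the relevant ray, that $N$ has a nonzero integer leading coefficient (so the Lagrange bound applies with denominator at least $1$), and that $N\not\equiv 0$ by the strict hypothesis are all correct and match the paper; only the tent-shaped convolution estimate is missing, and without it the constant $3p^2+2$ is not justified.
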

\begin{proof}
	For $i=1,2$, let $\G_i=\{u_i,w_i\}$. Define $f_i(z):=(u_i,u_i)_z$, $g_i(z):=(w_i,w_i)_z$, $h_i(z):=(u_i,w_i)_z$, $k_i(z):=(w_i,u_i)_z$, $\D_i(z):=f_i(z)g_i(z)-h_i(z)k_i(z)$, $\mathcal{S}_i(z):=f_i(z)+g_i(z)-h_i(z)-k_i(z)$, and then the rational function $r_{\G_i}(z)=\dfrac{\mathcal{S}_i(z)}{\D_i(z)}$. 
	
	Let $\lambda_i$ be the largest real root of $(q-x)\D_i(x)-\mathcal{S}_i(x)$. We show that if $r_{\G_1}(3p^2+2)<r_{\G_2}(3p^2+2)$, then $r_{\G_1}(x)<r_{\G_2}(x)$ for all $x\ge 3p^2+2$. Proposition~\ref{thm:esc_r_x} will then imply that $\lambda_1>\lambda_2$, for all $q>3p^2+2$. 
	
	The zeros of $(s_1-s_2)(x)$ is same as the zeros of $(\D_2\mathcal{S}_1-\D_1\mathcal{S}_2)(x)$, which is a polynomial with integer coefficients. We show that the absolute value of each coefficient of $(\D_2\mathcal{S}_1-\D_1\mathcal{S}_2)(x)$ is bounded above by $3p^2+1$. Hence, by the Lagrange bound on the zeros of a polynomial, all the roots of $(s_1-s_2)(x)$ lie in the disc of radius $3p^2+2$.  
	
	Since $f_i,g_i,h_i,k_i$ are polynomials with coefficients either 0 or 1, the degree of $f_i,g_i$ is $p-1$, the degree of $h_i,k_i$ is at most $p-2$, if $\D_i(x)=\sum_{\ell=0}^{2p-2}a_{i,\ell} x^\ell$ and $\mathcal{S}_i(x)=\sum_{m=0}^{p-1}b_{i,m} x^m$, then
	\[ |a_{i,\ell}|\leq 
	\begin{cases} 
		\ell+1 & 0\leq \ell\leq p-1  \\
		2p-(\ell+1)    & p\leq \ell\leq 2p-2, 
	\end{cases}
	\]
	and $|b_{i,m}|\leq 2$ for $0\leq m\leq p-1$. Now, 
	\[
	\D_i\mathcal{S}_j(x)=\sum_{n=0}^{3p-3}\left(\sum_{\ell+m=n}a_{i,\ell}b_{j,m}\right)x^n.
	\]
	The maximum of the upper bounds for the coefficients $\sum_{\ell+m=n}a_{i,\ell}b_{j,m}$ is at $n=p+\dfrac{p}{2}-1$ when $p$ is even, and at $n=p+\dfrac{p-1}{2}-1$ when $p$ is odd, and the maximum values are $\dfrac{3p^2}{2}$ and $\dfrac{3p^2+1}{2}$, respectively. Hence, in general, for every $0\leq n\leq 3p-3$,
	\[
	\left|\sum_{l+m=n}a_{i,\ell}b_{j,m} \right | \leq \dfrac{3p^2+1}{2}.
	\]
	This implies that the absolute value of each coefficient of $\D_1\mathcal{S}_2-\D_2\mathcal{S}_1$ is bounded above by $3p^2+1$.\\
\end{proof}

\begin{rem}
	When the hole is a union of two cylinders based at words of identical length, then the cross-correlations between the words play an important role.
	
	For instance, consider $\G_1=\{012\}$, $\mathcal{H}_1=\{123\}$, $\G_2=\{102\}$, and $\mathcal{H}_2=\{333\}$. Here $\rho(\G_1)=\rho(\G_2)$ and $\rho(\mathcal{H}_1)>\rho(\mathcal{H}_2)$, but $\rho(\G_1\cup\mathcal{H}_1)<\rho(\G_2\cup\mathcal{H}_2)$.
\end{rem}
 So far in this section we have assumed that the hole is a union of two cylinders based at words of identical length. In the following result, we allow for a hole to be union of two cylinders based at words of different lengths.

\begin{thm}\label{thm:diff-length}
	Let $p_1,p_2\ge 3$, $q>3p_1p_2+2$, and $\G_1,\G_2$ be two collections of words each containing two words of lengths $p_1$ and $p_2$. If $r_{\G_1}(3p_1p_2+2)<r_{\G_2}(3p_1p_2+2)$, then $\rho(\G_1)<\rho(\G_2)$.
\end{thm}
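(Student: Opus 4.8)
The plan is to run the proof of Theorem~\ref{thm:esc_r_2} almost verbatim, the only genuinely new work being the coefficient bookkeeping forced by the two unequal lengths. Write $\G_i=\{u_i,w_i\}$ with $|u_i|=p_1$, $|w_i|=p_2$, put $P:=\min(p_1,p_2)$, $Q:=\max(p_1,p_2)$, and set $f_i(z):=(u_i,u_i)_z$, $g_i(z):=(w_i,w_i)_z$, $h_i(z):=(u_i,w_i)_z$, $k_i(z):=(w_i,u_i)_z$, $\D_i:=f_ig_i-h_ik_i$, $\mathcal{S}_i:=f_i+g_i-h_i-k_i$, so $r_{\G_i}=\mathcal{S}_i/\D_i$. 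Now $f_i,g_i$ are monic of degrees $p_1-1,p_2-1$ while $\deg h_i\le p_1-2$, $\deg k_i\le p_2-2$, so the degrees become \emph{asymmetric}: $\deg\D_i=p_1+p_2-2$ and $\deg\mathcal{S}_i=Q-1$. As before the proof splits into (i) locating the two Perron roots $\lambda_i$ near $q$, and (ii) proving $r_{\G_1}(x)<r_{\G_2}(x)$ on an interval containing both $\lambda_i$; an appeal to (the analogue of) Proposition~\ref{thm:esc_r_x} and Corollary~\ref{cor:e_rate} then gives $\lambda_1>\lambda_2$, i.e.\ $\rho(\G_1)<\rho(\G_2)$.

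For (i) I would re-run Lemmas~\ref{Lemma:1} and~\ref{Lemma:2} with $P$ in the role of $p$. On $|z|=4$ the estimates $|f_i|\ge|z|^{p_1-1}\tfrac{|z|-2}{|z|-1}$, $|g_i|\ge|z|^{p_2-1}\tfrac{|z|-2}{|z|-1}$, $|h_i|\le\tfrac{|z|^{p_1-1}}{|z|-1}$, $|k_i|\le\tfrac{|z|^{p_2-1}}{|z|-1}$ give $|\D_i(z)|\ge|z|^{p_1+p_2-2}\tfrac{|z|-3}{|z|-1}>0$ together with a matching upper bound for $|\mathcal{S}_i|$, so Rouch\'e's theorem isolates a single root of $(z-q)\D_i+\mathcal{S}_i$ outside $|z|<4$; it is real, equals the Perron root $\lambda_i$ (since $\D_i>0$ on the real axis), and the argument of Lemma~\ref{Lemma:2} pins it to $q-1<\lambda_i<q$. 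Since $q>3p_1p_2+2$ (and $q$ is an integer) forces $q-1\ge 3p_1p_2+2$, both $\lambda_i$ lie strictly above $3p_1p_2+2$.

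For (ii), the zeros of $r_{\G_1}-r_{\G_2}$ coincide with those of the integer polynomial $\D_2\mathcal{S}_1-\D_1\mathcal{S}_2$, and by the Lagrange bound it suffices to show that each of its coefficients is at most $3p_1p_2+1$ in absolute value; then all its real zeros have modulus below $3p_1p_2+2$, so $r_{\G_1}-r_{\G_2}$ keeps the sign it has at $3p_1p_2+2$, which is negative by hypothesis. The new point is that the symmetric count of Theorem~\ref{thm:esc_r_2} is too lossy here. Writing $\D_i=\sum_\ell a_{i,\ell}x^\ell$, $\mathcal{S}_i=\sum_m b_{i,m}x^m$, one still has the trapezoidal bound $|a_{i,\ell}|\le\min(\ell+1,\,P,\,p_1+p_2-1-\ell)$ of peak $P$; but using only $|b_{i,m}|\le 2$ would bound a coefficient of $\D_i\mathcal{S}_j$ by twice the largest sum of $Q$ consecutive trapezoid entries, and for $p_1\ne p_2$ this already exceeds $\tfrac{3p_1p_2+1}{2}$ (for $p_1=3,p_2=5$ it gives $26>23$). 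The fix is the sharper observation that for $m\ge P$ only the auto- and cross-correlations of the \emph{longer} word survive, so $|b_{i,m}|\le 1$ there, while $|b_{i,m}|\le 2$ only for $m\le P-1$. Feeding this weighted bound into $\sum_{\ell+m=n}a_{i,\ell}b_{j,m}$ restores $\big|\sum_{\ell+m=n}a_{i,\ell}b_{j,m}\big|\le\tfrac{3p_1p_2+1}{2}$ (with $p_1=p_2$ recovering the old estimate), hence each coefficient of $\D_2\mathcal{S}_1-\D_1\mathcal{S}_2$ is at most $3p_1p_2+1$.

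I expect the delicate step to be this last maximization, uniform in $p_1,p_2\ge 3$. I would carry it out by writing the weighted sum at index $n$ as $W_Q(n)+W_P(n)$, the sums of the trapezoid over the length-$Q$ and length-$P$ windows sharing right endpoint $n$, bounding $\max_n W_Q\le p_1p_2-\lfloor P^2/4\rfloor$ and $\max_n W_P$ by its plateau value, and then verifying $W_Q+W_P\le\tfrac{3p_1p_2+1}{2}$; this is where the case $Q\ge 2P-1$ (window fits the plateau, $\max W_P=P^2$) separates from the near-diagonal case $P\le Q<2P-1$, and the inequality is tight (e.g.\ $p_1=4,p_2=5$ nearly saturates it, giving the product bound $30\le 30.5$). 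With (i) and (ii) established, Proposition~\ref{thm:esc_r_x} — whose proof needs only $r_{\G_1}<r_{\G_2}$ on an interval containing both roots together with the relation $q-\lambda_i=r_{\G_i}(\lambda_i)$ — yields $\lambda_2<\lambda_1$, and Corollary~\ref{cor:e_rate} completes the argument.
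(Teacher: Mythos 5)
Your proposal follows the paper's own proof, which is itself only a four-bullet sketch re-running Lemma~\ref{Lemma:1}, Lemma~\ref{Lemma:2}, Proposition~\ref{thm:esc_r_x}, and the Lagrange bound with $3p_1p_2+2$ in place of $3p^2+2$. Your coefficient bookkeeping --- in particular the refined bound $|b_{i,m}|\le 1$ for $m\ge\min(p_1,p_2)$ and the resulting estimate $W_Q+W_P\le\tfrac{3p_1p_2+1}{2}$ --- is correct and supplies exactly the detail the paper omits at the one step where the unequal lengths genuinely matter.
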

\begin{proof}
	For a hole $H_\G$ corresponding to the collection $\G$ consisting of two words of lengths $p_1$ and $p_2$, let $a,b,c,d,\D,\mathcal{S}$ be as defined earlier. We follow the steps as in the proof of Theorem~\ref{thm:esc_r_2}. Without loss of generality, we assume that $p_1>p_2$. The following statements hold true for $p_1,p_2\ge 3$, $q\ge 5$.
	\begin{itemize}
		\item On $\vert z\vert=4$, $\dfrac{\vert z-q\vert\vert \D(z)\vert}{\vert \mathcal{S}(z)\vert}\ge \dfrac{1}{20}\dfrac{4^{p_1+p_2}}{4^{p_1}+4^{p_2}}>1$. Hence as in Lemma~\ref{Lemma:1}, the polynomial $(z-q)\D(z)+\mathcal{S}(z)$ has exactly one real positive root $\lambda>4$. 
		\item $q-\lambda=O(q^{-p_2+2})$.
		\item If $r_{\G_1}(x)<r_{\G_2}(x)$ for all $x\in (q-q^{-p_2+2},q)$, then $\rho(\G_1)<\rho(\G_2)$.
		\item The absolute value of all the zeros of $r_{\G_1}-r_{\G_2}$ has an upper bound equal to $3p_1p_2+2$.
	\end{itemize}
\end{proof}

\subsection{Relationship between the escape rate and minimal period of the hole}\label{subsec:esc_min}
We recall a result from~\cite{BY} which gives the relationship between the minimal period (Definition~\ref{def:min-period}) and the escape rate into a single cylinder on a full shift. 

\begin{thm}\cite[Theorem 4.0.8]{BY}
	Let $\G_1=\{u_1\}$ and $\G_2=\{u_2\}$, where $u_1$ and $u_2$ are words of identical length. If $\tau_{\G_1}<\tau_{\G_2}$, then $\rho(\G_1)<\rho(\G_2)$.
\end{thm}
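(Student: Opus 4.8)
The plan is to reduce the statement to Theorem~\ref{thm:one-word}, which already governs the escape rate of a single-cylinder hole through the value of the auto-correlation polynomial at $z=q$: of two equal-length words, the one with the larger value of $(w,w)_q$ has the smaller escape rate. It therefore suffices to show that the hypothesis $\tau_{u_1}<\tau_{u_2}$ forces $(u_1,u_1)_q>(u_2,u_2)_q$ for every $q\ge 2$, and then to apply Theorem~\ref{thm:one-word} with the roles of the two words interchanged.

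First I would invoke the structural description of the auto-correlation polynomial recorded in the remark following Definition~\ref{def:min-period}. Writing $p$ for the common length of $u_1$ and $u_2$, each $(u_i,u_i)_z$ is monic of degree $p-1$ with coefficients in $\{0,1\}$, and its second-highest nonzero term is exactly $z^{p-1-\tau_{u_i}}$ when $\tau_{u_i}\le p-1$ (the polynomial being simply $z^{p-1}$ when $\tau_{u_i}=p$). Since $\tau_{u_1}<\tau_{u_2}\le p$ we have $\tau_{u_1}\le p-1$ and $p-1-\tau_{u_1}>p-1-\tau_{u_2}$, so in the difference $(u_1,u_1)_z-(u_2,u_2)_z$ the leading terms $z^{p-1}$ cancel and the highest surviving monomial is $+z^{p-1-\tau_{u_1}}$, with coefficient $+1$.

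Next I would show this positive leading term dominates for all real $z\ge 2$, not merely asymptotically. Setting $k=p-1-\tau_{u_1}\ge 0$, the difference has the shape $z^{k}+\sum_{j=0}^{k-1}c_j z^{j}$ with each $c_j\in\{-1,0,1\}$, since it is a difference of $0/1$ coefficients. Bounding the tail by the worst case $c_j=-1$ and summing the geometric series gives, for $z\ge 2$,
\[
(u_1,u_1)_z-(u_2,u_2)_z\ \ge\ z^{k}-\frac{z^{k}-1}{z-1}\ =\ \frac{z^{k}(z-2)+1}{z-1}\ >\ 0 .
\]
Evaluating at $z=q\ge 2$ yields $(u_1,u_1)_q>(u_2,u_2)_q$, and Theorem~\ref{thm:one-word} then delivers $\rho(\G_1)<\rho(\G_2)$.

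The only genuine obstacle is the final dominance step: one must rule out the lower-order correlation coefficients reversing the sign of the difference at small $q$, since the conclusion is asserted for all $q\ge 2$ and not just for $q$ large. This is exactly where the $0/1$ nature of the coefficients is indispensable — it caps the negative tail by a geometric sum, and the elementary identity $z^{k}(z-1)-(z^{k}-1)=z^{k}(z-2)+1$ shows the lower bound is already positive at $z=2$ and remains so for every larger $z$. Everything else is a direct application of results stated earlier in the excerpt.
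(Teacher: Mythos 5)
Your argument is correct and follows the same route as the paper, which disposes of this theorem in one line by observing that $\tau_{\G_1}<\tau_{\G_2}$ forces $(u_1,u_1)_q>(u_2,u_2)_q$ and then citing Theorem~\ref{thm:one-word}. You have merely made explicit the geometric-series estimate behind that implication (valid for all $q\ge 2$ because the coefficients lie in $\{0,1\}$), which the paper leaves to the reader.
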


This result is immediate from Theorem~\ref{thm:one-word} since $\tau_{\G_1}<\tau_{\G_2}$ implies $(u_1,u_1)_q > (u_2,u_2)_q$, which in turn implies $\rho(\G_1)<\rho(\G_2)$. Theorem~\ref{thm:esc_min1} extends this result to the case when holes are unions of two cylinders with an extra assumption that the cross-correlation polynomials between the forbidden words are zero. With this extra assumption, the minimal period is same as the Poincar\'e recurrence time. Later we show that this assumption is necessary. 

\begin{thm}\label{thm:esc_min1}
	Let $\G_1=\{u_1,u_2\}$ and $\G_2=\{w_1,w_2\}$, where $u_1$, $u_2$, $w_1$, and $w_2$ are words of equal length $p$. Let $(u_i,u_j)_z=(w_i,w_j)_z=0$, for $1\le i\neq j\le 2$. For $p=2$ and $q\ge 2$, or for $p\ge 3$ and $q\ge 5$, if $\tau_{\G_1}<\tau_{\G_2}$, then $\rho(\G_1)<\rho(\G_2)$. 
\end{thm}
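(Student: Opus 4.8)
The plan is to reduce the statement to a pointwise comparison of the rational functions $r_{\G_1}$ and $r_{\G_2}$ and then feed this into the machinery already built in this section. Because the cross-correlations inside each collection vanish, Remark~\ref{rem:formF}(2) gives
\[
r_{\G_1}(z)=\frac{1}{(u_1,u_1)_z}+\frac{1}{(u_2,u_2)_z},\qquad
r_{\G_2}(z)=\frac{1}{(w_1,w_1)_z}+\frac{1}{(w_2,w_2)_z}.
\]
After reordering the words I may assume $\tau_{u_1}=\tau_{\G_1}=:s$ and $\tau_{\G_2}=:t$, so that $s<t$, $\tau_{u_2}\ge s$, and $\tau_{w_1},\tau_{w_2}\ge t$. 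For a length-$p$ word $v$ I will use the structural facts recalled after Definition~\ref{def:min-period}: the auto-correlation is monic of degree $p-1$ with $0/1$ coefficients, and its second-highest term has degree $p-1-\tau_v$. Writing $(v,v)_x=x^{p-1}+c_v(x)$, the tail $c_v$ has nonnegative coefficients and satisfies $c_v(x)\le (x^{p-\tau_v}-1)/(x-1)$ for $x>1$, and $c_v(x)\ge x^{p-1-\tau_v}$ whenever $\tau_v\le p-1$. For $p=2$ every period is $1$ or $2$, so $\tau_{\G_1}<\tau_{\G_2}$ forces $\tau_{\G_1}=1$, $\tau_{\G_2}=2$; then $r_{\G_2}(2)=1$ while $r_{\G_1}(2)\in\{2/3,5/6\}$, so $r_{\G_1}(2)<r_{\G_2}(2)$ and Theorem~\ref{thm:esc_r_1} finishes this case.

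For $p\ge 3$ and $q\ge 5$ I would show that $r_{\G_1}(x)<r_{\G_2}(x)$ for every $x\ge 4$. Since $q-q^{-p+2}>q-1\ge 4$, this covers the interval $(q-q^{-p+2},q)$, and Proposition~\ref{thm:esc_r_x} together with Corollary~\ref{cor:e_rate} then yields $\rho(\G_1)<\rho(\G_2)$. Placing the two reciprocals over a common denominator, the four auto-correlations being positive, the sign of $r_{\G_1}-r_{\G_2}$ equals the sign of the numerator $N(x)$. Setting $B=x^{p-1}$, $\sigma_u=c_{u_1}+c_{u_2}$, $\pi_u=c_{u_1}c_{u_2}$ and analogously for $\G_2$, a direct expansion cancels the cubic-in-$B$ terms and leaves
\[
N=B^2(\sigma_w-\sigma_u)+2B(\pi_w-\pi_u)+(\sigma_u\pi_w-\sigma_w\pi_u),
\]
so it is enough to prove $N(x)<0$ for $x\ge 4$.

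The estimate then runs as follows. The tail bounds give $\sigma_u\ge x^{p-1-s}$ (from $u_1$) while, using $t\ge s+1$, $\sigma_w\le \frac{2x}{x-1}\,x^{p-1-t}\le\frac23 x^{p-1-s}$ for $x\ge4$; hence $B^2(\sigma_w-\sigma_u)\le-\frac13 x^{3p-3-s}$. I would rewrite the remaining terms as $\pi_w(2B+\sigma_u)-\pi_u(2B+\sigma_w)$ and simply discard the nonpositive second summand. For the first, $2B+\sigma_u\le 3x^{p-1}$ and, since both periods of $\G_2$ exceed $s$, $\pi_w\le x^{2(p-t)}/(x-1)^2$, so that $\pi_w(2B+\sigma_u)\le\frac13 x^{3p-2t-1}$. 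The exponent gap is $(3p-3-s)-(3p-2t-1)=2t-s-2\ge s\ge1$, whence $x^{3p-3-s}\ge 4\,x^{3p-2t-1}$ for $x\ge4$, giving $N\le(-\frac43+\frac13)x^{3p-2t-1}<0$, as required.

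The delicate point, and what I expect to be the main obstacle, is the control of the product terms $\pi_u,\pi_w$. A naive term-by-term expansion of each reciprocal produces errors of order $x^{-(p-1)}$ that dominate the genuine leading difference of order $x^{-(p-1)-s}$, so the asymptotics alone are inconclusive at the moderate values $x\approx q$ with $q\ge5$. The argument is saved by two observations: a large $\pi_u$ only makes $N$ more negative and can be dropped, whereas $\pi_w$ is forced to be small because both minimal periods of $\G_2$ are at least $t\ge s+1\ge2$. This asymmetry creates an exponent gap of at least $s$ between the leading negative term and the surviving error, and the factor $x\ge4$ converts that gap into a fixed constant margin valid for all $x\ge 4$, not merely in the limit.
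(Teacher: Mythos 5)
Your proposal is correct, and its overall architecture coincides with the paper's: handle $p=2$ by reducing to the finitely many rational functions and Theorem~\ref{thm:esc_r_1}, and for $p\ge 3$, $q\ge 5$ show that $\tau_{\G_1}<\tau_{\G_2}$ forces $r_{\G_1}(x)<r_{\G_2}(x)$ for all $x\ge 4$ and invoke Proposition~\ref{thm:esc_r_x}. Where you genuinely diverge is in how that pointwise inequality is proved. The paper first uses $\tau_{w_1}\le\tau_{w_2}$ to replace $\frac{1}{(w_1,w_1)_x}+\frac{1}{(w_2,w_2)_x}$ by the lower bound $\frac{2}{(w_1,w_1)_x}$ and then splits into three cases according to whether $\tau_{u_2}=p$ or $\tau_{w_1}=p$, estimating a different explicit numerator in each case. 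You instead clear all four denominators at once, observe that the cubic terms in $B=x^{p-1}$ cancel, and reduce everything to the single symmetric-function identity $N=B^2(\sigma_w-\sigma_u)+2B(\pi_w-\pi_u)+(\sigma_u\pi_w-\sigma_w\pi_u)$, which you then bound uniformly: the term $-\pi_u(2B+\sigma_w)$ is discarded because it only helps, and $\pi_w$ is small because both periods in $\G_2$ exceed $\tau_{\G_1}$, producing an exponent gap of at least $2t-s-2\ge s\ge 1$ that the factor $x\ge 4$ converts into a fixed margin. I checked the identity and each estimate ($\sigma_u\ge x^{p-1-s}$, $\sigma_w\le\frac{2x}{x-1}x^{p-1-t}\le\frac23x^{p-1-s}$, $2B+\sigma_u\le 3x^{p-1}$, $\pi_w\le x^{2(p-t)}/(x-1)^2$), and they all hold, including in the boundary cases $\tau_{u_2}=p$ or $\tau_{w_i}=p$ where the relevant tails simply vanish; so your argument subsumes the paper's three cases in one computation. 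What your route buys is the elimination of the case analysis and a transparent identification of which terms are dangerous ($\pi_w$) versus harmless ($\pi_u$); what the paper's route buys is that it works directly with the reciprocals and stays closer to the form of $r(z)$ that generalizes to $t>2$ words in Theorem~\ref{thm:gen-period}, where your determinant-style expansion would become unwieldy.
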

\begin{proof} 
	It is easy to verify that the result holds true when $p=2$ and $q\ge 2$ since there are only four possible forms of rational functions as described in the proof of Theorem~\ref{thm:esc_r_1}, out of which only three have zero cross-correlation polynomials.
	
	Assume $p\ge 3,q\ge 5$. Using Proposition~\ref{thm:esc_r_x}, it is enough to show that $\tau_{\G_1}<\tau_{\G_2}$ implies $r_{\G_1}(x)<r_{\G_2}(x)$ for $x\in(q-q^{-p+2},q)$. We in fact prove that $r_{\G_1}(x)<r_{\G_2}(x)$ for all $x\ge 4$ (note that $q-q^{-p+2}\ge 4$). 
	
	Let $(u_i,u_i)_x=x^{p-1}+x^{p-1-\tau_{u_i}}+R_{u_i}(x)$, and $(w_i,w_i)_x=x^{p-1}+x^{p-1-\tau_{w_i}}+R_{w_i}(x)$, where $R_{u_i},R_{w_i}$ are the remainder terms having degrees less than $p-1-\tau_{u_i},p-1-\tau_{w_i}$, respectively, for $i=1,2$. Since all the cross-correlation polynomials between the words are zero, we have
	\[
	r_{\G_1}(x)=\dfrac{1}{(u_1,u_1)_x}+\dfrac{1}{(u_2,u_2)_x},\ \text{and } r_{\G_2}(x)=\dfrac{1}{(w_1,w_1)_x}+\dfrac{1}{(w_2,w_2)_x}.
	\]
	Without loss of generality, assume that $\tau_{u_1}\le\tau_{u_2}$, and $\tau_{w_1}\le\tau_{w_2}$ and hence $\tau_{\G_1}=\tau_{u_1}$ and $\tau_{\G_2}=\tau_{w_1}$. Since $\tau_{w_1}\le\tau_{w_2}$, we have $\dfrac{1}{(w_1,w_1)_x}\le\dfrac{1}{(w_2,w_2)_x}$. Hence for $x\ge 4$,
	
	\begin{eqnarray*}
		r_{\G_2}(x)- r_{\G_1}(x)&=&  \dfrac{1}{(w_1,w_1)_x}+\dfrac{1}{(w_2,w_2)_x}-\dfrac{1}{(u_1,u_1)_x}-\dfrac{1}{(u_2,u_2)_x}\\
		&\ge& \dfrac{2}{(w_1,w_1)_x}-\dfrac{1}{(u_1,u_1)_x}-\dfrac{1}{(u_2,u_2)_x}.
	\end{eqnarray*}
	Since $\tau_{u_1}<\tau_{w_1}$, we have $1\le\tau_{u_1}\le p-1$. 
	Let us consider the following cases:\\
	
	\noindent \textit{Case 1:} $1\le\tau_{w_1},\tau_{u_2}\le p-1$. In this case we have $(u_1,u_1)_x\ge x^{p-1}+x^{p-1-\tau_{u_1}}$, $(u_2,u_2)_x \ge x^{p-1}+x^{p-1-\tau_{u_2}}$, and $(w_1,w_1)_x\le x^{p-1}+x^{p-1-\tau_{w_1}}+x^{p-1-(\tau_{w_1}+1)}+\ldots+1$. Hence
	\begin{eqnarray}\label{ineq:1}
		r_{\G_2}(x)-	r_{\G_1}(x)
		&\ge& 
		\dfrac{2}{x^{p-1}+x^{p-1-\tau_{w_1}}+x^{p-1-(\tau_{w_1}+1)}+\ldots+1}\nonumber \\
		& & -
		\dfrac{1}{x^{p-1}+x^{p-1-\tau_{u_1}}}-\dfrac{1}{x^{p-1}+x^{p-1-\tau_{u_2}}}>0,
	\end{eqnarray}	
	if and only if its numerator is positive. The numerator is  
	\begin{eqnarray*}
		&&2x^{2(p-1)}(1+x^{-\tau_{u_1}})(1+x^{-\tau_{u_2}})\\
		&&-x^{p-1}(x^{p-1}+x^{p-1-\tau_{w_1}}+\ldots+1)(2+x^{-\tau_{u_1}}+x^{-\tau_{u_2}})\\
		&>& x^{2(p-1)}\left(2(1+x^{-\tau_{u_1}})(1+x^{-\tau_{u_2}})-\left(1+\dfrac{x^{-(\tau_{w_1-1})}}{x-1}\right)\left(2+x^{-\tau_{u_1}}+x^{-\tau_{u_2}}\right)\right)\\
		&\ge& 
		x^{2(p-1)}x^{-\tau_{u_1}}\left(1+x^{-(\tau_{u_2}-\tau_{u_1})}+2x^{-\tau_{u_2}}-\dfrac{2+x^{-\tau_{u_1}}+x^{-\tau_{u_2}}}{x-1}\right),
	\end{eqnarray*}
	since $\tau_{u_1}\le\tau_{w_1}-1$. 
	Thus~\eqref{ineq:1} holds true if 
	\[
	(x-1)(1+x^{-(\tau_{u_2}-\tau_{u_1})}+2x^{-\tau_{u_2}})\ge (2+x^{-\tau_{u_1}}+x^{-\tau_{u_2}})
	\] 
	and this is true for all $x\ge 4$.\\
	
	\noindent \textit{Case 2:} $\tau_{u_2}=p$. In this case $(u_2,u_2)_x=x^{p-1}$. Repeating the same calculations as in Case 1, we obtain
	\begin{eqnarray*}
		r_{\G_2}(x)-	r_{\G_1}(x)
		&\ge& 
		\dfrac{2}{x^{p-1}+x^{p-1-\tau_{w_1}}+x^{p-1-(\tau_{w_1}+1)}+\ldots+1} \\
		& & -
		\dfrac{1}{x^{p-1}+x^{p-1-\tau_{u_1}}}-\dfrac{1}{x^{p-1}}>0,
	\end{eqnarray*}
	if and only if
	\begin{eqnarray*}
		&& x^{2(p-1)}\left(2(1+x^{-\tau_{u_1}})-\left(1+\dfrac{x^{-(\tau_{w_1}-1)}}{x-1}\right)(2+x^{-\tau_{u_1}})\right)\\
		&=&x^{2(p-1)}x^{-\tau_{u_1}}\left(1-\dfrac{2+x^{-\tau_{u_1}}}{x-1} \right)\ge 0,
	\end{eqnarray*}
	since $\tau_{u_1}\le\tau_{w_1}-1$. The last inequality holds true if
	$x-1\ge 2+x^{-\tau_{u_1}}$, and this inequality is satified for all $x\ge 4$. \\
	
	\noindent \textit{Case 3:} $\tau_{w_1}=p$. In this case we have $(w_1,w_1)_x=x^{p-1}$. Note that
	\begin{equation*}
		r_{\G_2}(x)-	r_{\G_1}(x)
		\ge
		\dfrac{1}{x^{p-1}}\left(2 -
		\dfrac{1}{1+x^{-\tau_{u_1}}}-\dfrac{1}{1+x^{-\tau_{u_2}}}\right)>0,
	\end{equation*}
	if $x^{-\tau_{u_1}}+x^{-\tau_{u_2}}+2x^{-(\tau_{u_1}+\tau_{u_2})}>0$, which is true for all $x>0$.
\end{proof}

\begin{rems}\label{rem:cross-nonzero}
	1) Two collections with the same minimal period can give two different escape rates. For instance, consider the collections $\G_1=\{u_1=aaaa,u_2=bbbb\}$ and $\G_2=\{w_1=aaaa,w_2=bcbc\}$. Here $(u_1,w_1)_z=(w_1,u_1)_z=(u_2,w_2)_z=(w_2,u_2)_z=0$, $\tau_{\G_1}=\tau_{\G_2}=1$, but $\rho(\G_1)\ne \rho(\G_2)$.\\
	2) Theorem~\ref{thm:esc_min1} fails to hold when a cross-correlation polynomial between the words is non-zero. For instance, for the collections $\G_1=\{u_1=abc,u_2=bcd\}$ and $\G_2=\{w_1=abc,w_2=ddd\}$, $(u_1,w_1)_z\ne 0$, $3=\tau_{\G_1}=3>\tau_{\G_2}=1$, but $\rho(\G_1)<\rho(\G_2)$. 
\end{rems}

\begin{rem}[small values of $q$ not covered in Proposition~\ref{thm:esc_r_x} and Theorem~\ref{thm:esc_r_2}] \label{rem:lower_q}
	In Table~\ref{table:2}, $p=3$; observe that $r_{\G_i}(x)<r_{\G_{i+1}}(x)$, for all $x>4$ and hence $\rho(\G_i)<\rho(\G_{i+1})$ for all $q\ge 5$, from Proposition~\ref{thm:esc_r_x}. 
	
	Consider the collections $\G_1$ and $\G_{2}$ in Table~\ref{table:2}. Observe that $r_{\G_1}(x)<r_{\G_2}(x)$ for all $x> 4-4^{-1}$ and $\rho(\G_1)<\rho(\G_2)$ for $q=4$. Also, from Proposition~\ref{thm:esc_r_x}, $\rho(\G_1)<\rho(\G_2)$ for all $q\ge 5$. Further, consider the collections $\G_9$ and $\G_{10}$ in Table~\ref{table:2}. For some $x_0\in (3,4-4^{-1})$, $r_{\G_9}(x_0)=r_{\G_{10}}(x_0)$. Here the relation between $r_{\G_9}(x)$ and $r_{\G_{10}}(x)$ changes as $x$ goes from 3 to 4. In fact, $\rho(\G_{10})<\rho(\G_{9})$ for $q=3$, but $\rho(\G_{9})<\rho(\G_{10})$, for all $q\ge 4$. 
	
	A similar situation can be observed for the collections $\G_5$ and $\G_6$ in Table~\ref{table:1general}. Here $\rho(\G_6)<\rho(\G_5)$ for $q=2$, but $\rho(\G_5)<\rho(\G_6)$, for all $q\ge 3$. 
	
	Also, in Table~\ref{table:2}, observe that the rational functions corresponding to the collections $\G_1$ and $\G_2$ are different but the escape rates are the same, for $q=2$. 
	
	Hence for small values of $q$, not covered in Proposition~\ref{thm:esc_r_x} and Theorem~\ref{thm:esc_r_2}, nothing can be said about the relationship between the escape rate and rational function $r(z)$. 
	
	Similar observations can be made about the relationship between the escape rate and the minimal period of the hole. We give an example where the consequence in Theorem~\ref{thm:esc_min1} does not hold when $q=2$. Consider $\G_1=\{10111011,01001000\}$ and $\G_2=\{11100111,00011000\}$, which have zero cross-correlations. Here $\tau_{\G_1}=4<5=\tau_{\G_2}$, but $\rho(\G_1)>\rho(\G_2)$ at $q=2$. However, numerics suggest that Theorem~\ref{thm:esc_min1} holds true for all $p\ge 3$ and $q=3,4$.
\end{rem}

\section{Full shift with union of more than two cylinders as hole}\label{sec:full_morethantwo}
In this section, we extend the results obtained in the previous section to the case of full shift when the holes are unions of $t\ge 2$ cylinders (in other words, holes corresponding to $t\ge 2$ forbidden words). In Section~\ref{subsec:esc_r_1}, we explore the relationship between the escape rate and the corresponding rational function $r(z)$, and in Section~\ref{subsec:gen_cor_period}, we discuss the relationship between the escape rate and the minimal period of the hole. 

\subsection{Relationship between the escape rate and the rational function $r(z)$} \label{subsec:esc_r_1}
We first prove an easy lemma which is used in proving Theorem~\ref{thm:gen}.
\begin{lemma}\label{lemma:bound}
	Let $f,g$ be two polynomials of degree $m,n$, respectively, with $m\le n$. Let $M,N$ be the maxima in modulus of the coefficients of $f$ and $g$, respectively. Then the moduli of the coefficients of $fg$ are bounded above by $(m+1)MN$.  
\end{lemma}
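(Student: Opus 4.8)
The plan is to write both polynomials explicitly in terms of their coefficients and then read off the coefficients of the product via the standard convolution (Cauchy product) formula. Writing $f(z)=\sum_{i=0}^{m} a_i z^i$ and $g(z)=\sum_{j=0}^{n} b_j z^j$, the product is $(fg)(z)=\sum_{k=0}^{m+n} c_k z^k$, where each coefficient is the convolution sum $c_k=\sum_{i+j=k} a_i b_j$. By the definition of $M$ and $N$ we have $|a_i|\le M$ for every $i$ and $|b_j|\le N$ for every $j$.

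The crux of the argument is to count how many terms can appear in the sum defining $c_k$. Since $f$ has degree $m$, the index $i$ is constrained to the range $\{0,1,\dots,m\}$, so there are at most $m+1$ admissible values of $i$, each of which determines $j=k-i$. This is precisely where the hypothesis $m\le n$ is used: it guarantees that the number of coefficients of $f$, namely $m+1$, is the binding constraint, rather than the larger count $n+1$ of coefficients of $g$. Hence every coefficient $c_k$ of the product is a sum of at most $m+1$ products of the form $a_i b_j$.

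Finally I would apply the triangle inequality together with the coefficient bounds to obtain, uniformly in $k$,
\[
|c_k| \;\le\; \sum_{i+j=k} |a_i|\,|b_j| \;\le\; (m+1)\,MN,
\]
which is the claimed estimate. There is no genuine obstacle here; the only point requiring care is to identify the number of summands correctly as $m+1$ (the smaller of the two coefficient counts) rather than overcounting, and this is exactly the role played by the assumption $m\le n$.
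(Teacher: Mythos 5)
Your proposal is correct and follows essentially the same argument as the paper: expand the product via the Cauchy convolution formula, observe that each coefficient $c_k$ has at most $m+1$ nonzero summands because the index from $f$ ranges over $\{0,\dots,m\}$, and conclude by the triangle inequality. No differences worth noting.
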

\begin{proof}
	Let $f(x)=\sum_{k=0}^mf_kx^k$ and $g(x)=\sum_{k=0}^ng_kx^k$ with $m\le n$. 
	We have that $M=\max_k |f_k|$ and $N=\max_k|g_k|$. \\
	Since $(fg)(x)=\sum_{k=0}^{m+n}\left(\sum_{i+j=k}f_ig_j\right)x^k$, we will find an upper bound to  $|\sum_{i+j=k}f_ig_j|$. Since $m\le n$, $k$ can vary from $0$ to $m$ and thus there are at most $m+1$ terms in each sum. Therefore $|\sum_{i+j=k}f_ig_j|\le (m+1)MN$, and hence the result follows. 
\end{proof}
\begin{thm}\label{thm:gen}
	Suppose that $\mathcal{G}_1$ and $\mathcal{G}_2$ are finite collections of $t$ words each of length $p$ with symbols from $\Sigma$. Then for $p\ge 3$, there exists a positive constant $D(t,p)$ such that for all $q>D$, if $r_{\mathcal{G}_1}(x)<r_{\mathcal{G}_2}(x)$ at $x=D$, then $\rho(\mathcal{G}_1)<\rho(\mathcal{G}_2)$.
\end{thm}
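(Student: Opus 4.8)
The plan is to mirror the structure of the proof of Theorem~\ref{thm:esc_r_2}, which handled the case $t=2$, and to isolate exactly where the number $2$ (number of words) entered so as to replace it by a general $t$. The key conceptual point is already supplied by Proposition~\ref{thm:esc_r_x} (and its obvious $t$-word analogue, which I would first state and prove by repeating the Rouch\'e argument of Lemma~\ref{Lemma:1} with the degree bookkeeping from Remark~\ref{rem:formF}(1)): for $q$ large the Perron root $\lambda_i$ of each collection lies in a small interval to the left of $q$, and the comparison $\rho(\G_1)<\rho(\G_2)$ reduces to the pointwise comparison $r_{\G_1}(x)<r_{\G_2}(x)$ on that interval. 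So the whole theorem reduces to a \emph{sign-stability} statement: if $r_{\G_1}(D)<r_{\G_2}(D)$ at the threshold $x=D$, then $r_{\G_1}(x)<r_{\G_2}(x)$ for all $x\ge D$. Since $r_{\G_i}=\mathcal{S}_i/\D_i$ with $\D_i>0$ on the relevant range, the sign of $r_{\G_1}-r_{\G_2}$ is the sign of the integer-coefficient polynomial $(\D_2\mathcal{S}_1-\D_1\mathcal{S}_2)(x)$, and it suffices to locate all real roots of this polynomial inside the disk of radius $D$.

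The main work, and the step I expect to be the genuine obstacle, is producing an explicit Lagrange-type bound $D(t,p)$ on the coefficients of $\D_2\mathcal{S}_1-\D_1\mathcal{S}_2$. For $t=2$ the determinant $\D_i$ was a simple $2\times2$ expression and the coefficient bound $3p^2+1$ came from a short convolution estimate. For general $t$, by Theorem~\ref{thm:formF} and Remark~\ref{rem:formF}(1), $r_{\G_i}(z)=\mathcal{S}_i(z)/\D_i(z)$ where $\D_i(z)=\det\M_i(z)$ is the determinant of the $t\times t$ correlation matrix (degree $t(p-1)$) and $\mathcal{S}_i(z)$ is the sum of cofactors (degree $(t-1)(p-1)$). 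Here I would control the coefficients of $\D_i$ and $\mathcal{S}_i$ by combining the Leibniz/cofactor expansion of the determinant with Lemma~\ref{lemma:bound}: each entry $(w_j,w_i)_z$ is a polynomial of degree $\le p-1$ with $0/1$ coefficients, so its coefficients are bounded by $1$; multiplying $t$ (resp.\ $t-1$) such factors and summing over the $t!$ permutations, and then applying Lemma~\ref{lemma:bound} inductively to each product, yields an explicit bound of the shape $C(t,p)$ on the coefficients of $\D_i$ and $\mathcal{S}_i$ that depends only on $t$ and $p$.

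From there the argument is mechanical. The coefficients of the product $\D_2\mathcal{S}_1$ (and of $\D_1\mathcal{S}_2$) are bounded, via Lemma~\ref{lemma:bound} applied with $f=\mathcal{S}_i$ of degree $(t-1)(p-1)$ and $g=\D_j$ of degree $t(p-1)$, by $\bigl((t-1)(p-1)+1\bigr)$ times the product of the two coefficient bounds; subtracting doubles this. Call the resulting quantity $D-1$, so that $D=D(t,p)$ is the corresponding Lagrange bound $1+\max_k|\text{coeff}_k|$. Every real root of $\D_2\mathcal{S}_1-\D_1\mathcal{S}_2$ then lies in $|x|<D$, so on $[D,\infty)$ the difference $r_{\G_1}-r_{\G_2}$ has constant sign; the hypothesis $r_{\G_1}(D)<r_{\G_2}(D)$ fixes that sign, giving $r_{\G_1}(x)<r_{\G_2}(x)$ for all $x\ge D$, and in particular on $(q-q^{-p+2},q)\subset[D,\infty)$ since $q>D$. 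Invoking the $t$-word version of Proposition~\ref{thm:esc_r_x} yields $\lambda_1>\lambda_2$ and hence $\rho(\G_1)<\rho(\G_2)$ by Corollary~\ref{cor:e_rate}. The delicate point to get right is that the constant $D(t,p)$ arising from the crude permutation-sum bound grows rapidly in $t$; one should remark that sharper bookkeeping (as in the $3p^2+2$ for $t=2$) gives a smaller $D$, but any explicit finite $D(t,p)$ suffices for the statement as written, which only asserts existence of such a constant.
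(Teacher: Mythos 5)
Your proposal is correct and follows essentially the same route as the paper: a Rouch\'e argument locating the Perron root in a short interval to the left of $q$ (the $t$-word analogue of Lemma~\ref{Lemma:1} and Proposition~\ref{thm:esc_r_x}), followed by coefficient bounds on $\D_i=\det\M_i$ and $\mathcal{S}_i$ obtained from the Leibniz/cofactor expansion together with Lemma~\ref{lemma:bound}, and a Lagrange root bound on $\D_2\mathcal{S}_1-\D_1\mathcal{S}_2$ to get sign stability of $r_{\G_1}-r_{\G_2}$ on $[D,\infty)$. The paper likewise first expresses $D$ through the maximal coefficients $a_i,b_i$ of $\D_i,\mathcal{S}_i$ and then bounds these by $p^{t-1}t!$ and $t(t!)p^{t-2}$, arriving at the explicit (admittedly crude) choice $D(t,p)=2t^2(t!)^2p^{2t-2}+1$, exactly as you anticipate.
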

\begin{proof}
	For a finite collection of $t$ words each of length $p$, let $\mathcal{M}(z)$ be the correlation matrix function. Let $r(z)$ be the sum of the entries of $\mathcal{M}^{-1}(z)$. Let $\D(z)$ be the determinant of $\mathcal{M}(z)$ and $\mathcal{S}(z)$ be the sum of the entries of the adjoint of $\mathcal{M}(z)$. Then $r(z)=\mathcal{S}(z)/\D(z)$. Also $\D(z)$ is a monic polynomial of degree $t(p-1)$ and $\mathcal{S}(z)$ is a polynomial of degree $(t-1)(p-1)$ with the leading coefficient $t$. Hence these polynomials can be expressed as 
	\[
	\D(z)=z^{t(p-1)}+\sum_{k=0}^{t(p-1)-1}a_kz^k,\ \ \mathcal{S}(z)=tz^{(t-1)(p-1)}+\sum_{k=0}^{(t-1)(p-1)-1}b_kz^k.
	\]
	If $a=\max_k\{|a_k|\}$ and $b=\max_k\{|b_k|,t\}$, then the proofs of Lemmas~\ref{Lemma:1} and~\ref{Lemma:2} can be imitated for obtaining the existence and position of the largest real root say $\lambda$, of   $(z-q)\D(z)+\mathcal{S}(z)$.
	
	First assume that $q>a+b+1$. Then on $|z|=a+b+1$, 
	\begin{equation*}
		\dfrac{|z-q||\D(z)|}{|\mathcal{S}(z)|}\ge \dfrac{|\D(z)|}{|\mathcal{S}(z)|}>|z|^{p-2}\dfrac{|z|-1-a}{b}= (a+b+1)^{p-2}>1,
	\end{equation*}
	for $p\ge 3$. Hence, as in Lemma~\ref{Lemma:1}, by Rouch\'e's theorem, $\lambda$ is the only zero of $(z-q)\D(z)+\mathcal{S}(z)$ on $|z|>a+b+1$. Since $|\D(z)|>0,$ $\lambda$ is the Perron root of the associated adjacency matrix. 
	
	Next, we prove that $q-1<\lambda<q$. Since
	\[
	\mathcal{S}(q)\ge tq^{(t-1)(p-1)}-\dfrac{bq^{(t-1)(p-1)-1}}{q-1}> q^{(t-1)(p-1)}\dfrac{t(q-1)-b}{q-1}>0, 
	\] 
	because $q>\dfrac{b}{t}+1$, we obtain $(q-x)\D(x)-\mathcal{S}(x)<0$, at $x=q$.
	
	Now at $x=q-1$,
	\begin{eqnarray*}
		(q-x)\D(x)-\mathcal{S}(x)\ge \dfrac{(q-1)^{(t-1)(p-1)+1}}{q-2}((q-2-a)(q-1)^{p-2}-b)>0,
	\end{eqnarray*}
	since $b\ge 2$, $p\ge 3$, and $q>a+b+1$ imply $(q-2-a)(q-1)^{p-2}-b>(b-1)(q-1)-b>(b-1)b-b\ge 0$.  
	
	Now we imitate the arguments in the proofs of Proposition~\ref{thm:esc_r_x} and Theorem~\ref{thm:esc_r_2}. By Lemma~\ref{lemma:bound}, the absolute values of the coefficients of $(\D_2\mathcal{S}_1-\D_1\mathcal{S}_2)$ are bounded above by $(a_1b_2+a_2b_1)((t-1)(p-1)+1)$, where for $i=1,2$, $a_i,b_i$ are the largest coefficients in modulus for $\D_i,\mathcal{S}_i$, respectively, as defined above. Thus, by Lagrange's bound on roots of a polynomial, all the zeroes of $r_{\G_1}-r_{\G_2}$ lie in the disc of radius $(a_1b_2+a_2b_1)((t-1)(p-1)+1)+1$. If we assume $D=D(t,p)=(a_1b_2+a_2b_1)tp+1$, then $r_{\G_1}(D)<r_{\G_2}(D)$ implies $r_{\G_1}(x)<r_{\G_2}(x)$, for all $x\ge D$. Hence for all $q>D$, $\rho(\G_1)<\rho(\G_2)$, the proof of which is similar to that of Proposition~\ref{thm:esc_r_x}.  
\end{proof}

\subsubsection{Upper bounds on $a$ and $b$ in terms of $p$ and $t$}
Let $\G=\{w_1,w_2,\ldots,w_t\}$ be a collection of words of length $p$.
The correlation matrix function is given by $\M(z)=[(w_j,w_i)_z]_{i,j}$, where $(w_i,w_i)_z$ has degree $p-1$ and $(w_i,w_j)_z$ has degree at most $p-2$, for $i\ne j$. The polynomial $\D(z)$ which is the determinant of $\M(z)$ can be written as 
\[
\D(z)=\sum_{\pi\in S_t}sgn(\pi)\left(\prod_{i=1}^t(w_{\pi i},w_i)_z\right),
\] where $S_t$ is the symmetric group on $t$ symbols. Choose $\pi\in S_t$. If $\pi$ fixes $j$ entries, then $\prod_{i=1}^t(w_{\pi i},w_i)_z$ is the product of $t$ polynomials, where $j$ of them have degree $p-1$, and the rest have degree at most $p-2$.  Also since each of these polynomials has coefficients either 0 or 1, the coefficients of $\prod_{i=1}^t(w_{\pi i},w_i)_z$ are bounded above by $(p-1)^{t-1}$, when $j=0$, and $p^{j-1}(p-1)^{t-j}$ otherwise (this is obtained by using inductive arguments in  Lemma~\ref{lemma:bound}). Let $k_t(j)$ denote the number of permutations in $S_t$ that fix exactly $j$ many symbols. Hence
\begin{equation}\label{eq:ubound}
	a\le k_t(0)(p-1)^{t-1}+\sum_{j=1}^t k_t(j)p^{j-1}(p-1)^{t-j}\le p^{t-1}\sum_{j=0}^t k_t(j)=p^{t-1}t!.
\end{equation}
For the purpose of finding an upper bound for $b$, we can assume that all the submatrices of $\mathcal{M}(z)$ of order $t-1$ are matrix functions where the diagonal entries have degree at most $p-1$ and the off-diagonal entries have degree at most $p-2$. Thus each entry of the adjoint matrix of $\M(z)$ is bounded above by $p^{t-2}(t-1)!$, using~\eqref{eq:ubound}. Hence 
\begin{equation}\label{eq:ubound2}
	b\le t^2p^{t-2}(t-1)!= t(t!)p^{t-2}.
\end{equation} 
Combining~\eqref{eq:ubound} and~\eqref{eq:ubound2}, we get 
$ab\le t(t!)^2p^{2t-3}$. 
Hence $D(t,p)$ can be chosen as $2t^2(t!)^2p^{2t-2}+1$. Using this method, for $t=2$, we obtain $D(2,p)=32p^2+1$, which is a much worse lower bound on $q$ than that obtained in Theorem~\ref{thm:esc_r_2}. 

\begin{rem}
	Theorem~\ref{thm:gen} may not hold true for small values of $q$ and $p$. One of the crucial steps in the proof is that the Perron root $\lambda$ is at least $q-1$. This is violated in the following example. Consider $q=3$, $\G=\{02,10,11,21,22\}$ ($p=2$), the Perron root is $\lambda=1.466$. 
\end{rem}

\subsection{Relationship between the escape rate and minimal period of the hole}\label{subsec:gen_cor_period}
In this section, we extend Theorem~\ref{thm:esc_min1} to the case when the hole corresponds to a union of $t\ge 2$ cylinders based at words of length $p\ge 2$. We obtain a lower bound on $q$ in terms of $t$. 

\begin{thm}\label{thm:gen-period}
	Suppose $t\ge 2$ and $q\ge 2$ satisfy 
	\begin{equation}\label{ineq:bound-q}
		(q-1)(t+1)-t q\left(1+\dfrac{1}{q}\right)^{t-1}\ge 0.
	\end{equation}	
	Let $\G_1=\{u_1,u_2,\ldots,u_t\}$ and $\G_2=\{w_1,w_2,\ldots,w_t\}$ be such that $u_i,w_i$ are words of equal length $p\ge 2$. Let $(u_i,u_j)_z=(w_i,w_j)_z=0$ for all $1\le i\neq j\le t$. If $\tau_{\G_1}<\tau_{\G_2}$, then $r_{\G_1}(q)<r_{\G_2}(q)$. 
	Inequality~\eqref{ineq:bound-q} holds true for $q\ge t 2^{t-1}+1$.
\end{thm}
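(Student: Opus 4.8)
The plan is to turn the statement into a single elementary inequality in $q$ and $t$ by exploiting the diagonal structure forced by the vanishing cross-correlations. Because $(u_i,u_j)_z=(w_i,w_j)_z=0$ for $i\ne j$, Remark~\ref{rem:formF}(2) gives
\[
r_{\G_1}(q)=\sum_{i=1}^{t}\frac{1}{(u_i,u_i)_q},\qquad r_{\G_2}(q)=\sum_{i=1}^{t}\frac{1}{(w_i,w_i)_q},
\]
so the whole problem becomes a comparison of two sums of reciprocals of auto-correlation values. The only per-word input I would use is the two-sided bound, valid for a length-$p$ word $w$ of minimal period $\tau_w\le p-1$ and real $q\ge 2$,
\[
q^{p-1}+q^{p-1-\tau_w}\le (w,w)_q\le q^{p-1}+\frac{q^{\,p-\tau_w}-1}{q-1},
\]
which follows from the remark after Definition~\ref{def:min-period} (the polynomial is monic of degree $p-1$ with next term $z^{p-1-\tau_w}$); for $\tau_w=p$ one has $(w,w)_q=q^{p-1}$.

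I would relabel so that $\tau_{u_1}=\tau_{\G_1}$ and $\tau_{w_1}=\tau_{\G_2}$ realize the minimal periods; then $\tau_{u_i}\ge\tau_{\G_1}$ and $\tau_{w_i}\ge\tau_{\G_2}$ for every $i$, and $\tau_{\G_1}\le p-1$ since $\tau_{\G_1}<\tau_{\G_2}\le p$. For the upper estimate of $r_{\G_1}(q)$ I keep only the genuine information about $u_1$ and discard the rest: using $(u_i,u_i)_q\ge q^{p-1}$ for $i\ge 2$ together with the lower bound above for $u_1$,
\[
r_{\G_1}(q)\le\frac{1}{q^{p-1}+q^{p-1-\tau_{\G_1}}}+\frac{t-1}{q^{p-1}}.
\]
For the lower estimate I use that $w_1$ has the smallest period in $\G_2$, hence the largest auto-correlation, so every term is at least $1/(w_1,w_1)_q$; combined with the upper bound on $(w_1,w_1)_q$,
\[
r_{\G_2}(q)\ge\frac{t}{q^{p-1}+\frac{q^{\,p-\tau_{\G_2}}-1}{q-1}}.
\]

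It then remains to show the lower estimate exceeds the upper one. After cancelling the common factor $q^{-(p-1)}$ and using the gap $\tau_{\G_2}\ge\tau_{\G_1}+1$ (so that $q^{1-\tau_{\G_2}}\le q^{-\tau_{\G_1}}$) to eliminate $\tau_{\G_2}$, the claim $r_{\G_1}(q)<r_{\G_2}(q)$ reduces to
\[
\frac{t(q-1)}{(q-1)+q^{-\tau_{\G_1}}}>\frac{1}{1+q^{-\tau_{\G_1}}}+(t-1).
\]
Writing $\beta=q^{-\tau_{\G_1}}\in(0,1/q]$, a short manipulation collapses this to $q>t+1+(t-1)\beta$, whose worst case $\beta=1/q$ (that is, $\tau_{\G_1}$ as large as allowed and $\tau_{\G_2}=\tau_{\G_1}+1$) is $q^{2}-(t+1)q-(t-1)\ge 0$. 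The stated hypothesis $(q-1)(t+1)-tq(1+1/q)^{t-1}\ge 0$ serves as a convenient sufficient condition for this; it simplifies to $q\ge 5$ when $t=2$, consistent with Theorem~\ref{thm:esc_min1}, and it holds whenever $q\ge t\,2^{t-1}+1$.

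The main obstacle is the upper estimate for $r_{\G_1}(q)$. The $t-1$ words other than $u_1$ may have minimal period as large as $p$, so each of their reciprocals can be as large as $q^{-(p-1)}$, and their total $(t-1)q^{-(p-1)}$ is of the same order as the entire lower bound for $r_{\G_2}(q)$; it must be beaten using only the single saving $q^{p-1-\tau_{\G_1}}$ contributed by $u_1$ being less periodic than $w_1$. Keeping this margin positive as $\tau_{\G_1}$ grows, so that the saving shrinks, is precisely what forces the lower bound on $q$ to grow with $t$, and is where the hypothesis is consumed. I would finish by checking the two boundary situations directly: the case $\tau_{\G_2}=p$, where the lower bound for $r_{\G_2}(q)$ degenerates to $t\,q^{-(p-1)}$ and the reduced inequality must be re-verified, and the automatic exclusion $\tau_{\G_1}\le p-1$ noted above, which guarantees that the lower bound used for $u_1$ is legitimate.
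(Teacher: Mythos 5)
Your argument is correct and shares the paper's skeleton --- reduce each $r_{\G_i}(q)$ to a sum of reciprocals of auto-correlations via the vanishing cross-correlations, lower-bound $r_{\G_2}(q)$ by $t\big/\bigl(q^{p-1}+\tfrac{q^{p-\tau_{\G_2}}-1}{q-1}\bigr)$, and exploit the unit gap $\tau_{\G_1}\le\tau_{\G_2}-1$ --- but your key estimate is genuinely different and in fact sharper. The paper keeps the correction term $q^{p-1-\tau_{u_i}}$ for \emph{every} word of $\G_1$, expands $\prod_j(1+q^{-\tau_{u_j}})$ into elementary symmetric functions $\beta_k$, bounds $\beta_kq^{\tau_{u_1}}\le {t\choose k}q^{-(k-1)}$, and sums the binomial series to land exactly on the stated hypothesis~\eqref{ineq:bound-q}; it then needs a separate case when some $\tau_{u_i}=p$. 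You instead discard the corrections for $i\ge 2$, using only $(u_i,u_i)_q\ge q^{p-1}$, which treats all cases uniformly and collapses the whole computation to $\beta\,[\,q-1-t-(t-1)\beta\,]\ge 0$ with $\beta=q^{-\tau_{\G_1}}\le 1/q$, i.e.\ to $q^2-(t+1)q-(t-1)\ge 0$. Since~\eqref{ineq:bound-q} forces $q\ge t^2+1$ (by Bernoulli's inequality) while your condition needs only $q\gtrsim t+2$, your route actually proves a stronger statement; and nothing is lost by the discard, because $\sum_i 1/(1+q^{-\tau_{u_i}})$ is maximized precisely when $\tau_{u_i}=p$ for $i\ge2$, so your worst case is the true worst case for this family of bounds. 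Two small finishing touches: the intermediate assertion that $w_1$ has the largest auto-correlation value can fail when two of the $w_i$ share the minimal period (the paper makes the same claim), but the bound you actually use, $(w_i,w_i)_q\le q^{p-1}+\tfrac{q^{p-\tau_{\G_2}}-1}{q-1}$ for every $i$, is valid regardless; and the theorem's final clause, that $q\ge t2^{t-1}+1$ implies~\eqref{ineq:bound-q}, is asserted but not checked --- it follows in one line from $(1+1/q)^{t-1}\le 1+(2^{t-1}-1)/q$.
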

\begin{proof} 
	Assume that $1\le\tau_{w_i},\tau_{u_i}<p$, $i=1,2,\ldots,t$. Let $(u_i,u_i)_q=q^{p-1}+q^{p-1-\tau_{u_i}}+R_{u_i}(q),(w_i,w_i)_q=q^{p-1}+q^{p-1-\tau_{w_i}}+R_{w_i}(q)$, where $R_{u_i},R_{w_i}$ are the reminder terms with degrees less than $p-1-\tau_{u_i},p-1-\tau_{w_i}$, respectively, for $i=1,\ldots,t$. Since all the cross-correlation polynomials between the words are zero, we have
	\[
	r_{\G_1}(q)=\dfrac{1}{(u_1,u_1)_q}+\dfrac{1}{(u_2,u_2)_q}+\ldots+\dfrac{1}{(u_t,u_t)_q},
	\]
	\[
	r_{\G_2}(q)=\dfrac{1}{(w_1,w_1)_q}+\dfrac{1}{(w_2,w_2)_q}+\ldots+\dfrac{1}{(w_t,w_t)_q}.
	\]
	Without loss of generality, we assume that $\tau_{u_1}\le\tau_{u_2}\le \ldots \le \tau_{u_t}$ and $\tau_{w_1}\le\tau_{w_2}\le \ldots \le \tau_{w_t}$. Furthermore, $\tau_{\G_1}<\tau_{\G_2}$ implies $\tau_{u_1}<\tau_{w_1}$. We consider the following three cases.\\ 
	
	Firstly, when $\tau_{w_1},\tau_{u_1},\ldots,\tau_{u_t}<p$, 
	\begin{eqnarray*}
		r_{\G_2}(q)-r_{\G_1}(q)&=&  \sum_{i=1}^t\dfrac{1}{(w_i,w_i)_q}- \sum_{i=1}^t\dfrac{1}{(u_i,u_i)_q}\\
		&\ge& \dfrac{t}{(w_1,w_1)_q}- \sum_{i=1}^t\dfrac{1}{q^{p-1}+q^{p-1-\tau_{u_i}}}\\
		&\ge& \dfrac{t}{q^{p-1}+\sum_{j=0}^{p-1-\tau_{w_1}}q^j}- \sum_{i=1}^t\dfrac{1}{q^{p-1}+q^{p-1-\tau_{u_i}}}.
	\end{eqnarray*}
	The last term in the above inequality is positive if and only if 
	\begin{eqnarray}\label{ineq:big}
		t \prod_{j=1}^t\left(1+q^{-\tau_{u_j}}\right)&>&
		\left(1+\dfrac{q^{-\tau_{u_1}}}{q-1}\right)\left(\sum_{i=1}^t\left( \prod_{j\ne i} (1+q^{-\tau_{u_j}}) \right)\right),
	\end{eqnarray}
	since $\tau_{u_1}\le\tau_{w_1}-1$. For $1\le j\le t$, let
	\[
	\beta_j=\sum_{\substack{1\le i_1\le \ldots\le i_j\le t}}q^{-\tau_{u_{i_1}}}\ldots q^{-\tau_{u_{i_j}}}.
	\]
	Then \[
	\prod_{j=1}^t\left(1+q^{-\tau_{u_j}}\right)=1+\beta_1+\ldots+\beta_t, \ \text{and}
	\]
	\[\sum_{i=1}^t\left( \prod_{j\ne i} (1+
	q^{-\tau_{u_j}}) \right)=t+(t-1)\beta_1+(t-2)\beta_2+\ldots+\beta_{t-1}.
	\]	
	\noindent Hence inequality\eqref{ineq:big} holds true if 
	\begin{eqnarray*}
		&& t\left(q-1\right)\left(1+\beta_1+\beta_2+\ldots+\beta_t\right) \\
		&& - \left(q-1+q^{-\tau_{u_1}}\right)\left(t+(t-1)\beta_1+(t-2)\beta_2+\ldots+\beta_{t-1}\right) \\
		&& =(q-1)(\beta_1+2\beta_2+\ldots+t\beta_{t})-q^{-\tau_{u_1}}(t+ (t-1)\beta_1+\ldots+\beta_{t-1})
		> 0.
	\end{eqnarray*}
	Since $1\le\tau_{u_1}\le\tau_{u_2}\le\ldots\le\tau_{u_t}$, we obtain
	\begin{eqnarray*}
		\beta_1&<&\beta_1q^{\tau_{u_1}}=1+q^{-(\tau_{u_2}-\tau_{u_1})}+\ldots+q^{-(\tau_{u_t}-\tau_{u_1})}\le t, \\
		\beta_2&<&\beta_2q^{\tau_{u_1}} = q^{-\tau_{u_2}}+\ldots+q^{-\tau_{u_t}}+q^{-(\tau_{u_2}+\tau_{u_3}-\tau_{u_1})}+\dots\\
		&& \dots +q^{-(\tau_{u_{t-1}}+\tau_{u_t}-\tau_{u_1})}
		\le  {t\choose 2}\dfrac{1}{q},\\
		&\vdots&\\
		\beta_k&<&\beta_kq^{\tau_{u_1}}\le {t\choose k} \dfrac{1}{q^{k-1}}.
	\end{eqnarray*}
	Since $\beta_1q^{\tau_{u_1}}>1,$ we have $(q-1)q^{\tau_{u_1}}(\beta_1+2\beta_2+\ldots+t\beta_{t})>q-1$.
	Hence
	\begin{eqnarray*}
		(q-1)q^{\tau_{u_1}}(\beta_1+2\beta_2+\ldots+t\beta_{t})-(t+ (t-1)\beta_1+\ldots+\beta_{t-1})\\
		>q-1-\left(t+\sum_{k=1}^{t-1}(t-k)\dfrac{1}{q^{k-1}}{t\choose k}\right).
	\end{eqnarray*}
	Now consider the term 
	\begin{equation*}
		\sum_{k=1}^{t-1}(t-k)\dfrac{1}{q^{k-1}}{t\choose k}=tq\sum_{k=1}^{t-1}{t-1\choose k}\dfrac{1}{q^{k}}
		=tq\left(\left(1+\dfrac{1}{q}\right)^{t-1}-1\right).
	\end{equation*} 
	Hence the inequality~\eqref{ineq:big} holds if~\eqref{ineq:bound-q} is satisfied. Also, the inequality~\eqref{ineq:bound-q} holds for $q\ge t 2^{t-1}+1$, since $\dfrac{1}{q^{k-1}}\le 1$, for any $1\le k\le t-1$.\\
	
	Secondly, when $\tau_{w_1}=p$, 
	\begin{eqnarray*}
		r_{\G_2}(q)-r_{\G_1}(q)&=&  \sum_{i=1}^t\dfrac{1}{(w_i,w_i)_q}- \sum_{i=1}^t\dfrac{1}{(u_i,u_i)_q}\\
		&\ge& \dfrac{t}{(w_1,w_1)_q}- \sum_{i=1}^t\dfrac{1}{q^{p-1}+q^{p-1-\tau_{u_i}}}\\
		&\ge& \dfrac{t}{q^{p-1}}- \sum_{i=1}^t\dfrac{1}{q^{p-1}+q^{p-1-\tau_{u_i}}}>0.
	\end{eqnarray*} \\
	
Finally, we consider the final case where $1\le \tau_{u_1},\tau_{u_2},\ldots,\tau_{u_\ell}<p$, and $\tau_{u_{\ell+1}}=\ldots=\tau_{u_t}=p$, for some $1\le \ell\le t$. With the same calculations as before, the result holds if 
	\[
	q-1-\left(t+\sum_{k=1}^{\ell-1}(t-k)\dfrac{1}{q^{k-1}}{t\choose k}\right)>q-1-\left(t+\sum_{k=1}^{t-1}(t-k)\dfrac{1}{q^{k-1}}{t\choose k}\right)\ge 0.
	\]	
\end{proof}

\begin{rem}
	Combining Theorems~\ref{thm:gen} and~\ref{thm:gen-period}, for $p\ge 3$ and $q\ge D$, if inequality~\eqref{ineq:bound-q} is satisfied at $q=D$ and if $\tau_{\G_1}<\tau_{\G_2}$, then $\rho(\G_1)<\rho(\G_2)$. 
\end{rem}

\section{Escape rate on a subshift of finite type}\label{sec:subshift}
In this section, we extend the results presented in the earlier sections for an irreducible subshift of finite type $\Sigma_{\F}$. We first consider the case when $\F=\{w\}$, and compare escape rates into two holes in $\Sigma_{\F}$ which correspond to the collections of forbidden words $\G_1=\{w_1\}$ and $\G_2=\{w_2\}$, with $\vert w\vert=\vert w_1\vert=\vert w_2\vert=p$. It follows from Theorems~\ref{thm:esc_r_1} and~\ref{thm:esc_r_2} that there exists a positive constant $D$, such that for $p=2$, $q\ge 2$, if $r_{\F\cup\G_1}(2)<r_{\F\cup\G_2}(2)$, and for $p\ge 3, q>3p^2+2$, if $r_{\F\cup\G_1}(3p^2+2)<r_{\F\cup\G_2}(3p^2+2)$, then $\rho(\F\cup\G_1)<\rho(\F\cup\G_2)$. Hence 
\begin{eqnarray*}
	\rho(\G_1;\Sigma_{\F})&=& h_{\text{top}}(\Sigma_{\F}) - h_{\text{top}}(\Sigma_{\F\cup \G_1})=\rho(\F\cup\G_1)-\rho(\F)\\
	&<&\rho(\F\cup\G_2)-\rho(\F)=	\rho(\G_2;\Sigma_{\F}).
\end{eqnarray*}
This can be seen using Tables~\ref{table:1s}, and ~\ref{table:2s}. Interestingly holes with same measure can have different escape rates and holes with different measures can have the same escape rate. For example, in Table~\ref{table:1s}, $q=3$, $\F=\{00\}$, $\G_1=\{w_1=11\}$, $\G_2=\{w_2=01\}$, and $\G_3=\{w_3=12\}$. Here $\mu(C_{w_1})=\mu(C_{w_3})$, but $\rho(\G_1;\Sigma_\F)\ne\rho(\G_3;\Sigma_\F)$. Moreover $\mu(C_{w_1})\ne\mu(C_{w_2})$, but $\rho(\G_1;\Sigma_\F)=\rho(\G_2;\Sigma_\F)$. Recall that $\mu$ denotes the Parry measure on $\Sigma_{\F}$.

\begin{table}[h] 
	\centering
	\caption{Escape rate values for $p=2$ and $q=3,4,\ldots,10$ for a subshift of finite type $\Sigma_\F$. Here $r_{\F\cup\G_i}(z)<r_{\F\cup\G_{i+1}}(z)$ for all real $z>2$.}
	\label{table:1s}
	\begin{footnotesize}
		\begin{tabular}{*3c|*5c} 
			\hline
			$\F$&$\G_1$&$\G_2$&$\F$&$\G_1$&$\G_2$&$\G_3$&$\G_4$\\
			
			$\{aa\}$&$\{ab\}/ \{ba\} /$&$\{bc\}$&$\{ab\}$&$\{aa\}/ \{bb\} /$&$\{ca\}/ \{bc\}$&$\{cc\}$&$\{cd\}/ \{ac\} /$   \\
			&$\{bb\}$&&&$\{ba\}$&&&  $\{cb\}$    \\\hline
			$q$ &$\rho(\G_1;\Sigma_\F)$	&$\rho(\G_2;\Sigma_\F)$&$q$ &$\rho(\G_1;\Sigma_\F)$	&$\rho(\G_2;\Sigma_\F)$ &$\rho(\G_3;\Sigma_\F)$	&$\rho(\G_4;\Sigma_\F)$	\\\hline
			3&0.1237&0.1955&3&0.0810&0.1188&0.1528&0.2693\\
			4&0.0625&0.0826&4&0.0468&0.0609&0.0668&0.0890\\
			5&0.0386&0.0475&5&0.0308&0.0378&0.0398&0.0491\\
			6&0.0264&0.0313&6&0.0220&0.0260&0.0269&0.0318\\
			7&0.0193&0.0222&7&0.0165&0.0191&0.0195&0.0225\\
			8&0.0147&0.0167&8&0.0129&0.0146&0.0149&0.0168\\ 
			9&0.0117&0.0130&9&0.0104&0.0116&0.0117&0.0131\\
			10&0.0095&0.0104&10&0.0085&0.0094&0.0095&0.0105\\\hline
		\end{tabular}
	\end{footnotesize}
\end{table}

\begin{table}[ht!] 
	\caption{Escape rate values for $p=2$ and $q=3,4,\ldots,10$ for a subshift of finite type $\Sigma_\F$. Here $r_{\F\cup\G_i}(z)<r_{\F\cup\G_{i+1}}(z)$ for all real $z>2$.}
	\centering
	\begin{footnotesize}
		\begin{tabular}{*6c} 
			\hline
			$\F=\{aaa\}$ & $\G_1=\{bbb\}/ \{aab\}/$ & $\G_2=\{aba\}$ & $\G_3=\{bcb\}$ & $\G_4=\{abb\}/ \{bba\} /$ & $\G_5=\{bcd\}$ \\
			& $\{baa\}$ & & & $\{abc\} / \{bca\}$ &  \\ \hline
			$q$ &$\rho(\G_1;\Sigma_\F)$	&$\rho(\G_2;\Sigma_\F)$ &$\rho(\G_3;\Sigma_\F)$	&$\rho(\G_4;\Sigma_\F)$	& $\rho(\G_5;\Sigma_\F)$  \\ \hline
			3&  0.0308 &  0.0345 &  0.0414 & 0.0429 & 0.0475\\
			4&  0.0129 &  0.0145 &  0.0162 & 0.0164& 0.0173\\
			5&  0.00675&  0.00756&  0.00809& 0.00815& 0.00843\\
			6&  0.00398&  0.00834&  0.00464& 0.00466&  0.00477\\
			7&  0.00255&  0.00281&  0.00291& 0.00292&   0.00297\\
			8&  0.00173&  0.00189&  0.00194& 0.001952&   0.001988\\
			9&  0.00123&  0.00134&  0.001368& 0.001369&  0.00138\\
			10&0.000907& 0.000979&  0.000997& 0.000998&   0.0010\\\hline
		\end{tabular}
	\end{footnotesize}
	\label{table:2s}
\end{table}

\noindent The next result is a straightforward consequence of the preceding discussion.

\begin{thm}
	Let $\F=\{w=a_1a_2\ldots a_p\}$ and $\mathcal{G}$ be the collection of all words $u$ of length $p$ such that $(w,u)_z=(u,w)_z=0$. Let $a,b\in \Sigma$ be distinct symbols in $\Sigma\setminus \{a_1,a_p\}$. Let $\G_u=\{u\}$, for all $u\in\mathcal{G}$. Then
	\[
	\min_{u\in\mathcal{G}}(\rho(\G_u;\Sigma_\F))=\rho(\G_{u_0};\Sigma_\F), \text{ and } \max_{u\in\mathcal{G}}(\rho(\G_u;\Sigma_\F))=\rho(\G_{u_1};\Sigma_\F),
	\]
	where $u_0=\bar{a}$, and $u_1=a\bar{b}$.
\end{thm}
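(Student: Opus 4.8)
The plan is to reduce this extremal statement to the monotone relationship between the rational function $r(z)$ and the escape rate established earlier, and then to a purely combinatorial comparison of auto-correlation polynomials. First I would observe that for every $u\in\mathcal{G}$ the collection $\F\cup\G_u=\{w,u\}$ consists of two distinct words of length $p$ with vanishing cross-correlations, so Remark~\ref{rem:formF}(2) gives
\[
r_{\F\cup\G_u}(z)=\frac{1}{(w,w)_z}+\frac{1}{(u,u)_z}.
\]
Since the first summand is independent of $u$ and $(u,u)_z>0$ for real $z>1$, for any fixed $z_0>1$ the value $r_{\F\cup\G_u}(z_0)$ is a strictly decreasing function of $(u,u)_{z_0}$. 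Thus maximizing the escape rate amounts to minimizing the auto-correlation polynomial, and minimizing the escape rate amounts to maximizing it.

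The combinatorial core is to bound $(u,u)_z$ over $u\in\mathcal{G}$. Because $(u,u)_z$ is monic of degree $p-1$ with coefficients in $\{0,1\}$, for every $z>0$ one has $z^{p-1}\le (u,u)_z\le \sum_{j=0}^{p-1}z^j$, the lower bound being attained exactly when $u$ has no nontrivial self-overlap (minimal period $p$) and the upper bound exactly when $u$ is a constant word (minimal period $1$). I would then check, using the hypotheses $a,b\in\Sigma\setminus\{a_1,a_p\}$ and $a\neq b$, that $u_0=\bar a$ and $u_1=a\bar b$ both lie in $\mathcal{G}$: each of the four cross-correlations $(w,u_i)_z$ and $(u_i,w)_z$ vanishes because the last letter of one word never agrees with the first letter of the other ($a_p\neq a,b$ and $a_1\neq a,b$), so no suffix of one can coincide with a prefix of the other. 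Finally $u_0$ realizes the maximal value $(u_0,u_0)_z=\sum_{j=0}^{p-1}z^j$ and $u_1$ realizes the minimal value $(u_1,u_1)_z=z^{p-1}$.

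With the evaluation point $z_0$ from the preceding discussion ($z_0=2$ when $p=2$ and $z_0=3p^2+2$ when $p\ge 3$), the two bounds give $r_{\F\cup\G_{u_0}}(z_0)\le r_{\F\cup\G_u}(z_0)\le r_{\F\cup\G_{u_1}}(z_0)$ for every $u\in\mathcal{G}$. When an inequality is strict, Theorems~\ref{thm:esc_r_1} and~\ref{thm:esc_r_2}, together with $\rho(\G_u;\Sigma_\F)=\rho(\F\cup\G_u)-\rho(\F)$, upgrade it to the corresponding strict comparison of escape rates; when equality holds, the auto-correlation polynomials coincide as polynomials, hence $r_{\F\cup\G_u}$ agrees with $r_{\F\cup\G_{u_0}}$ (resp.\ $r_{\F\cup\G_{u_1}}$) as a rational function and the escape rates are equal. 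In either case $\rho(\G_{u_0};\Sigma_\F)\le\rho(\G_u;\Sigma_\F)\le\rho(\G_{u_1};\Sigma_\F)$, which is the assertion.

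The step I expect to require the most care is not the bound on $(u,u)_z$ itself, which is standard, but the verification that the extremizers actually lie in $\mathcal{G}$: the inequalities $z^{p-1}\le(u,u)_z\le\sum_{j=0}^{p-1}z^j$ range over all words, yet the minimizer and maximizer must additionally have zero cross-correlation with $w$, and it is precisely the constraints $a,b\notin\{a_1,a_p\}$, $a\neq b$ that make $\bar a$ and $a\bar b$ admissible while pinning down the maximal and minimal auto-correlations. A secondary point is the careful treatment of the equality cases, since Theorems~\ref{thm:esc_r_1}--\ref{thm:esc_r_2} deliver strict conclusions from strict hypotheses, whereas the minimum and maximum statements need the non-strict comparisons at the boundary.
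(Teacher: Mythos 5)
Your proposal is correct and follows essentially the same route as the paper's proof: reduce to $r_{\F\cup\G_u}(z)=1/(w,w)_z+1/(u,u)_z$ via the vanishing cross-correlations, bound $(u,u)_z$ between $z^{p-1}$ and $\sum_{j=0}^{p-1}z^j$, check that $\bar a$ and $a\bar b$ realize the extremes and lie in $\mathcal{G}$, and invoke the earlier $r$-versus-escape-rate theorems. You supply slightly more detail than the paper (the explicit verification that $u_0,u_1\in\mathcal{G}$ and the treatment of equality cases), but the argument is the same.
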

\begin{proof}
	Let $r(z)$ be the rational function corresponding to the collection $\{w,u\}$, where $u\in\mathcal{G}$. Since $(w,u)_z=(u,w)_z=0$, we get $r(z)=\dfrac{1}{(w,w)_z}+\dfrac{1}{(u,u)_z}$. Note that $r(z)$ is minimum when $(u,u)_z=z^{p-1}+z^{p-2}+\ldots+1$, and maximum when $(u,u)_z=z^{p-1}$. Since $a,b\notin\{a_1,a_p\}$, $u_0,u_1\in\mathcal{G}$, $(u_0,u_0)_z=z^{p-1}+z^{p-2}+\ldots+1$, and $(u_1,u_1)_z=z^{p-1}$. 
\end{proof}

\begin{rem}
	Since the escape rate is invariant under conjugacy, all the results stated in this paper can be applied to maps that are conjugate to a subshift of finite type with a hole corresponding to a union of cylinders.
\end{rem}

The next result extends Theorem~\ref{thm:gen} when the underlying space is an irreducible subshift of finite type $\Sigma_\F$, for some collection $\F$ consisting of words of identical length with symbols from $\Sigma$.

\begin{thm}\label{thm:gen-subshift}
	Consider an irreducible subshift $\Sigma_{\F}$. Suppose $\G_1$ and $\G_2$ are finite collections consisting of $t$ allowed words in $\Sigma_\F$ each of length $p\ge 2$. Then for any $p\ge 3$, there exists a positive constant $D=D(t,p)$ such that for all $q> D$, if $r_{\F\cup\G_1}(x)<r_{\F\cup\G_2}(x)$ for $x=D$, then $\rho(\G_1;\Sigma_\F)<\rho(\G_2;\Sigma_\F)$.
\end{thm}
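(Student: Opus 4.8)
The plan is to reduce the comparison to the full-shift setting already settled by Theorem~\ref{thm:gen} and then to cancel the contribution of the ambient subshift. By Theorem~\ref{thm:esc_rate}, for $i=1,2$,
\[
\rho(\G_i;\Sigma_\F)=h_{\text{top}}(\Sigma_\F)-h_{\text{top}}(\Sigma_{\F\cup\G_i}).
\]
Writing $\rho(\cdot)$ for the escape rate on the full shift $\Sigma^\N$ and using $h_{\text{top}}(\Sigma_\F)=h_{\text{top}}(\Sigma^\N)-\rho(\F)$ together with the analogous identity for $\F\cup\G_i$, one obtains $\rho(\G_i;\Sigma_\F)=\rho(\F\cup\G_i)-\rho(\F)$. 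Since the term $\rho(\F)$ is common to both $i$, it suffices to prove that $\rho(\F\cup\G_1)<\rho(\F\cup\G_2)$ on the full shift.

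First I would observe that $\F\cup\G_i$ is a \emph{reduced} collection of $n:=\vert\F\vert+t$ words of length $p$ (after recoding, as one may, the forbidden words of $\F$ to have length $p$ as well): the words in $\G_i$ are allowed in $\Sigma_\F$, so no word of $\F$ is a subword of a word of $\G_i$ or conversely, and distinct words of equal length cannot contain one another. Hence $\F\cup\G_1$ and $\F\cup\G_2$ are exactly the kind of collections to which Theorem~\ref{thm:gen} applies, with the role of $t$ now played by $n$. Applying that theorem produces a constant $D=D(n,p)$, which I relabel $D(t,p)$ because the ambient $\F$ (and hence $\vert\F\vert$) is a fixed datum of the problem, such that for all $q>D$,
\[
r_{\F\cup\G_1}(D)<r_{\F\cup\G_2}(D)\ \Longrightarrow\ \rho(\F\cup\G_1)<\rho(\F\cup\G_2).
\]
Combined with the identity of the first paragraph, this yields $\rho(\G_1;\Sigma_\F)<\rho(\G_2;\Sigma_\F)$, completing the argument.

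For completeness I would record why the machinery underlying Theorem~\ref{thm:gen} transfers verbatim. By Theorems~\ref{thm:formF} and~\ref{thm:same_roots} and Corollary~\ref{cor:e_rate}, $\rho(\F\cup\G_i)=-\ln(\lambda_i/q)$, where $\lambda_i$ is the largest real root of $(z-q)\D_i(z)+\mathcal{S}_i(z)$ and $r_{\F\cup\G_i}=\mathcal{S}_i/\D_i$ is the sum of entries of the inverse of the $n\times n$ correlation matrix of $\F\cup\G_i$. Here $\D_i$ is monic of degree $n(p-1)$ and $\mathcal{S}_i$ has degree $(n-1)(p-1)$ with leading coefficient $n$, so the Rouché and localization steps of Lemmas~\ref{Lemma:1} and~\ref{Lemma:2} give $q-1<\lambda_i<q$, while the coefficient estimates~\eqref{eq:ubound}--\eqref{eq:ubound2} (read with $n$ in place of $t$) together with the Lagrange bound force the sign of $r_{\F\cup\G_1}-r_{\F\cup\G_2}$ to be constant on $[D,\infty)$; Proposition~\ref{thm:esc_r_x} then converts the strict inequality at $x=D$ into $\lambda_1>\lambda_2$.

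The hard part will not be the algebra but a structural subtlety: deleting the extra words $\G_i$ may render $\Sigma_{\F\cup\G_i}$ reducible even though $\Sigma_\F$ is irreducible, so the identification of $\lambda_i$ with the largest real pole of the generating function must be invoked in the form valid for reducible subshifts possessing at least one nonzero irreducible component (the remark following Theorem~\ref{thm:same_roots}). I would check that such a component exists; this is automatic here, since $q-1<\lambda_i<q$ forces $h_{\text{top}}(\Sigma_{\F\cup\G_i})>0$ for $q>D$. The only remaining point deserving care is that the constant $D$ grows with $n=\vert\F\vert+t$ through~\eqref{eq:ubound}--\eqref{eq:ubound2}, which is harmless precisely because $\F$ is fixed, justifying the notation $D(t,p)$.
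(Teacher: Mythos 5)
Your argument is correct and is essentially the paper's own: the paper derives the identity $\rho(\G_i;\Sigma_\F)=\rho(\F\cup\G_i)-\rho(\F)$ from Theorem~\ref{thm:esc_rate} in the discussion immediately preceding the statement and then treats the theorem as an application of Theorem~\ref{thm:gen} to the enlarged collections $\F\cup\G_i$, exactly as you do. Your additional remarks on reducedness, possible reducibility of $\Sigma_{\F\cup\G_i}$, and the dependence of $D$ on $\vert\F\vert+t$ are sensible elaborations of points the paper leaves implicit.
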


\begin{rem}
	Under the hypothesis of Theorem~\ref{thm:gen-period}, using Theorems~\ref{thm:gen-period} and~\ref{thm:gen-subshift}, for the collection of forbidden words $\F\cup\G_i$, $i=1,2$, for $p\ge 3$ and for $q>D$ satisfying inequality~\eqref{ineq:bound-q}, we obtain $\rho(\G_1;\Sigma_\F)<\rho(\G_2;\Sigma_\F)$.
\end{rem}

\section{Concluding remarks and future directions}\label{sec:conc}
Remark~\ref{rem:lower_q} highlights various situations for small values of $p$ and $q$ which are not covered in Theorems~\ref{thm:esc_r_2} and~\ref{thm:esc_min1}. However, numerical calculations suggest that the bound $q>3p^2+2$ is not the best. One of the reasons for this is that our proofs consider only the fact that the correlation polynomials have a certain degree and have coefficients 0 or 1. These polynomials have more structure to them. For example, if we consider two different words of same length of the types $aa\ldots a$ and $bb\ldots b$, then their autocorrelation polynomials have all their coefficients 1 but all their cross-correlations must be 0. In fact, the upper bound that we obtain for the coefficients of the rational function $r$ is never achieved by any collection of words. This issue will be taken up in our future work.

This study leads to several other interesting problems. As illustrated in Remark~\ref{rem:Markov}, if the underlying subshift is induced with a general Markov measure which is not the Parry measure, then for two holes with the same correlation function, the escape rate may not be the same. This raises the issue of the relationship between the escape rate and the topological entropy, in general. The question of how the escape rate changes with $q$ can also be explored. The numerics presented in the tables suggest that the escape rate decreases with increasing $q$, which is expected since the size of the hole reduces. If this is true in general, we would like to know at what rate the escape rate decays. The numerics presented in the tables also suggest that Theorem~\ref{thm:esc_r_2} holds true for all values of $p\geq 2$ and $q\geq 2$. Our proof is not helpful in this regard. Another interesting question is the dependence of the escape rate on the Poincar\'e recurrence time when the cross-correlation between the forbidden words is non-zero. It is certain from Remark~\ref{rem:cross-nonzero} that it does not depend on the minimal period of the hole. A question of general interest is to explore other factors that influence the escape rate into the hole other than the length, the number of corresponding forbidden words, and the minimum period of the hole. In~\cite{BB}, Bolding and Bunimovich discuss finite time dynamical properties of shift map on a full shift with cylinders as holes. For each $n\ge 1$, they compare the values of $f_w(n)$, the number of words of length $n$ which end with $w$ but do not contain $w$ as subwords in any other place. This issue can be studied in our setup -- subshift of finite type with holes.

\section{Funding}
The research of the first author is supported by the Council of Scientific \& Industrial Research (CSIR), India (File no.~09/1020(0133)/2018-EMR-I), and the second author is supported by Center for Research on Environment and Sustainable Technologies (CREST), IISER Bhopal, CoE funded by the Ministry of Human Resource Development (MHRD), India.

\addcontentsline{toc}{chapter}{References}
\bibliographystyle{plain}   
\renewcommand{\bibname}{References} 
\bibliography{mybib.bib} 

\end{document}